%%%
%%% Equivariant properties of symmetric products
%%%
%%% original version submitted to arXiv on March 6, 2014
%%% original version submitted to JAMS on March 17, 2014
%%%
%%% revised version for JAMS on May 27, 2016
%%% now includes 2010 AMS Math Subj Class
%%% and incorporates comments from referee report
%%%
%%% accepted for publication in JAMS on Jan 26, 2017
%%% uploaded to JAMS production webpage on Jan 27, 2017
%%%

\documentclass{amsart}[12pt]
\usepackage[curve,matrix,arrow,frame,tips]{xy}
\usepackage{mathrsfs}
\usepackage{enumerate}
\usepackage{amssymb}
\usepackage{amsthm}
\usepackage[pdftex, pdfborder={0 0 0}, pdftitle={Equivariant properties of symmetric products},pdfauthor={Stefan Schwede},pdfstartview={FitH}]{hyperref}

\addtolength{\textwidth}{3.7cm}
\addtolength{\oddsidemargin}{-2.2cm}
\addtolength{\evensidemargin}{-2.2cm}
\addtolength{\marginparwidth}{-1.5cm}
\sloppy

\DeclareMathOperator{\colim}{colim}
\DeclareMathOperator{\Hom}{Hom}
\DeclareMathOperator{\sa}{sa}
\DeclareMathOperator{\gl}{gl}
\DeclareMathOperator{\map}{map}
\DeclareMathOperator{\Id}{Id}
\DeclareMathOperator{\ev}{ev}
\DeclareMathOperator{\Rep}{\mathbf{Rep}}
\DeclareMathOperator{\tr}{tr}
\DeclareMathOperator{\res}{res}

\newcommand{\spc}{spc}
\newcommand{\spec}{{\mathcal S}p} 
\newcommand{\mA}{{\mathbb A}}
\newcommand{\mF}{{\mathbb F}}
\newcommand{\mQ}{{\mathbb Q}}
\newcommand{\mR}{{\mathbb R}}
\newcommand{\mS}{{\mathbb S}}
\newcommand{\mZ}{{\mathbb Z}}
\newcommand{\Fc}{{\mathcal F}}
\newcommand{\Uc}{{\mathcal U}}
\newcommand{\bA}{{\mathbf A}}
\newcommand{\bB}{{\mathbf B}}
\newcommand{\bL}{{\mathbf L}}
\newcommand{\bO}{{\mathbf O}}
\newcommand{\bRU}{{\mathbf{RU}}}
\newcommand{\iso}{\cong}
\newcommand{\sm}{\wedge}
\newcommand{\tensor}{\otimes}
\newcommand{\xra}{\xrightarrow}
\newcommand{\bs}{\backslash}
\newcommand{\upi}{{\underline \pi}}
\newcommand{\td}[1]{\langle #1\rangle}
\renewcommand{\to}{\longrightarrow}

\numberwithin{equation}{section}
\newtheorem{theorem}[equation]{Theorem}
\newtheorem{lemma}[equation]{Lemma}
\newtheorem{cor}[equation]{Corollary}
\newtheorem{prop}[equation]{Proposition}

\theoremstyle{definition}
\newtheorem{defn}[equation]{Definition}
\newtheorem{rk}[equation]{Remark}
\newtheorem{eg}[equation]{Example}
\newtheorem{construction}[equation]{Construction}

\begin{document}

\title[Equivariant properties of symmetric products]
{Equivariant properties of symmetric products}

\date{\today; 2010 AMS Math.\ Subj.\ Class.: 55P91}
\author{Stefan Schwede}
\address{Mathematisches Institut, Universit\"at Bonn, Germany}
\email{schwede@math.uni-bonn.de}

\begin{abstract}
The filtration of the infinite symmetric product of spheres by the number 
of factors provides a sequence of spectra between the sphere spectrum 
and the integral Eilenberg-Mac Lane spectrum. This filtration has received 
a lot of attention and the subquotients are interesting stable homotopy types.
While the symmetric product filtration has been a major focus 
of research since the 1980s, essentially nothing was known
when one adds group actions into the picture.
We investigate the equivariant stable homotopy types, for compact Lie groups, 
obtained from this filtration of infinite symmetric products of representation spheres. 
The situation differs from the non-equivariant case, for example the subquotients 
of the filtration are no longer rationally trivial and on the zeroth equivariant 
homotopy groups an interesting filtration of the augmentation ideals 
of the Burnside rings arises. Our method is by global homotopy theory, i.e., 
we study the simultaneous behavior for all compact Lie groups at once.
\end{abstract}

\keywords{symmetric product, compact Lie group, equivariant stable homotopy theory}
\maketitle

\section*{Introduction}

We let $Sp^\infty(X)$ denote the infinite symmetric product of a based space $X$. 
It comes with a filtration by finite symmetric products $Sp^n(X)=X^n/\Sigma_n$. 
We denote by
\[ Sp^n =  \{ Sp^n(S^m)\}_{m\geq 0} \]
the spectrum whose terms are the $n$-th symmetric products of spheres.
A celebrated theorem of
Dold and Thom~\cite{dold-thom} asserts that $Sp^\infty(S^m)$ 
is an Eilenberg-Mac\,Lane space of type $(\mathbb Z,m)$ for $m\geq 1$;
so $Sp^\infty$ is an Eilenberg-Mac\,Lane spectrum for the group $\mathbb Z$. 
The resulting filtration
\[ \mS\ =\ Sp^1\ \subseteq \ Sp^2\ \subseteq\ \dots\ \subseteq 
\ Sp^n\ \subseteq\ \dots \]
of the Eilenberg-Mac Lane spectrum~$Sp^\infty$,
starting with the sphere spectrum~$\mS$, has been much studied.
The subquotient $Sp^n/Sp^{n-1}$ is stably contractible unless
$n$ is a prime power.
If~$n=p^k$ for a prime~$p$, then $Sp^n/Sp^{n-1}$
is $p$-torsion, and its mod-$p$ cohomology 
has been worked out by Nakaoka~\cite{nakaoka-cohomology of symmetric}.
For $p=2$ these subquotient spectra feature in the
work of Mitchell and Priddy on stable splitting 
of classifying spaces~\cite{mitchell-priddy},
and in Kuhn's solution of the Whitehead conjecture~\cite{kuhn:Whitehead}.
Arone and Dwyer relate these spectra to the partition complex, 
the homology of the dual Lie representation
and the Tits building~\cite{arone-dwyer}.

While the symmetric product filtration has been a major focus 
of research since the 1980s, essentially nothing was known
when one adds group actions into the picture.
This paper is about equivariant features of the symmetric product filtration,
for actions of compact Lie groups~$G$.
If~$V$ is a finite dimensional orthogonal $G$-representation, then~$G$ 
acts continuously on the one-point compactification~$S^V$,
and hence on $Sp^n(S^V)$ and~$Sp^{\infty}(S^V)$ by functoriality of
symmetric products.
As~$V$ varies over all such $G$-representations, the $G$-spaces~$Sp^n(S^V)$
form a $G$-spectrum that represents a `genuine' $G$-equivariant
stable homotopy type. For understanding these equivariant homotopy types
it is extremely beneficial not to study one compact Lie group at a time,
but to use the `global' perspective.
Here `global' refers to simultaneous and compatible actions of all compact
Lie groups. Various ways to formalize this idea have been
explored, compare~\cite[Ch.\,II]{lms}, \cite[Sec.~5]{greenlees-may-completion}, 
\cite{bohmann-orthogonal};
we use a different approach via orthogonal spectra.

In Definition~\ref{def-global functor} we introduce the notion of
{\em global functor}, a useful language to describe
the collection of equivariant homotopy groups of an orthogonal spectrum as a whole, 
i.e., when the compact Lie group is allowed to vary.
The category of global functors is a symmetric monoidal abelian category
that takes up the role in global homotopy theory played by abelian groups
in ordinary homotopy theory, or by $G$-Mackey functors
in $G$-equivariant homotopy theory.
As a consequence of Theorem~\ref{thm-Burnside category basis} 
we will see that a global functor is a certain kind of `global Mackey functor' 
that assigns abelian groups to all compact Lie
groups and comes with restriction maps along continuous group homomorphisms
and transfer maps along inclusions of closed subgroups.

In this language, we can then identify the global functor $\upi_0( Sp^n )$
as a quotient of the Burnside ring global functor~$\mA$
by a single relation.
We define an element~$t_n$ in the Burnside ring of the $n$-th symmetric group by
\[ t_n \ = \ n\cdot 1\ - \ \tr_{\Sigma_{n-1}}^{\Sigma_n}(1) 
\ \in \ \mA(\Sigma_n) .\]
As an element in the Grothendieck group of finite $\Sigma_n$-sets,
the class~$t_n$ corresponds to the formal difference of 
a trivial $\Sigma_n$-set with $n$ elements and
the tautological $\Sigma_n$-set $\{1,\dots,n\}$.
Since $t_n$ has zero augmentation, the global subfunctor 
$\langle t_n\rangle$ generated by $t_n$ lies in 
the augmentation ideal global functor~$I$.
The restriction of $t_n$ to the Burnside ring of $\Sigma_{n-1}$ equals $t_{n-1}$,
so we obtain a nested sequence of global functors
\[  
 0 = \langle t_1\rangle \ \subset\ 
\langle t_2\rangle\ \  \subset\ \dots \  \subset\ \langle t_n\rangle \  \subset \ \dots
\   \subset I \ \subset \ \mA\ .  
\]
As part of our main result, Theorem~\ref{thm-pi_0 Sp^n}, 
we prove the following:\medskip

{\bf Theorem.}\
For every $n\geq 1$ the morphism of global functors~$i_*:\mA=\upi_0(\mS)\to\upi_0(Sp^n)$
induced by the embedding $i:\mS= Sp^1\to Sp^n$ passes to an isomorphism 
of global functors
\[ \mA/\td{t_n} \ \iso \ \upi_0(Sp^n) \ .\]  

\medskip

It is then a purely algebraic exercise to describe $\pi^G_0(Sp^n)$ as an
explicit quotient of the Burnside ring~$\mA(G)$:
one has to enumerate all relations in $\mA(G)$ obtained by applying restrictions 
and transfers to the class $t_n$. 
We do this in Proposition~\ref{prop-describe I_n} 
and then work out the examples of~$p$-groups and some symmetric groups.
The author thinks that the explicit answer for $\pi^G_0(Sp^n)$ 
is far less enlightening than the global description of~$\upi_0(Sp^n)$.
Since all the inclusions~$\td{t_{n-1}}\subset\td{t_n}$ are proper,
the subquotients $Sp^n/Sp^{n-1}$ are all globally non-trivial,
in sharp constrast to the non-equivariant situation.

Our calculation of~$\upi_0 (Sp^n)$
is a consequence of a global homotopy pushout square,
see Theorem~\ref{thm-main homotopy}, 
showing that~$Sp^n$ is obtained from~$Sp^{n-1}$ by coning off 
a certain morphism from the suspension spectrum of the
global classifying space of the family of
non-transitive subgroups of $\Sigma_n$.
This homotopy pushout square is a global equivariant refinement of 
a non-equivariant  homotopy pushout established by Lesh~\cite{lesh-filtration}.

Another consequence of our calculations is a possibly unexpected feature of the
equivariant homotopy groups~$\pi_0^G(Sp^\infty)$ when $G$ has positive dimension.
We let $I_\infty$ denote the global subfunctor of~$\mA$ 
generated by all the classes $t_n$ for $n\geq 1$. 
Also in Theorem~\ref{thm-pi_0 Sp^n} we show that
the embedding $\mS\to Sp^\infty$ induces an isomorphism of global functors
\[ \mA/I_\infty \ \cong \ \upi_0 (Sp^\infty) \ .\]  
For every compact Lie group $G$ the restriction map
\[  \res^G_e \ : \ \pi_0^G(Sp^\infty) \ \to \ \pi_0^e(Sp^\infty)\ \iso \ \mZ   \]
to the non-equivariant 0-th homotopy group is a split epimorphism
onto a free abelian group of rank~1.
When the group~$G$ {\em finite}, then this restriction map
is an isomorphism and all $G$-equivariant homotopy groups of
$Sp^\infty$ vanish in dimensions different from~0. 
So through the eyes of finite groups, $Sp^\infty$ is an Eilenberg-Mac\,Lane 
spectrum for the constant global functor~$\underline\mZ$.
This does {\em not}, however, generalize to compact Lie groups
of positive dimension. In that generality, 
the restriction map $\res^G_e$ can have a non-trivial kernel;
equivalently, the value of the global functor $I_\infty$ at some
compact Lie groups is strictly smaller than the augmentation ideal.
We discuss these phenomena in more detail at the end of Section~\ref{sec-examples}.

I would like to thank Markus Hausmann for various helpful suggestions related
to this paper.

\section{Orthogonal spaces}

In this section we recall orthogonal spaces from a global equivariant perspective.
We work in the category~$\mathcal T$ of
{\em compactly generated spaces} in the sense of~\cite{mccord},
i.e., $k$-spaces (also called {\em Kelley spaces})
that satisfy the weak Hausdorff condition.
An {\em inner product space} is a finite dimensional $\mR$-vector space~$V$ 
equipped with a scalar product.
We write $O(V)$ for the orthogonal group of $V$, i.e.,
the Lie group of linear isometries of $V$.
We denote by $\bL$ the category with objects the 
inner product spaces and morphisms the linear isometric embeddings.
This is a topological category as follows: 
if $\varphi:V\to W$ is one linear isometric embedding,
then the action of the orthogonal group $O(W)$, by postcomposition,
induces a bijection
\[ O(W)/O(\varphi^\perp) \ \iso \ \bL(V,W) \ ,\quad 
A\cdot O(\varphi^\perp) \ \longmapsto A\circ \varphi\ ,  \]
where $\varphi^\perp=W-\varphi(V)$ is the orthogonal complement of the image of $\varphi$.
We topologize $\bL(V,W)$ so that this bijection is a homeomorphism, and
this topology is independent of $\varphi$.
So if $n=\dim(V)$, then~$\bL(V,W)$ is homeomorphic to the Stiefel manifold
of orthonormal $n$-frames in~$W$.

\begin{defn} An {\em orthogonal space}
is a continuous functor $Y:\bL\to\mathcal T$ to the category of spaces.
A morphism of orthogonal spaces is a natural transformation.
We denote by $\spc$ the category of orthogonal spaces.
\end{defn}

The systematic use of inner product spaces
to index objects in homotopy theory seems to go back to Boardman's 
thesis~\cite{boardman-thesis}.
The category $\bL$ (or its extension that
also contains countably infinite dimensional inner product spaces)
is denoted $\mathscr I$ by Boardman and Vogt~\cite{boardman-vogt-homotopy everything},
and this notation is also used in~\cite{may-quinn-ray};
other sources use the symbol $\mathcal I$.
Accordingly, orthogonal spaces are sometimes referred to as $\mathscr I$-functors,
$\mathscr I$-spaces or $\mathcal I$-spaces.
Our justification for using yet another name is twofold: on the one hand,
we shift the emphasis away from a focus on non-equivariant homotopy types, 
and towards viewing an
orthogonal space as representing compatible equivariant homotopy types for
all compact Lie groups. Secondly, we want to stress the analogy between
orthogonal spaces and orthogonal spectra, the former being an unstable global
world and the latter a corresponding stable global world.

We let $G$ be a compact Lie group. By a {\em $G$-representation}
we mean an orthogonal $G$-representation, i.e., an inner product space $V$ 
equipped with a continuous $G$-action by linear isometries.
For every orthogonal space $Y$ and every $G$-representation $V$,
the value~$Y(V)$ inherits a $G$-action from~$V$ through the functoriality of $Y$. 
For a $G$-equivariant linear isometric embedding $\varphi:V\to W$
the induced map $Y(\varphi):Y(V)\to Y(W)$ is $G$-equivariant.

Now we discuss the equivariant homotopy set~$\pi_0^G(Y)$ of an orthogonal
space~$Y$; this is an unstable precursor of the 0-th equivariant stable
homotopy group of an orthogonal spectrum. 

\begin{defn} 
Let $G$ be a compact Lie group. 
A {\em $G$-universe} is an orthogonal $G$-representation $\Uc$ 
of countably infinite dimension with the following two properties:
\begin{itemize}
\item the representation $\Uc$ has non-zero $G$-fixed points,
\item if a finite dimensional representation $V$ embeds into $\Uc$,
then a countable infinite sum of copies of~$V$ also embeds into $\Uc$.
\end{itemize}
A $G$-universe is {\em complete} if every finite dimensional $G$-representation
embeds into it.
\end{defn}

A $G$-universe is characterized, up to equivariant isometry,
by the set of irreducible $G$-representations that embed into it.
A universe is complete if and only if 
every irreducible $G$-representation embeds into it.
In the following we fix, for every compact Lie group $G$,
a complete $G$-universe $\Uc_G$.
We let $s(\Uc_G)$ denote the poset, under inclusion,
of finite dimensional $G$-subrepresentations of $\Uc_G$.

We let $Y$ be an orthogonal space and define its
$G$-equivariant path components as
\begin{equation}\label{eq:define_pi_0^G_set}
   \pi_0^G(Y)  \ = \ \colim_{V\in s(\Uc_G)}\,  \pi_0\left( Y(V)^G\right)  \ .
\end{equation}
As the group varies, the homotopy sets $\pi_0^G(Y)$ 
have contravariant functoriality in $G$:
every continuous group homomorphism $\alpha:K\to G$
between compact Lie groups
induces a restriction map $\alpha^*:\pi_0^G(Y)\to\pi_0^K(Y)$,
as we shall now explain.
We denote by $\alpha^*$ the restriction functor from $G$-spaces to $K$-spaces 
(or from $G$-representations to $K$-representations), 
i.e., $\alpha^* Z$ (respectively $\alpha^*V$)
is the same topological space as~$Z$ 
(respectively the same inner product space as~$V$) endowed with $K$-action via
\[  k\cdot z \ = \ \alpha(k)\cdot z \ . \]
Given an orthogonal space $Y$,
we note that for every $G$-representation $V$, the $K$-spaces 
$\alpha^*(Y(V))$ and $Y(\alpha^*V)$ are equal (not just isomorphic).

The restriction $\alpha^*(\Uc_G)$ is a $K$-universe, 
but if $\alpha$ has a non-trivial kernel, then this $K$-universe is 
not complete. When $\alpha$ is injective,
then $\alpha^*(\Uc_G)$ is a complete $K$-universe, but typically
different from the chosen complete $K$-universe $\Uc_K$. 
To deal with this we explain how 
a $G$-fixed point $y\in Y(V)^G$,
for an arbitrary $G$-representation $V$,
gives rise to an unambiguously defined element $\td{y}$ 
in $\pi_0^G(Y)$.
The point here is that $V$ need not be a subrepresentation
of the chosen universe $\Uc_G$
and the resulting class does not depend on any additional choices.
To construct $\td{y}$ we choose a linear isometric $G$-embedding $j:V\to \Uc_G$
and look at the image $Y(j)(y)$ under the $G$-map
\[  Y(V)\ \xra{\ Y(j)\ }\ Y(j(V))\ . \]
Here we have used the letter $j$ to also denote the isometry
$j:V\to j(V)$ to the image of $V$; since $j(V)$ 
is a finite dimensional $G$-invariant subspace of $\Uc_G$, we obtain an element 
\[   \td{y} \ = \ [Y(j)(y)] \ \in \ \pi_0^G(Y)\ .  \]
It is crucial, although not particularly difficult,
that $\td{y}$ does not depend on the choice of embedding $j$:

\begin{prop}\label{prop-universal colimit spaces}
Let $Y$ be an orthogonal space, $G$ a compact Lie group,
$V$ a $G$-representation and $y\in Y(V)^G$ a $G$-fixed point.
\begin{enumerate}[\em (i)]
\item The class $\td{y}$ in $\pi_0^G(Y)$ is independent of the choice 
of linear isometric embedding $j:V\to\Uc_G$. 
\item For every $G$-equivariant linear isometric embedding 
$\varphi:V\to W$ the relation
\[ \td{Y(\varphi)(y)} \ = \ \td{y} \text{\qquad holds in\quad $\pi_0^G(Y)$.}  \]
\end{enumerate}
\end{prop}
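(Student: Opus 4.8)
The plan is to reduce both statements to a single geometric fact: any two $G$-equivariant linear isometric embeddings of $V$ into $\Uc_G$ become homotopic, through $G$-equivariant linear isometric embeddings, once we view them as maps into a suitable common finite-dimensional $G$-subrepresentation of $\Uc_G$. Granting this, part~(i) is almost formal and part~(ii) follows from part~(i).

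For the formal part of (i), I would first record a bookkeeping consequence of functoriality: if $j\colon V\to\Uc_G$ is a linear isometric $G$-embedding and $W\subseteq\Uc_G$ is any finite-dimensional $G$-subrepresentation with $j(V)\subseteq W$, then the structure map of the colimit along $j(V)\subseteq W$ carries $Y(j\colon V\to j(V))(y)$ to $Y(j\colon V\to W)(y)$, where $j\colon V\to W$ denotes the same map with enlarged codomain; hence $\td{y}$ (built from $j$) is the image of $Y(j\colon V\to W)(y)\in\pi_0(Y(W)^G)$ under the canonical map $\pi_0(Y(W)^G)\to\pi_0^G(Y)$. Now suppose $j_0,j_1\colon V\to\Uc_G$ are two embeddings and $h_t\colon V\to\Uc_G$, $t\in[0,1]$, is a homotopy from $j_0$ to $j_1$ through $G$-equivariant linear isometric embeddings. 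Choosing a finite-dimensional $G$-subrepresentation $W\subseteq\Uc_G$ that contains $j_0(V)$, $j_1(V)$ and the image of every $h_t$, the assignment $t\mapsto Y(h_t)(y)$ is a path in $Y(W)^G$ --- continuous because $Y$ is a continuous functor, and valued in $G$-fixed points because each $h_t$ is $G$-equivariant and $y$ is fixed. It connects $Y(j_0\colon V\to W)(y)$ to $Y(j_1\colon V\to W)(y)$, so these agree in $\pi_0(Y(W)^G)$, and therefore $\td{y}$ does not depend on the embedding.

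The remaining, and main, task in (i) is to produce such an $h_t$. Since $\Uc_G$ is complete, every irreducible $G$-representation embeds into it, hence --- by the second universe axiom applied to that irreducible --- with infinite multiplicity; so the orthogonal complement of the finite-dimensional subrepresentation $j_0(V)+j_1(V)$ still admits a $G$-embedding of $V$. Fix one, $j_2\colon V\to\Uc_G$, with image orthogonal to both $j_0(V)$ and $j_1(V)$. For $i\in\{0,1\}$ the formula $t\mapsto\cos(\tfrac{\pi}{2}t)\cdot j_i+\sin(\tfrac{\pi}{2}t)\cdot j_2$ is a homotopy from $j_i$ to $j_2$ through $G$-equivariant linear isometric embeddings into $j_i(V)\oplus j_2(V)$ --- the isometry property uses precisely that $j_i(V)$ and $j_2(V)$ are orthogonal. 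Splicing the path from $j_0$ to $j_2$ with the reverse of the path from $j_1$ to $j_2$ gives $h_t$, with every intermediate embedding mapping into the finite-dimensional $G$-subrepresentation $W:=j_0(V)+j_1(V)+j_2(V)$ of $\Uc_G$; this would complete (i).

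For (ii): given a $G$-equivariant linear isometric embedding $\varphi\colon V\to W$ and any linear isometric $G$-embedding $k\colon W\to\Uc_G$, functoriality of $Y$ gives $Y(k)\big(Y(\varphi)(y)\big)=Y(k\varphi)(y)$. The left side represents $\td{Y(\varphi)(y)}$ computed via $k$, and the right side represents $\td{y}$ computed via the embedding $k\varphi\colon V\to\Uc_G$; by part~(i) the latter is independent of the chosen embedding and so equals $\td{y}$, giving $\td{Y(\varphi)(y)}=\td{y}$. The only step with genuine content is the construction of $h_t$ in (i), and there the point requiring care is locating a copy of $V$ in the orthogonal complement and then rotating; everything else is functoriality and continuity of $Y$.
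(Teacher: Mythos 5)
The proposal is correct and takes essentially the same approach as the paper: reduce to the case of embeddings with orthogonal images, where a linear rotation gives a path through $G$-equivariant isometric embeddings, and handle the general case via a third auxiliary embedding orthogonal to both; part~(ii) is the same one-line functoriality argument. The only differences are cosmetic --- you splice the two homotopies into one and spell out why the auxiliary embedding exists (completeness plus the infinite-multiplicity axiom), a point the paper leaves implicit.
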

\begin{proof}
(i) We let $j':V\to\Uc_G$ be another  $G$-equivariant linear isometric embedding. 
If the images $j(V)$ and $j'(V)$ are orthogonal, 
then~$H:V\times [0,1]\to j(V)\oplus j'(V)$ defined by
\[ H(v,t)\ =\ \sqrt{1-t^2}\ \cdot j(v)  \ + \ t\cdot j'(v) \]
is a homotopy from $j$ to $j'$ through $G$-equivariant linear isometric embeddings.
Thus
\[ t\ \longmapsto \ Y(H(-,t))(y) \]
is a path in $Y(j(V)\oplus j'(V))^G$ from $Y(j)(y)$ to $Y(j')(y)$, so
$[Y(j)(y)]=[Y(j')(y)]$ in $\pi_0^G(Y)$.
In general we can choose a third $G$-equivariant linear isometric embedding 
$l:V\to\Uc_G$ whose image is orthogonal 
to the images of~$j$ and~$j'$. Then $[Y(j)(y)]=[Y(l)(y)]=[Y(j')(y)]$
by the previous paragraph.

(ii) If $j:W\to\Uc_G$ is an equivariant linear isometric embedding,
then so is $j\varphi:V\to\Uc_G$.
Since we can use any equivariant isometric embedding to define 
the class $\td{y}$, we get
\[  \td{Y(\varphi)(y)} \ = \ [Y(j)(Y(\varphi)(y))] \ = \ [Y(j\varphi)(y)] 
\ = \ \td{y} \ . \qedhere \]
\end{proof}

We can now define the {\em restriction map} associated to
a continuous group homomorphism $\alpha:K\to G$ by
\[  \alpha^* \ : \ \pi^G_0(Y) \ \to \ \pi^K_0(Y)  \ ,\quad
[y]\ \longmapsto \ \td{y}\ . \]
This makes sense because every $G$-fixed point of $Y(V)$ is also 
a $K$-fixed point of $\alpha^*(Y(V))=Y(\alpha^* V)$.
For a second continuous group homomorphism $\beta:L\to K$ we have
\[ \beta^*\circ \alpha^* \ = \ (\alpha\beta)^* \ : \ \pi_0^G(Y) \ \to \ \pi_0^L(Y) \ .  \]
Since restriction along the identity homomorphism is the identity,
the collection of equivariant homotopy sets $\pi_0^G(Y)$
becomes a contravariant functor in the group variable.
A key fact is that inner automorphisms act trivially:

\begin{prop}\label{prop-inner automorphism} 
For every orthogonal space $Y$, every compact Lie group $G$ 
and every $g\in G$, the restriction map
$c_g^*:\pi_0^G(Y) \to \pi_0^G(Y)$ along the inner automorphism
\[  c_g \ : \ G \ \to \  G\ , \quad c_g(h)\ =\ g^{-1}h g  \]
is the identity of $\pi_0^G(Y)$.
\end{prop}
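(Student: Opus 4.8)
The plan is to deduce everything from Proposition~\ref{prop-universal colimit spaces}(ii) once one spots the right $G$-equivariant isometry. The key observation is that for any $G$-representation $V$, multiplication by the group element $g$ defines a map
\[ a_g\ :\ c_g^* V\ \to\ V\ ,\qquad a_g(v)\ =\ g\cdot v\ , \]
from the conjugated representation back to $V$ (the target carrying the original $G$-action), and that applying the orthogonal space $Y$ to this isometry recovers exactly the action of $g$ on $Y(V)$.

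Concretely, I would argue as follows. Let $x\in\pi_0^G(Y)$ be an arbitrary class, represented by a $G$-fixed point $y\in Y(V)^G$ for some $V\in s(\Uc_G)$; since we may use the inclusion $V\hookrightarrow\Uc_G$ to compute the class, $x=\td{y}=[y]$. By the definition of the restriction map (applied to $\alpha=c_g$), the image $c_g^*(x)$ is the class $\td{y}$ formed by viewing $y$ as a $G$-fixed point of $c_g^*(Y(V))=Y(c_g^* V)$; and $y$ is indeed fixed for the twisted action, because there $h\cdot y = c_g(h)\cdot y = y$.

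Next I would check that $a_g$ is a $G$-equivariant linear isometric embedding: it is linear, isometric and bijective because $g$ acts on $V$ by a linear isometry, and for $h\in G$ and $v\in c_g^* V$ one has
\[ a_g(h\cdot v)\ =\ g\cdot\bigl( (g^{-1}hg)\cdot v\bigr)\ =\ (hg)\cdot v\ =\ h\cdot(g\cdot v)\ =\ h\cdot a_g(v)\ , \]
where the first occurrence of $h\cdot(-)$ refers to the twisted action on the source. Proposition~\ref{prop-universal colimit spaces}(ii) then applies to $a_g$ and the $G$-fixed point $y\in Y(c_g^* V)^G$ and yields $\td{Y(a_g)(y)}=\td{y}$ in $\pi_0^G(Y)$; here the right-hand side is precisely $c_g^*(x)$ by the previous paragraph. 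Finally, since the $G$-action on the orthogonal space value $Y(V)$ is by definition obtained by applying $Y$ to multiplication by group elements, the map $Y(a_g)\colon Y(V)\to Y(V)$ is the action of $g$, so $Y(a_g)(y)=g\cdot y=y$ as $y$ is $G$-fixed. Hence $c_g^*(x)=\td{Y(a_g)(y)}=\td{y}=[y]=x$.

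The one thing that needs genuine care — and the only real obstacle in an otherwise formal argument — is keeping the three $G$-actions straight: the original action on $V$, the conjugated action on $c_g^* V$, and the induced action on $Y(V)$. The whole point is that the single isometry $a_g$ simultaneously intertwines the first two and, under $Y$, becomes the third; once the conventions are fixed consistently (in particular the direction in which $c_g$ conjugates), no further work is required.
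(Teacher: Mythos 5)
Your argument is correct and is essentially the paper's own proof: both hinge on the observation that left multiplication by $g$ is a $G$-equivariant linear isometry out of $c_g^*V$, and that applying $Y$ to it gives the $g$-action on $Y(V)$, which fixes $y$. The paper feeds this isometry directly into the definition of $\td{y}$ as the embedding into $\Uc_G$, while you route it through Proposition~\ref{prop-universal colimit spaces}(ii); this is only a cosmetic difference.
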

\begin{proof} 
We consider a finite dimensional $G$-subrepresentation $V$ 
of $\Uc_G$ and a $G$-fixed point $y\in Y(V)^G$ that represents an element in~$\pi_0^G(Y)$. 
Then the map $l_g:c_g^*(V)\to \Uc$ given by left multiplication by $g$
is a $G$-equivariant linear isometric embedding.
So 
\[ c_g^*[y]\ = \ [Y(l_g^V)(y)]\ = \ [g\cdot y]\ = \ [y] \ ,\]
by the very definition of the restriction map,
where $l_g^V:c_g^*(V)\to V$. The second equation is the definition
of the $G$-action on $Y(V)$ through the $G$-action on $V$.
The third equation is the hypothesis that $y$ is $G$-fixed.
\end{proof}

We denote by $\Rep$ the category whose objects are the compact Lie groups
and whose morphisms are conjugacy classes of continuous group homomorphisms.
We can summarize the discussion thus far by saying that for every
orthogonal space $Y$ the restriction maps
make the equivariant homotopy sets
$\{\pi_0^G(Y)\}$ into a contravariant functor
\[ \upi_0(Y) \ : \ \Rep \ \to \text{(sets)} \ . \]
In fact, the restriction maps along continuous homomorphisms
give {\em all} natural operations:
As we show in~\cite{schwede-global},
every natural transformation $\pi_0^G \to \pi_0^K$
of set valued functors on the category of orthogonal spaces is of the form
$\alpha^*$ for a unique conjugacy class of continuous group homomorphism $\alpha:K\to G$.

\medskip

If~$V$ is any inner product space, then the evaluation functor sending
an orthogonal space~$Y$ to~$Y(V)$ is represented by the hom functor~$\bL(V,-)$.
Consequently, if~$V$ is a $G$-representation, then the functor
\[  \spc \ \to \ \mathcal T \ , \quad Y \ \longmapsto \ Y(V)^G \]
that sends an orthogonal space $Y$ to the space of $G$-fixed points of $Y(V)$
is represented by an orthogonal space $\bL_{G,V}$,
the {\em free orthogonal space}
generated by $(G,V)$.
The value of~$\bL_{G,V}$ at an inner product space~$W$ is
\[ \bL_{G,V}(W) \ = \ \bL(V,W)/G \ , \]
the orbit space of the right $G$-action on $\bL(V,W)$ by
$(\varphi\cdot g)(v) = \varphi(g\cdot v)$.
Every $G$-fixed point $y\in Y(V)^G$ gives rise to a morphism
$\hat y:\bL_{G,V} \to Y$ of orthogonal spaces, defined at~$W$ as
\[   \hat y(W)\ : \  \bL(V,W) / G \ \to \ Y(W)\ ,\quad 
\varphi \cdot G \ \longmapsto\ Y(\varphi)(y)\ .\]
The morphism $\hat y$ is uniquely determined by the property
$\hat y(V)(\Id_V\cdot G)= y$ in $Y(V)^G$.

We calculate the 0-th equivariant homotopy sets of a free orthogonal space. 
The {\em tautological class}
\begin{equation}\label{eq:tautological_class}
 u_{G,V}\ \in \ \pi_0^G(\bL_{G,V})   
\end{equation}
is the path component of the $G$-fixed point
\[  \Id_V\cdot G \ \in \ (\bL(V,V)/G)^G = (\bL_{G,V}(V))^G\ , \]
the $G$-orbit of the identity of $V$.

\begin{theorem}\label{thm-pi_0 of L_G}
Let $K$ and $G$ be compact Lie groups and $V$ a faithful $G$-representation.
Then the map
\[  \Rep(K,G) \ \to \ \pi_0^K(\bL_{G,V})\ , \quad
[\alpha:K\to G] \ \longmapsto \ \alpha^*(u_{G,V})\]
is bijective. 
\end{theorem}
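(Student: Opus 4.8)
The plan is to construct an explicit inverse to the map $[\alpha]\mapsto\alpha^*(u_{G,V})$, using the Yoneda-type description of $\bL_{G,V}$. Recall that for any orthogonal space $Y$, giving a $G$-fixed point $y\in Y(V)^G$ is the same as giving a morphism $\hat y:\bL_{G,V}\to Y$ with $\hat y(V)(\Id_V\cdot G)=y$, and that this identification is natural. Since $\pi_0^K(-)$ is computed from $K$-fixed points of the values, a class in $\pi_0^K(\bL_{G,V})$ is represented by a $K$-fixed point of $\bL(V,W)/G = \bL_{G,V}(W)$ for some $K$-representation $W$ (which we may take inside $\Uc_K$), i.e.\ by a $K$-orbit-of-maps that is invariant under the $K$-action on $W$. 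The first step is to unwind what such a $K$-fixed point is: a linear isometric embedding $\varphi:V\to W$ such that for each $k\in K$ there is a (necessarily unique, since $V$ is faithful) element $\alpha(k)\in G$ with $k\cdot\varphi = \varphi\circ\alpha(k)^{-1}$, equivalently $\varphi(\alpha(k)v)=k\cdot\varphi(v)$. Faithfulness of $V$ forces $\alpha:K\to G$ to be a homomorphism, and continuity of $\alpha$ follows from continuity of the $K$-action together with the fact that $\varphi$ is an embedding. This produces the candidate map $\pi_0^K(\bL_{G,V})\to\Rep(K,G)$, $[\varphi\cdot G]\mapsto[\alpha]$.

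The second step is to check this is well-defined: changing $\varphi$ within its $G$-orbit replaces $\alpha$ by a $G$-conjugate (hence the same class in $\Rep(K,G)$), and a path of $K$-fixed points in $\bL_{G,V}(W)$, or an enlargement of $W$ inside $\Uc_K$, does not change the conjugacy class of $\alpha$ — here I would use that $\Rep(K,G)$ is discrete (conjugacy classes of homomorphisms between compact Lie groups form a discrete set, since $\Hom(K,G)$ has finitely many components up to conjugacy, or more elementarily that a continuous path of homomorphisms $\alpha_t$ with $\varphi_t\alpha_t = \varphi_t(k\cdot-)$ varying continuously is conjugation-constant). The third step is to verify the two composites are identities. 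Starting from $[\alpha:K\to G]$: by Proposition~\ref{prop-universal colimit spaces} together with the explicit formula for $\hat y$, the class $\alpha^*(u_{G,V})$ is represented by the $K$-fixed point $\Id_V\cdot G\in\bL_{G,V}(\alpha^*V)$, whose associated homomorphism (via the recipe above, with $\varphi=\Id_V$, $W=\alpha^*V$) is exactly $\alpha$; so one composite is the identity. Conversely, given a $K$-fixed point $\varphi\cdot G\in\bL_{G,V}(W)^K$ with associated $\alpha$, the equation $k\cdot\varphi=\varphi\circ\alpha(k)^{-1}$ says precisely that $\varphi:\alpha^*V\to W$ is a $K$-equivariant linear isometric embedding, so by Proposition~\ref{prop-universal colimit spaces}(ii) the class of $\varphi\cdot G$ equals the class of $\Id_V\cdot G$ in $\bL_{G,V}(\alpha^*V)$, which by the previous sentence is $\alpha^*(u_{G,V})$.

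The main obstacle, I expect, is the surjectivity/injectivity bookkeeping packaged in the first two steps — specifically, showing that an \emph{arbitrary} $K$-fixed point in some $\bL(V,W)/G$ (with the orbit, not $\varphi$ itself, fixed) really does come from a single $\varphi$ for which $k\mapsto\alpha(k)$ is a genuine continuous homomorphism, and that homotopic such data give conjugate $\alpha$. Faithfulness of $V$ is doing the essential work: it is what rigidifies the a priori multivalued relation $k\cdot(\varphi\cdot G)=\varphi\cdot G$ into the single-valued $\alpha$, and it is exactly the hypothesis that prevents this map from collapsing information. The continuity of $\alpha$ and the discreteness of $\Rep(K,G)$ are the technical points that need a careful (but short) argument; everything else is formal manipulation of the representing property of $\bL_{G,V}$ and Proposition~\ref{prop-universal colimit spaces}.
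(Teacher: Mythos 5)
Your proposal is correct and follows essentially the same route as the paper: extract $\alpha$ from a $K$-fixed $G$-orbit via faithfulness, check invariance under $G$-translation, stabilization in $W$, and paths (the paper handles the last point by first lifting a path of orbits through the locally trivial projection $\bL(V,W)\to\bL(V,W)/G$ to get your continuously varying $\varphi_t$, then invoking rigidity of conjugacy classes of homomorphisms between compact Lie groups), and finally verify the two composites, which you actually spell out in more detail than the paper does. The only point to fix is the convention $k\cdot\varphi=\varphi\circ\alpha(k)^{-1}$, which would make $\alpha$ an anti-homomorphism; the correct relation, which you also state and which you use in the final step, is $k\cdot\varphi=\varphi\circ\alpha(k)$.
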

\begin{proof}
We construct the inverse explicitly. We consider any element
\[ [\varphi G]\ \in \ \pi_0^K(\bL_{G,V})\ ; \]
here $W\in s(\Uc_K)$, and $\varphi\in\bL(V,W)$ is such that the
orbit $\varphi G\in \bL(V,W)/G$ is $K$-fixed.
Thus $k\varphi G=\varphi G$ for every element $k\in K$.
Since $G$ acts faithfully on~$V$, there is a unique $\alpha(k)\in G$
with $k\varphi =\varphi \alpha(k)$, and this defines a 
continuous homomorphism $\alpha:K\to G$.
If we replace~$\varphi$ by~$\varphi g$ for some $g\in G$, then
$\alpha$ changes into its $g$-conjugate.
If we replace~$W$ by a larger $K$-representation in the poset~$s(\Uc_K)$,
then~$\alpha$ does not change.

Now we consider a path
\[ \omega \ : \ [0,1]\ \to \  (\bL(V,W)/G)^K \]
starting with~$\varphi G$. Since the projection
$\bL(V,W)\to\bL(V,W)/G$ is a locally trivial fiber bundle, we can choose
a continuous lift
\[  \tilde\omega \ : \ [0,1]\ \to \ \bL(V,W) \]
with $\tilde\omega(0)=\varphi$ and $\tilde\omega(t)G =\omega(t)$
for all $t\in[0,1]$.
Then each~$t$ determines a continuous homomorphism $\alpha_t:K\to G$
by $k\tilde\omega(t) =\tilde\omega(t)\alpha_t(k)$, and the assignment
\[ [0,1]\ \to \ \Hom(K,G)\ , \quad t \longmapsto \alpha_t \]
to the space of continuous group homomorphisms 
(with the topology of uniform convergence) is itself continuous.
But that means that~$\alpha_0$ and~$\alpha_1$ are conjugate by
an element of~$G$, compare~\cite[VIII, Lemma 38.1]{conner-floyd}.
In particular, the conjugacy class of~$\alpha$ only depends 
on the path component of $\varphi G$ in the space $(\bL(V,W)/G)^K$. 
Altogether this shows that the map
\[ \pi_0^K(\bL_{G,V})\ \to \ \Rep(K,G) \ , \quad [\varphi G]\ \longmapsto \ [\alpha] \]
is well-defined.
It is straightforward from the definitions that this map is inverse 
to evaluation at~$u_{G,V}$.
\end{proof}

We end this section by discussing certain orthogonal spaces
that are closely related to the symmetric product filtration.

\begin{construction}
For an inner product space $V$ we set
\[ S(V,n)\ = \ \left\{ (v_1,\dots,v_n)\in V^n \ : \ \sum_{i=1}^n v_i=0\ , \ \sum_{i=1}^n |v_i|^2 = 1 
\right\} \ .\]
In other words, $S(V,n)$ is the unit sphere in the kernel of summation map
from $V^n$ to~$V$.
The symmetric group~$\Sigma_n$ acts from the right on $S(V,n)$ 
by permuting the coordinates, i.e.,
\[ (v_1,\dots,v_n)\cdot\sigma \ = \ (v_{\sigma(1)},\dots,v_{\sigma(n)}) \ .  \]
We define
\[ (B_{\gl}\Fc_n)(V) \ = \ S(V,n) / \Sigma_n \ ,\]
the orbit space of the $\Sigma_n$-action.
A linear isometric embedding $\varphi:V\to W$ induces the map
\[ (B_{\gl}\Fc_n)(\varphi) \ = \ S(\varphi,n) / \Sigma_n  \ , \quad
(v_1,\dots,v_n)\Sigma_n\ \longmapsto \ (\varphi(v_1),\dots,\varphi(v_n))\Sigma_n\ .\]
We call~$B_{\gl}\Fc_n$ the {\em global classifying space} of the family $\Fc_n$
of non-transitive subgroups of the symmetric group~$\Sigma_n$. 
Proposition~\ref{prop-universal space} below justifies this terminology.
\end{construction}

 \begin{rk}
   The {\em reduced natural $\Sigma_n$-representation} 
   (also called the {\em standard $\Sigma_n$-representation})
   is the vector space 
   \[ \nu_n\ = \ \{(x_1,\dots,x_n)\in\mR^n \ : \ x_1+\ldots + x_n = 0 \} \]
   with the standard scalar product
   and left~$\Sigma_n$-action by permutation of coordinates:
   \[ \sigma\cdot(x_1,\dots,x_n) \ = \ (x_{\sigma^{-1}(1)},\dots,x_{\sigma^{-1}(n)}) \ .\]
   In the proof of the following proposition we exploit that 
   for every inner product space~$V$ (possibly infinite dimensional),
   the kernel of the summation map $V^n\to V$
   is isometrically and $(O(V)\times\Sigma_n)$-equivariantly isomorphic
   to $V\tensor\nu_n$.
   Hence $S(V,n)$ is $(O(V)\times\Sigma_n)$-equivariantly homeomorphic
   to~$S(V\tensor\nu_n)$.
 \end{rk}

We show now that for every compact Lie group $K$
the $K$-space $(B_{\gl}\Fc_n)(\Uc_K)=S(\Uc_K,n)/\Sigma_n$
is a certain classifying space,
thereby justifying the term `global classifying space' for~$B_{\gl}\Fc_n$.
We denote by $\Fc_n(K)$ the family 
of those closed subgroups $\Gamma$ of $K\times\Sigma_n$
whose trace $H=\{\sigma\in \Sigma_n\ |\ (1,\sigma)\in \Gamma\}$
is a non-transitive subgroup of~$\Sigma_n$.
For the purpose of the next proposition we combine the left~$K$-action
and the right $\Sigma_n$-action on $S(\Uc_K,n)$ into a left action
of~$K\times\Sigma$ by
\begin{equation}\label{eq:turn_into_left}
 (k,\sigma)\cdot (v_1,\dots,v_n) \ = \ 
(k\cdot  v_{\sigma^{-1}(1)},\dots,k\cdot v_{\sigma^{-1}(n)}) \ .  
\end{equation}

\begin{prop}\label{prop-universal space} 
Let $K$ be a compact Lie group and $n\geq 2$.
Then the $(K\times\Sigma_n)$-space $S(\Uc_K,n)$
is a universal space for the family $\Fc_n(K)$
of subgroups of $K\times\Sigma_n$.
\end{prop}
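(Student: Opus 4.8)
The plan is to verify directly the defining properties of a universal space for the family $\Fc_n(K)$ of subgroups of $K\times\Sigma_n$. By the Remark above, applied to $V=\Uc_K$ with the $O(\Uc_K)$-action restricted along the homomorphism $K\to O(\Uc_K)$, the $(K\times\Sigma_n)$-space $S(\Uc_K,n)$ is equivariantly homeomorphic to the unit sphere $S(\Uc_K\tensor\nu_n)$ of the orthogonal $(K\times\Sigma_n)$-representation $\Uc_K\tensor\nu_n$. Since $\Uc_K$ is a complete $K$-universe, this representation is countably infinite dimensional, it is the increasing union of its finite dimensional $(K\times\Sigma_n)$-subrepresentations, and every such subrepresentation occurs in it with infinite multiplicity. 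Consequently $S(\Uc_K,n)$ is the increasing union of the unit spheres of finite dimensional $(K\times\Sigma_n)$-subrepresentations; each of these is a compact smooth $(K\times\Sigma_n)$-manifold, so by equivariant triangulation we may equip them with compatible finite $(K\times\Sigma_n)$-CW structures and conclude that $S(\Uc_K,n)$ is a $(K\times\Sigma_n)$-CW complex. It then remains to show, for every closed subgroup $\Gamma\leq K\times\Sigma_n$, that the fixed point space $S(\Uc_K,n)^\Gamma=S\big((\Uc_K\tensor\nu_n)^\Gamma\big)$ is contractible when $\Gamma\in\Fc_n(K)$ and empty otherwise.

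Because every finite dimensional $(K\times\Sigma_n)$-subrepresentation of $\Uc_K\tensor\nu_n$ embeds into it with infinite multiplicity, the fixed subspace $(\Uc_K\tensor\nu_n)^\Gamma$ is either zero --- in which case its unit sphere is empty --- or infinite dimensional, in which case its unit sphere is contractible. So the entire proposition reduces to the algebraic assertion that $(\Uc_K\tensor\nu_n)^\Gamma\neq 0$ if and only if the trace $H=\{\sigma\in\Sigma_n\,:\,(1,\sigma)\in\Gamma\}$ is a non-transitive subgroup of $\Sigma_n$. I would approach this by identifying $\Uc_K\tensor\mR^n=\Uc_K\tensor(\mR\oplus\nu_n)$ with the $(K\times\Sigma_n)$-representation of all functions $\{1,\dots,n\}\to\Uc_K$, on which $(k,\sigma)$ acts by sending $f$ to $i\mapsto k\cdot f(\sigma^{-1}(i))$. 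The summation map $\Uc_K\tensor\mR^n\to\Uc_K$ and the diagonal $\Uc_K\to\Uc_K\tensor\mR^n$ are both $(K\times\Sigma_n)$-equivariant with composite multiplication by $n$, so after taking $\Gamma$-fixed points they split $(\Uc_K\tensor\mR^n)^\Gamma$ as $\Uc_K^{p(\Gamma)}\oplus(\Uc_K\tensor\nu_n)^\Gamma$, where $p\colon\Gamma\to K$ is the projection and the first summand is exactly the space of constant $\Gamma$-fixed functions. Hence $(\Uc_K\tensor\nu_n)^\Gamma\neq 0$ if and only if there exists a \emph{non-constant} $\Gamma$-fixed function $\{1,\dots,n\}\to\Uc_K$, and this is the statement I would verify.

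If $H$ is transitive, a $\Gamma$-fixed function is in particular fixed by the trace $H$, hence constant, so no non-constant one exists and $(\Uc_K\tensor\nu_n)^\Gamma=0$. Conversely suppose $H$ is non-transitive. If the projection $P=p_{\Sigma_n}(\Gamma)$ is already non-transitive on $\{1,\dots,n\}$, pick a $P$-orbit $\emptyset\neq B\subsetneq\{1,\dots,n\}$ and a non-zero vector $v\in\Uc_K^{p(\Gamma)}$ (such $v$ exists because a complete universe has non-zero fixed points); the function equal to $v$ on $B$ and to $0$ off $B$ is $\Gamma$-fixed and non-constant. If instead $P$ acts transitively on $\{1,\dots,n\}$, then $P$ normalizes $H$ and therefore permutes the $H$-orbits transitively; since $H$ is non-transitive there are at least two $H$-orbits, so for every point $j$ the stabilizer $\operatorname{Stab}_P(j)$ is contained in the proper subgroup $\operatorname{Stab}_P(H\cdot j)$ of $P$, where $H\cdot j$ denotes the $H$-orbit of $j$. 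Unwinding the Goursat description of $\Gamma$ then shows that the point stabilizer $\operatorname{Stab}_\Gamma(j)$ projects to a closed subgroup $K_j$ of $K$ strictly contained in $p(\Gamma)$. Since $\Uc_K$ restricts to a complete $p(\Gamma)$-universe, the proper inclusion $K_j\subset p(\Gamma)$ forces (by Frobenius reciprocity) that $\Uc_K^{K_j}\supsetneq\Uc_K^{p(\Gamma)}$, so there is a vector fixed by $K_j$ but not by $p(\Gamma)$; because $\Gamma$ acts transitively on $\{1,\dots,n\}$, the unique $\Gamma$-fixed function taking that value at $j$ is non-constant. In all cases we obtain a non-constant $\Gamma$-fixed function, so $(\Uc_K\tensor\nu_n)^\Gamma\neq 0$, completing the identification of the detected family with $\Fc_n(K)$.

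The steps I regard as routine are the $(K\times\Sigma_n)$-CW structure on the sphere and the bookkeeping that turns $\Gamma$-fixed functions into vectors fixed by point stabilizers. The step I expect to be the main obstacle is the sub-case in which $P$ acts transitively on $\{1,\dots,n\}$ while the trace $H$ does not: there one must extract the proper containment $K_j\subsetneq p(\Gamma)$ from the block structure of the $P$-action on $\{1,\dots,n\}$ and then feed it into the completeness of the universe to manufacture an actual non-constant invariant.
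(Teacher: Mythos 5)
Your proof is correct, and in the crucial direction --- contractibility of $S(\Uc_K,n)^\Gamma$ when the trace $H$ is non-transitive --- it takes a genuinely different route from the paper's. Both arguments start identically: identify $S(\Uc_K,n)$ with $S(\Uc_K\tensor\nu_n)$ and dispose of the transitive-trace case by noting that $H$-fixed tuples are diagonal, hence zero on the sum-zero subspace. For the other direction the paper writes $\Gamma$ as the graph of a homomorphism $\beta:L\to W_{\Sigma_n}H$ with $L=p(\Gamma)$, identifies $(\Uc_K\tensor\nu_n)^\Gamma$ with $(\Uc_K\tensor\beta^*(\nu_n^H))^L$, and then invokes the fact that tensoring a complete $L$-universe with a non-zero $L$-representation again yields a complete $L$-universe, whose fixed points are automatically an infinite-dimensional sphere. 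You instead reduce (via the splitting of $\Gamma$-fixed functions $\{1,\dots,n\}\to\Uc_K$ into constant plus sum-zero parts, and via infinite multiplicity upgrading ``non-zero'' to ``infinite-dimensional'') to exhibiting a single non-constant $\Gamma$-fixed function, and you build one by hand with a case split on whether the image $P$ of $\Gamma$ in $\Sigma_n$ is transitive: an indicator function of a proper $P$-orbit in the easy sub-case, and in the hard sub-case a function induced from a vector fixed by $K_j=p(\operatorname{Stab}_\Gamma(j))$ but not by $L$ --- properness of $K_j$ in $L$ coming from the transitive $L$-action on the set of at least two $H$-orbits, and the existence of such a vector from Frobenius reciprocity for $C(L/K_j)$ inside the complete $L$-universe. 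Your route is more combinatorial and avoids the lemma about tensor products of universes, at the price of the case division and the stabilizer bookkeeping; the paper's argument is shorter and uniform. You also supply the $(K\times\Sigma_n)$-CW structure, a point the paper passes over in silence.
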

\begin{proof}
We let~$\Gamma$ be a closed subgroup of $K\times\Sigma_n$.
If the trace $H=\{\sigma\in\Sigma_n\ |\ (1,\sigma)\in \Gamma\}$
is a transitive subgroup of $\Sigma_n$, then all
$H$-fixed points of $S(\Uc_K,n)$ are diagonal, i.e.,
of the form $(v,\dots,v)$ for some $v\in \Uc_K$. Since the components
must add up to~0, this forces $v=0$, which cannot happen for tuples
in the unit sphere. So if $H$ is a transitive subgroup,
then $S(\Uc_K,n)$ has no $H$-fixed points, hence no $\Gamma$-fixed points either.

Now we suppose that the trace~$H$ is not transitive.
We view the subgroup $\Gamma\leq K\times\Sigma_n$ as a generalized graph:
we denote by~$L$ the image of~$\Gamma$
under the projection $K\times\Sigma_n\to K$ and define a group homomorphism
$\beta:L\to W_{\Sigma_n}H$ to the Weyl group of the trace $H$ by
\[ \beta(l) \ = \ 
\{ \sigma\in \Sigma_n\ |\ (l,\sigma)\in \Gamma\} \ \in \ W_{\Sigma_n} H\ .\]
We can recover~$\Gamma$ as the graph of~$\beta$, i.e.,
\[ \Gamma \ = \ {\bigcup}_{l\in L}  \{l\}\times \beta(l) \ .\]
We let~$L$ act on $(\nu_n)^H$ by restriction along~$\beta$;
then $\beta^*((\nu_n)^H)$ is a non-zero $L$-representation
because $H$ is non-transitive.
Since $\Uc_K$ is a complete $K$-universe, the underlying $L$-universe 
is also complete, hence so is the $L$-universe 
$\Uc_K\tensor \beta^*((\nu_n)^H)$. So
\[   (S(\Uc_K\tensor\nu_n))^\Gamma \ = \ S((\Uc_K\tensor\beta^*((\nu_n)^H))^L)  \]
is an infinite dimensional  unit sphere, hence contractible.
\end{proof}

We define a specific class in the equivariant homotopy set
$\pi_0^{\Sigma_n}(B_{\gl}\Fc_n)$. We set 
\[ b\ =\ (1/n,\dots,1/n)\ \in\ \mR^n \]
and let $e_i$ be the $i$-th vector of the canonical basis of~$\mR^n$. 
Then $b-e_i$ lies in the reduced natural~$\Sigma_n$-representation~$\nu_n$.
Because~$|b-e_i|^2=\frac{n-1}{n}$, the vector
\[ D_n \ = \  \frac{1}{\sqrt{n-1}}(b- e_1,\dots,b-e_n) \ \in \ (\nu_n)^n \]
lies in the unit sphere $S(\nu_n,n)$.
The~$\Sigma_n$-orbit of $D_n$ 
(with respect to the right action permuting the `outer' coordinates)
is $\Sigma_n$-fixed (with respect to the left action permuting the `inner' coordinates),
i.e.,
\[ D_n\cdot\Sigma_n\ \in \ 
( S(\nu_n,n)/\Sigma_n)^{\Sigma_n} \ = \ 
((B_{\gl}\Fc_n)(\nu_n))^{\Sigma_n} \ .\]
We denote by 
\begin{equation}\label{eq:define_u_n}
 u_n\ = \ \td{D_n\cdot \Sigma_n} \ \in\ \pi_0^{\Sigma_n} (B_{\gl}\Fc_n)  
\end{equation}
the class represented by this $\Sigma_n$-fixed point.
The next theorem says that the class $u_n$ generates
the homotopy set Rep-functor $\upi_0 (B_{\gl}\Fc_n)$.

\begin{theorem}\label{thm-u_n generates T_n} 
For every $n\geq 2$, every compact Lie group $K$ and every element 
$x$ of $\pi_0^K(B_{\gl}\Fc_n)$ there is a continuous group 
homomorphism $\alpha:K\to\Sigma_n$ such that $\alpha^*(u_n)=x$.  
\end{theorem}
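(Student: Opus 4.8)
The plan is to reduce the statement to a fixed-point calculation and then invoke Proposition~\ref{prop-universal space}. An element $x \in \pi_0^K(B_{\gl}\Fc_n)$ is represented, for some $W \in s(\Uc_K)$, by a $K$-fixed point of $(B_{\gl}\Fc_n)(W) = S(W,n)/\Sigma_n$. Since $S(W,n)$ is $(O(W)\times\Sigma_n)$-equivariantly homeomorphic to the sphere $S(W\tensor\nu_n)$, and since $W$ embeds $K$-equivariantly into $\Uc_K$, we may replace $W$ by $\Uc_K$ and view $x$ as the class of a $K$-fixed point $\bar{s}$ of $S(\Uc_K,n)/\Sigma_n$. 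The first step is to lift $\bar{s}$ to an honest point $s \in S(\Uc_K,n)$: because the quotient map $S(\Uc_K,n) \to S(\Uc_K,n)/\Sigma_n$ is a covering space (the $\Sigma_n$-action being free away from diagonals, but in fact one should note $\Sigma_n$ acts freely on $S(\Uc_K,n)$ only after checking no nonzero tuple with repeated entries summing to zero can be $\Sigma_n$-fixed in the relevant way --- more carefully, the stabilizer of $s$ in $\Sigma_n$ can be nontrivial, so the map is merely a quotient by a finite group), the $K$-fixedness of $\bar{s}$ says precisely that the $K$-orbit of any lift $s$ stays within the $\Sigma_n$-orbit of $s$.

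The second step extracts the homomorphism. Fix a lift $s = (v_1,\dots,v_n) \in S(\Uc_K,n)$ of $\bar{s}$. For each $k \in K$, $K$-fixedness of $\bar{s}$ gives some $\sigma \in \Sigma_n$ with $k\cdot s = s\cdot\sigma$; the set of such $\sigma$ is a coset of the stabilizer $\Gamma_s = \{\tau \in \Sigma_n : s\cdot\tau = s\}$. This produces a subgroup $\Gamma \leq K\times\Sigma_n$, namely $\Gamma = \{(k,\sigma) : k\cdot s = s\cdot\sigma\}$, which is exactly the stabilizer of $s$ under the left $(K\times\Sigma_n)$-action of \eqref{eq:turn_into_left}. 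Its trace $H = \{\sigma : (1,\sigma)\in\Gamma\} = \Gamma_s$ is a non-transitive subgroup of $\Sigma_n$ --- otherwise $s$ would be diagonal, hence zero, contradicting $s \in S(\Uc_K,n)$. So $\Gamma \in \Fc_n(K)$, and by Proposition~\ref{prop-universal space} the space $S(\Uc_K,n)$ has contractible $\Gamma$-fixed points; in particular $s$ lies in the same $K$-path component of $S(\Uc_K,n)/\Sigma_n$ as any other point whose stabilizer contains $\Gamma$.

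The third step is to produce a \emph{convenient} representative with the same stabilizer data and with the homomorphism landing in $\Sigma_n$. Write $\Gamma$ as the generalized graph of $\beta \colon L \to W_{\Sigma_n}H$ as in the proof of Proposition~\ref{prop-universal space}, where $L \leq K$ is the image of $\Gamma$ under projection. The point $D_n \in S(\nu_n,n)$ defining $u_n$ has stabilizer (under the $\Sigma_n$-action permuting outer coordinates) exactly the diagonal-type subgroup, and one checks directly that the class $u_n$ is, via Theorem~\ref{thm-pi_0 of L_G} applied to the free orthogonal space on $(\Sigma_n,\nu_n)$, detected by the identity homomorphism; concretely $B_{\gl}\Fc_n$ receives a map from $\bL_{\Sigma_n,\nu_n}$ and $u_n$ is the image of the tautological class. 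Then for the homomorphism $\alpha \colon K \to \Sigma_n$ one takes a splitting or a lift of $\beta$ through the quotient $N_{\Sigma_n}(H) \to W_{\Sigma_n}H$ --- this is where I expect the main obstacle: $\beta$ only takes values in the \emph{Weyl group}, and one must upgrade it to an actual homomorphism into $\Sigma_n$ (up to conjugacy) so that $\alpha^*(u_n)$ lands in the correct path component. The clean way around this is to use that $B_{\gl}\Fc_n$ is globally equivalent to the classifying space of the family $\Fc_n$, so $\pi_0^K(B_{\gl}\Fc_n)$ classifies $\Fc_n(K)$-subgroups of $K\times\Sigma_n$ up to conjugacy, and every such $\Gamma$ with non-transitive trace is conjugate into the graph of a genuine homomorphism $K\to\Sigma_n$ (since $\Fc_n$ is the family of non-transitive subgroups, one can conjugate $H$ to sit inside a Young subgroup $\Sigma_{n_1}\times\cdots$ and then $\beta$ lifts). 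Matching $\alpha^*(u_n)$ with $x$ then amounts to checking the two classes have conjugate stabilizers in $K\times\Sigma_n$, which follows from the construction, together with the fact --- already used in the proof of Proposition~\ref{prop-universal space} --- that two points of a universal space for a family with the same isotropy lie in the same path component of the relevant fixed-point space.
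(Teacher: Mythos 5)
Your first two steps (representing $x$ by a $K$-fixed $\Sigma_n$-orbit, extracting the stabilizer $\Gamma\leq K\times\Sigma_n$ with non-transitive trace $H$, and the homomorphism $\beta:K\to W_{\Sigma_n}H$) match the paper's proof. You also correctly identify the main obstacle: $\beta$ lands in the Weyl group of $H$, and one must upgrade it to an honest homomorphism $\alpha:K\to\Sigma_n$ with $\alpha(k)\in\beta(k)$. But your resolution of that obstacle does not work, and the idea that actually resolves it is missing. The paper's point is that $H$, being the $\Sigma_n$-stabilizer of a tuple $(v_1,\dots,v_n)\in V^n$, is a \emph{Young subgroup} (the product of the symmetric groups on the level sets of $i\mapsto v_i$); for Young subgroups the projection $N_{\Sigma_n}H\to W_{\Sigma_n}H$ admits a multiplicative section, and composing $\beta$ with that section gives $\alpha$. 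Without this observation there is no reason a lift of $\beta$ exists, and your substitute claims are false. First, a subgroup $\Gamma$ with nontrivial trace $H$ cannot be ``conjugate into the graph of a genuine homomorphism'': conjugation in $K\times\Sigma_n$ carries the trace to a conjugate subgroup, while any subgroup of a graph has trivial trace. Second, $\pi_0^K(B_{\gl}\Fc_n)$ does \emph{not} classify $\Fc_n(K)$-subgroups up to conjugacy; Remark~\ref{rk-BF_2 is BSigma_2} explicitly notes that for $n\geq 3$ the homomorphism $\alpha$ is not unique up to conjugacy and $\upi_0(B_{\gl}\Fc_n)$ is not representable, so non-conjugate stabilizer data can represent the same class.

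Your final matching step is also off: the stabilizer of $v$ contains $\{1\}\times H$ while the stabilizer of the transported point $S(\varphi,n)(D_n)$ is the graph of $\alpha$ (trivial trace), so the two stabilizers are in general \emph{not} conjugate, and ``same isotropy implies same component'' is not the statement you need. What the paper uses is weaker and correct: both points are fixed by the single subgroup $\Gamma=\operatorname{graph}(\alpha)\in\Fc_n(K)$, and by Proposition~\ref{prop-universal space} the $\Gamma$-fixed points of $S(V,n)$ are path-connected after enlarging $V$; any path between the two fixed points then projects to a path of $K$-fixed points in the orbit space. So the skeleton of your argument is right, but the Young-subgroup splitting is the essential missing ingredient, and the classification statement you lean on instead is not available.
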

\begin{proof}
An element of $\pi_0^K(B_{\gl}\Fc_n)$ is represented by 
a $K$-representation~$V$ in~$s(\Uc_K)$ and a $K$-fixed  $\Sigma_n$-orbit 
\[ v\cdot \Sigma_n\ \in \   S(V,n)/\Sigma_n \ = \ (B_{\gl}\Fc_n)(V)\ .  \]
We let~$H$ denote the $\Sigma_n$-stabilizer of~$v$,
a non-transitive subgroup of $\Sigma_n$.
We define a continuous homomorphism 
$\beta:K\to W_{\Sigma_n}H$ to the Weyl group of $H$ by 
\[ \beta(k) \ = \ \{ \sigma\in \Sigma_n\ |\ k v = v \sigma\} \ .\]
As the $\Sigma_n$-stabilizer of a point in $V^n$, the group~$H$ is
a Young subgroup of~$\Sigma_n$, i.e., the product of the symmetric groups
of all the orbits of the tautological $H$-action on $\{1,\dots,n\}$.
Thus the projection $q:N_{\Sigma_n} H\to W_{\Sigma_n} H$ has a multiplicative section
$s:W_{\Sigma_n} H\to N_{\Sigma_n} H$.
We define $\alpha:K\to\Sigma_n$ as the composite homomorphism
\[ K \ \xra{\ \beta\ }\ W_{\Sigma_n}H  \ \xra{\ s\ }\  
N_{\Sigma_n} H \ \xra{\text{incl}}\  \Sigma_n  \]
and claim that
\[  \alpha^*(u_n) \ = \ [v\cdot\Sigma_n] \text{\qquad in \quad} 
\pi_0^K(B_{\gl}\Fc_n)\ .   \]

We turn $S(V,n)$ into a left~$(K\times\Sigma_n)$-space as in~\eqref{eq:turn_into_left}
and let~$\Gamma\leq K\times\Sigma_n$ denote the graph of~$\alpha$. 
Since $\alpha(k)\in\beta(k)$ for every $k\in K$, the vector~$v$ is fixed
by~$\Gamma$.
Increasing the $K$-representation $V$ does not change the
stabilizer group of the vector $v$ nor the class represented by the orbit
$v\cdot\Sigma_n$ in $\pi_0^K(B_{\gl}\Fc_n)$; 
we can thus assume without loss of generality
that there is a $K$-equivariant linear isometric embedding 
$\varphi:\alpha^*(\nu_n)\to V$.
As the $K$-representations $V$ exhaust a complete $K$-universe, the 
$(K\times\Sigma_n)$-spaces $S(V,n)$ approximate a universal
space for the family $\Fc_n(K)$, by Proposition~\ref{prop-universal space}.
The graph~$\Gamma$ of~$\alpha$ belongs to $\Fc_n(K)$,
so after increasing the $K$-representation $V$, if necessary,
we can assume that the dimension of the fixed point sphere
$S(V,n)^\Gamma$ is at least~1, so that this fixed point space is path connected.
The class $\alpha^*(u_n)$ is represented by the $\Sigma_n$-orbit
of the point
\[ S(\varphi,n)(D_n) \ \in \ S(V,n)^\Gamma \]
and the original class in $\pi_0^K(B_{\gl}\Fc_n)$
is represented by the vector~$v$.
Any path between~$S(\varphi,n)(D_n)$ and~$v$ 
in the fixed point space $S(V,n)^\Gamma$
projects to a path of $K$-fixed points between the orbits
\[  S(\varphi,n)(D_n)\cdot \Sigma_n \ , \quad v\cdot \Sigma_n \quad \in \ 
(S(V,n) / \Sigma_n)^K \ = \ \left((B_{\gl}\Fc_n)(V)\right)^K\ .  \]
This proves that $\alpha^*(u_n) =[v\cdot\Sigma_n]$, and it finishes the proof.
\end{proof}

\begin{rk}\label{rk-BF_2 is BSigma_2}
The orthogonal space~$B_{\gl}\Fc_2$ is isomorphic
to the free orthogonal space generated by $(\Sigma_2,\sigma)$,
where $\sigma$ is the 1-dimensional sign representation of $\Sigma_2$ on~$\mR$.
An isomorphism of orthogonal spaces 
\[ \bL_{\Sigma_2,\sigma}\ \iso \ B_{\gl} \Fc_2 \]
is induced at an inner product space $V$ by the $\Sigma_2$-equivariant 
natural homeomorphism
\[ \bL(\sigma, V)\ \iso \  S(V,2) \ , \quad  (\varphi:\sigma\to V)
\ \longmapsto \  \left( \varphi(1/\sqrt{2}),-\varphi(1/\sqrt{2}) \right)\ .\]
This isomorphism sends 
the tautological class~$u_{\Sigma_2,\sigma}$  (see~\eqref{eq:tautological_class})
in~$\pi_0^{\Sigma_2}(\bL_{\Sigma_2,\sigma})$ to the class $u_2\in \pi_0^{\Sigma_2}(B_{\gl}\Fc_2)$.
So by Theorem~\ref{thm-pi_0 of L_G}
every element of $\pi_0^K (B_{\gl}\Fc_2)$
is of the form~$\alpha^*(u_2)$ for a {\em unique} conjugacy classes
of continuous group homomorphism $\alpha:K\to\Sigma_2$.
For $n\geq 3$, however, $\alpha$ is typically not
unique up to conjugacy, and $\upi_0 (B_{\gl}\Fc_n)$ is {\em not}
a representable Rep-functor.
\end{rk}

\section{Orthogonal spectra}

In this section we recall orthogonal spectra, 
the objects that represent {\em stable} global homotopy types.
Orthogonal spectra are used, at least implicitly, in~\cite{may-quinn-ray},
and the term `orthogonal spectrum' was introduced in~\cite{mmss}, where a
non-equivariant stable model structure for orthogonal spectra
was constructed.
Before giving the formal definition of orthogonal spectra we try to motivate it.
An orthogonal space $Y$ assigns values 
to all finite dimensional inner product spaces.
Informally speaking, the global homotopy type is encoded in
the $G$-spaces obtained as the `homotopy colimit of $Y(V)$
over all $G$-representations~$V$'.
So besides the values~$Y(V)$, we use the $O(V)$-action 
(which is turned into a $G$-action when~$G$ acts on~$V$)
and the information about inclusions of inner product spaces.
All this information is conveniently encoded as a continuous functor 
from the category~$\bL$.

An orthogonal spectrum~$X$ is a stable analog of this: it assigns
a based space~$X(V)$ to every inner product space, and it keeps
track of an $O(V)$-action on~$X(V)$ (to get $G$-homotopy types when $G$ acts on~$V$)
and of a way to stabilize by suspensions.
When doing this in a coordinate free way, the stabilization data
assigns to a linear isometric embedding~$\varphi:V\to W$ a 
continuous based map
\[ \varphi_\star \ : \ X(V)\sm S^{\varphi^\perp}\ \to \ X(W) \]
that `varies continuously with~$\varphi$'.
To make the continuous dependence rigorous one exploits 
that the orthogonal complements $\varphi^\perp$ vary in a locally trivial way, 
i.e., they are the fibers of an `orthogonal complement' vector bundle
over the space of~$\bL(V,W)$ of linear isometric embeddings.
All the structure maps~$\varphi_\star$ together define a map 
on the smash product of~$X(V)$ with the Thom space of this complement bundle,
and the continuity in~$\varphi$ is formalized by
requiring continuity of that map.
The Thom spaces together form the morphism spaces of a based
topological category, and the data of an orthogonal spectrum 
can conveniently be packaged as a continuous based
functor on this category.

\begin{construction}
We let $V$ and $W$ be inner product spaces. 
The `orthogonal complement' vector bundle 
over the space~$\bL(V,W)$
is the subbundle of the trivial vector bundle $W\times\bL(V,W)$
with total space
\[ \xi(V,W) \ = \ 
\{\, (w,\varphi) \in W\times\bL(V,W) \ | \ \text{$\td{w,\varphi(v)}=0$ for all $v\in V$}\,\} \ .\]
The fiber over $\varphi:V\to W$ is the orthogonal complement of the image of $\varphi$.

We let $\bO(V,W)$ be the one-point compactification of the total space of $\xi(V,W)$;
since the base space~$\bL(V,W)$ is compact, $\bO(V,W)$ is also the Thom space 
of the bundle~$\xi(V,W)$.
Up to non-canonical homeomorphism, we can describe the space $\bO(V,W)$ 
differently as follows: if $\dim V=n$ and $\dim W=n+m$,
then $\bL(V,W)$ is homeomorphic to the homogeneous space $O(n+m)/O(m)$
and $\bO(V,W)$ is homeomorphic to $O(n+m)_+\sm_{O(m)}S^m$.

The spaces $\bO(V,W)$ are the morphism spaces of a based topological category.
Given a third inner product space $U$, the bundle map 
\[ \xi(V,W) \times \xi(U,V) \ \to \ \xi(U,W) \ , \quad
((w,\varphi),\,(v,\psi)) \ \longmapsto \ (w+\varphi(v),\,\varphi\psi)\]
covers the composition in~$\bL$.
Passage to one-point compactification gives a based map
\[ \circ \ : \ \bO(V,W) \sm \bO(U,V) \ \to \ \bO(U,W) \]
which is the composition in the category $\bO$. 
The identity of $V$ is $(0,\Id_V)$ in $\bO(V,V)$.
\end{construction}

\begin{defn}\label{def-orthogonal spectrum}
An {\em orthogonal spectrum}
is a based continuous functor from~$\bO$ to the category of based spaces.
A {\em morphism} is a natural transformation of functors.
We denote by $\spec$ the category of orthogonal spectra.
\end{defn}

We denote by $S^V$ the one-point compactification of an inner product space~$V$,
with basepoint at infinity. 
If~$X$ is an orthogonal spectrum and~$V$ and~$W$ inner product spaces,
we define the {\em structure map}
\[  \sigma_{V,W} \ : \ X(V)\sm S^W \ \to\  X(V\oplus W)  \]
as the composite
\[  X(V)\sm S^W\ \xra{X(V)\sm ((0,-),i_V)} \  X(V)\sm \bO(V,V\oplus W) 
\ \xra{\ X\ }\  X(V\oplus W)  \]
where $i_V:V\to V\oplus W$ is the inclusion of the first summand.
If a compact Lie group $G$ acts on~$V$ and~$W$ by linear isometries, then
$X(V)$ becomes a based $G$-space by 
restriction of the action of $\bO(V,V)=O(V)_+$,
and the structure map~$\sigma_{V,W}$ is $G$-equivariant.

\begin{rk}\label{rk-pointset properties} 
Given an orthogonal spectrum $X$ and a compact Lie group $G$,
the collection of $G$-spaces~$X(V)$ and the structure maps
$\sigma_{V,W}$ form an {\em orthogonal $G$-spectrum}
in the sense of~\cite{mandell-may} that we denote by~$X_G$.
However, only very special orthogonal $G$-spectra arise in this way from
an orthogonal spectrum. More precisely, an orthogonal $G$-spectrum $Y$
is isomorphic to~$X_G$ for some orthogonal spectrum $X$ if and only if 
for every {\em trivial} $G$-representation $V$, the $G$-action on~$Y(V)$ is trivial. 
An orthogonal $G$-spectrum that does not satisfy
this condition is the equivariant suspension spectrum of a 
based $G$-space with non-trivial $G$-action.
In Remark~\ref{rk-homotopy properties}
below we isolate some conditions on the Mackey functor
homotopy groups of an orthogonal $G$-spectrum that hold 
for all $G$-spectra of the special form $X_G$.
\end{rk}

As we just explained,
an orthogonal spectrum $X$ has an underlying orthogonal $G$-spectrum
for every compact Lie group $G$.
As such, it has equivariant stable homotopy groups,
whose definition we now recall.
As before, $s(\Uc_G)$ denotes the poset, under inclusion, 
of finite dimensional $G$-subrepresentations 
of the complete~$G$-universe $\Uc_G$.
For $k\geq 0$ we consider the functor from~$s(\Uc_G)$ to sets 
that sends $V\in s(\Uc_G)$ to 
\[  [S^{k+V}, X(V)]^G \ ,\]
the set of $G$-equivariant homotopy classes of based $G$-maps
from $S^{k+V}$ to $X(V)$ (where $k+V$ is short hand for $\mR^k\oplus V$ 
with trivial $G$-action on~$\mR^k$).
The map induced by an inclusion $V\subseteq W$ in $s(\Uc_G)$  sends the homotopy class
of~$f:S^{k+V}\to X(V)$ to the class of the composite
\[ S^{k+W} \iso \ S^{k+ V}\sm S^{W-V} \ \xra{\ f\sm S^{W-V}} \ 
X(V)\sm S^{W-V} \ \xra{\sigma_{V,W-V}}\ X(V\oplus (W-V))\ = \ X(W) \ ,  \]
where $W-V$ is the orthogonal complement of~$V$ in~$W$.
The {\em $k$-th equivariant homotopy group} $\pi_0^G(X)$ 
is then defined as
\begin{equation}\label{eq:defined_pi_0^G}
 \pi_k^G(X) \ = \ \colim_{V\in s(\Uc_G)}\, [S^{k+V}, X(V)]^G  \ ,  
\end{equation}
the colimit of this functor over the poset~$s(\Uc_G)$.
For $k< 0$, the definition is essentially the same, but we take a colimit
over~$s(\Uc_G)$ of the sets $[S^V, X(\mR^{-k}\oplus V)]^G$.
When the fixed points $V^G$ have dimension at least~2, then
$[S^V,X(V)]^G$ comes with a commutative group structure, and the maps 
out of it are homomorphisms. 
The $G$-subrepresentations $V$ with $\dim(V^G)\geq 2$
are cofinal in the poset $s(\Uc_G)$, so the abelian group structures on $[S^V,X(V)]^G $
for  $\dim(V^G)\geq 2$ assemble into a well-defined and
natural abelian group structure on the colimit $\pi_0^G(X)$.
The argument for~$\pi_k^G(X)$ is similar.

\begin{defn}\label{def-global equivalence}
A morphism $f:X\to Y$ of orthogonal spectra 
is a {\em global equivalence}
if the induced map $\pi_k^G(f):\pi_k^G(X) \to \pi_k^G(Y)$ is an isomorphism
for all compact Lie groups $G$ and all integers~$k$.
\end{defn}

The {\em global stable homotopy category}
is the category obtained from the category of orthogonal spectra 
by formally inverting the global equivalences. 
The global equivalences are the weak equivalences of the
{\em global model structure} on the category of orthogonal spectra,
see~\cite{schwede-global}.
So the methods of homotopical algebra are available for studying global
equivalences and the associated global homotopy category.

\medskip

Now we set up the formalism of {\em global functors},
the natural (in fact, the tautological) home of the collection of equivariant
homotopy groups of an orthogonal spectrum.
In this language we then describe
the equivariant homotopy groups  $\pi_0^G(Sp^n)$ of the symmetric product
spectrum~$Sp^n$ as a whole, i.e., when 
the compact Lie group $G$ is varying:
the global functor $\upi_0( Sp^n )$ is the quotient of the Burnside ring global functor 
by a single basic relation.

\begin{defn}[Global Burnside category]
The {\em global Burnside category} $\bA$
has  all compact Lie groups as objects; 
the morphisms from a group $G$ to $K$ are defined as
\[ \bA(G,K) \ = \ \text{Nat}(\pi_0^G,\pi_0^K) \ ,\]
the set of natural transformations of functors, from orthogonal spectra to sets,
between the equivariant homotopy group functors~$\pi_0^G$ and $\pi_0^K$. 
Composition in~$\bA$ is composition of natural transformations.  
\end{defn}

It is not a priori clear that the natural transformations 
from~$\pi_0^G$ to~$\pi_0^K$ form a set (as opposed to a proper class),
but this follows from the representability result 
in Proposition~\ref{prop-B_gl represents} below. 
The functor $\pi_0^K$ is abelian group valued,
so the set $\bA(G,K)$ is an abelian group under objectwise addition of transformations.
Composition is additive in each variable, so $\bA(G,K)$ is a pre-additive category.

The Burnside category $\bA$ is skeletally small: isomorphic compact Lie groups
are also isomorphic in the category $\bA$, and every compact Lie
group is isomorphic to a closed subgroup of an orthogonal group~$O(n)$.

\begin{defn}\label{def-global functor} 
A {\em global functor} is an additive functor 
from the global Burnside category $\bA$ 
to the category of abelian groups.
A morphism of global functors is a natural transformation.
\end{defn}

As a category of additive functors out of a skeletally small 
pre-additive category, the category of global functors is abelian
with enough injectives and projectives.
The global Burnside category $\bA$ is designed so that
the collection of equivariant homotopy groups of an orthogonal spectrum 
is tautologically a global functor.
Explicitly, the global homotopy group functor $\upi_0(X)$
of an orthogonal spectrum $X$ is defined on objects by
\[ \upi_0(X)(G)\ = \ \pi_0^G(X) \]
and on morphisms by evaluating natural transformations at $X$.

It is less obvious that conversely every global functor is isomorphic
to the homotopy group global functor~$\upi_0(X)$ of some orthogonal spectrum $X$.
We show this in~\cite{schwede-global} by constructing
Eilenberg-Mac\,Lane spectra from global functors.
In fact, the full subcategories of globally connective 
respectively globally coconnective orthogonal spectra define a non-degenerate 
t-structure on the triangulated global stable homotopy category,
and the heart of this t-structure is (equivalent to) 
the abelian category of global functors.

The abstract definition of the global Burnside category $\bA$ is convenient for
formal considerations and for defining the global functor $\upi_0(X)$ 
associated to an orthogonal spectrum~$X$, 
but to facilitate calculations 
we should describe the groups $\bA(G,K)$ more explicitly.
As we shall explain, the operations between the equivariant homotopy groups
come from two different sources:  restriction maps
along continuous group homomorphisms
and transfer maps along inclusions of closed subgroups.
A quick way to define the restriction maps, 
and to deduce some of their properties, 
is to interpret~$\pi_0^G(X)$ as the $G$-equivariant homotopy set, 
as defined in~\eqref{eq:define_pi_0^G_set}, of a certain orthogonal space.

\begin{construction}\label{con-Omega^bullet}
We recall the functor
\[ \Omega^\bullet \ : \ \spec \ \to \ \spc \]
from orthogonal spectra to orthogonal spaces.
Given an orthogonal spectrum $X$, the value of 
$\Omega^\bullet X$ at an inner product space $V$ is
\[ (\Omega^\bullet X)(V)\ = \ \map(S^V,X(V))\ . \]
If $\varphi:V\to W$ is a linear isometric embedding, the induced map
\[ \varphi_* \ : \ (\Omega^\bullet X)(V)\ = \ \map(S^V,X(V)) \ \to \ 
 \map(S^W,X(W))\ = \ (\Omega^\bullet X)(W) \]
is by `conjugation and extension by the identity'.
In more detail: given a continuous based map $f:S^V\to X(V)$ 
we define $\varphi_*(f):S^W\to X(W)$ as the composite
\[ S^W \iso \ S^V\sm S^{\varphi^\perp}
\ \xra{ f\sm S^{\varphi^\perp}} \ 
X(V)\sm S^{\varphi^\perp} \ \xra{\sigma_{V,\varphi^\perp}}
X(V\oplus \varphi^\perp) \ \iso\ X(W) \ , \]
where each of the two unnamed homeomorphisms uses $\varphi$ to identify 
$V\oplus \varphi^\perp$ with $W$.
In particular, the orthogonal group $O(V)$ acts on 
$(\Omega^\bullet X)(V)$ by conjugation.
The assignment $(\varphi,f)\mapsto \varphi_*(f)$ is continuous in both variables
and functorial in~$\varphi$.
In other words, we have defined an orthogonal space $\Omega^\bullet X$.

The functor $\Omega^\bullet$ has a left adjoint, defined as follows.
To every orthogonal space $Y$ we can associate an unreduced
{\em suspension spectrum}
 $\Sigma^\infty_+Y$ whose value on an inner product space is given by
\[ (\Sigma^\infty_+Y)(V)\ = \ Y(V)_+\sm S^V\ ; \]
the structure map
\[\bO(V,W)\sm Y(V)_+\sm S^V \ \to \  Y(W)_+\sm S^W   \]
is given by
\[  (w,\varphi)\sm y\sm v \ \longmapsto \ Y(\varphi)(y)\sm (w+\varphi(v))\ .    \]
If $Y$ is the constant orthogonal space with value~$A$,
then $\Sigma^\infty_+Y$ specializes to the usual suspension spectrum
of~$A$ with a disjoint basepoint added.
\end{construction}

If $G$ acts on $V$ by linear isometries, then the 
$G$-fixed subspace of $(\Omega^\bullet X)(V)$ is the space
of $G$-equivariant based maps from $S^V$ to $X(V)$.
The path components of this space are precisely the equivariant homotopy
classes of based $G$-maps, i.e.,
\[ \pi_0\left(((\Omega^\bullet X)(V))^G\right)\ = \ 
\pi_0\left( \map^G(S^V,X(V))\right) \ = \ [S^V,X(V)]^G\ . \]
Passing to the colimit over the poset $s(\Uc_G)$ gives
\[ \pi_0^G(\Omega^\bullet X)\ = \ \pi_0^G(X)\ , \]
i.e., the $G$-equivariant homotopy group of the orthogonal spectrum $X$
equals the $G$-equivariant homotopy set 
(as previously defined in~\eqref{eq:define_pi_0^G_set})
of the orthogonal space $\Omega^\bullet X$.
So by specializing the restriction maps for orthogonal spaces
we obtain restriction maps
\[ \alpha^*\ : \   \pi_0^G(X)\ \to \ \pi_0^K(X)\]
for every continuous group homomorphism $\alpha:K\to G$.
These restriction maps are again contravariantly functorial
and depend only on the conjugacy class of~$\alpha$
(by Proposition~\ref{prop-inner automorphism}).
Moreover,~$\alpha^*$ is additive, i.e., a group homomorphism.

The transfer maps  
\[ \tr_H^G\ :\ \pi_0^H(X)\ \to\ \pi_0^G(X) \]
are the classical ones that arise from the orthogonal
$G$-spectrum underlying $X$; they are defined whenever~$H$ is a closed
subgroup of~$G$ and constructed by an equivariant
Thom-Pontryagin construction~\cite[Sec.\,IX.3]{may-alaska}, \cite{nishida-transfer}.
Transfers are additive and natural for homomorphisms of orthogonal spectra;
since we only consider degree~0 transfers (as opposed to more general
`dimension shifting transfers'), the transfer~$\tr_H^G$
is trivial whenever~$H$ has infinite index in its normalizer in~$G$.

As we shall now explain, 
the suspension spectrum functor `freely builds in' the extra structure
that is available at the level of $\pi_0^G$ for orthogonal {\em spectra} 
(as opposed to orthogonal {\em spaces}), 
namely the abelian group structure and transfers.
We let $Y$ be an orthogonal space and $G$ a compact Lie group.
We define a stabilization map
\begin{equation}  \label{eq:sigma_map}
 \sigma^G\ : \ \pi_0^G(Y) \ \to \ \pi_0^G(\Sigma^\infty_+ Y)  
\end{equation}
as the effect of the adjunction unit $Y\to\Omega^\bullet(\Sigma^\infty_+ Y)$
on the  $G$-equivariant homotopy set $\pi_0^G$, using the identification
$\pi_0^G(\Omega^\bullet(\Sigma^\infty_+ Y))=\pi_0^G(\Sigma^\infty_+ Y)$.
More explicitly: if $V$ is a finite dimensional $G$-subrepresentation
of the complete $G$-universe $\Uc_G$ and $y\in Y(V)^G$ a $G$-fixed point, then
$\sigma^G[y]$ is represented by the $G$-map
\[ S^V\ \xra{y\sm -} \ Y(V)_+\sm S^V \ = \ (\Sigma^\infty_+ Y)(V) \ .\]
The stabilization maps~\eqref{eq:sigma_map} commute with restriction,
since they arise from a morphism of orthogonal spaces.

For a closed subgroup $L$ of a compact Lie group $K$, the normalizer $N_K L$ acts
on $L$ by conjugation, and hence on~$\pi_0^L(Y)$ by restriction 
along the conjugation maps. Restriction along an inner automorphism
is the identity, so the action of $N_K L$ factors over an
action of the Weyl group $W_K L=N_K L/L$ on $\pi_0^L(Y)$.
After passing to the stable classes along the map
$\sigma^L:\pi_0^L(Y)\to\pi_0^L(\Sigma^\infty_+ Y)$, we can then transfer from $L$ to $K$.
For an element $k\in N_K L$ and a class $x\in\pi_0^L(Y)$ we have
\[ \tr_L^K(\sigma^L(c_k^*(x))) \ = \ \tr_L^K(c_k^*(\sigma^L(x))) \ = \
c_k^*(\tr_L^K(\sigma^L(x))) \ = \ \tr_L^K(\sigma^L(x)) \]
because transfer commutes with restriction along the conjugation maps
\[ c_k\ : \ L \ \to \ L \text{\qquad respectively\qquad} c_k \ : \ K \ \to \ K  \]
defined by $c_k(h)=k^{-1}h k$.
So transferring from $L$ to $K$ in the global functor $\upi_0(\Sigma^\infty_+ Y)$
equalizes the action of the Weyl group $W_K L$ on $\pi_0^L(Y)$.

\begin{prop}\label{prop-pi_0 of Sigma^infty} 
Let $Y$ be an orthogonal space. Then for every compact Lie group $K$ 
the equivariant homotopy group $\pi_0^K(\Sigma^\infty_+ Y)$
of the suspension spectrum of $Y$ is a free abelian group with a basis 
given by the elements
\[ \tr_L^K(\sigma^L(x)) \]
where $L$ runs through all conjugacy classes of closed subgroups of $K$
with finite Weyl group and $x$ runs through a set of representatives 
of the $W_K L$-orbits of the set $\pi_0^L(Y)$.
\end{prop}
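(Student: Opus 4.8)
The plan is to reduce the claim to the known non-equivariant and $G$-fixed-point analysis by decomposing $(\Sigma^\infty_+ Y)(V)$ according to orbit types, then identifying the resulting pieces with suspension spectra of spaces of orbits. First I would recall the standard stable-homotopy computation for a based $G$-space: for a $G$-CW complex $A$ one has a natural isomorphism
\[ \pi_0^K(\Sigma^\infty A) \ \cong \ \bigoplus_{(L)} \ \pi_0^{W_K L}\left((A^L / A^{>L})\right) \]
where $(L)$ ranges over conjugacy classes of subgroups of $K$ with finite Weyl group and $A^{>L}\subseteq A^L$ is the subspace of points with strictly larger isotropy (this is the tom Dieck splitting, or its colimit-over-$V$ incarnation). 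Applying this with $A = Y(V)_+ \sm S^V$ and passing to the colimit over $V \in s(\Uc_K)$, the $S^V$-factor supplies the suspension coordinates that define $\pi_0$, and the basepoint in $Y(V)_+$ means the summand indexed by $L$ is controlled by the based $L$-fixed points of $Y(V)_+\sm S^V$ modulo those with larger isotropy. Since $Y(V)_+$ is a disjoint union of a point with $Y(V)$, and the $\Sigma_n$-free locus plays no role here, the $L$-summand reduces to $\pi_0^{W_K L}(\Sigma^\infty_+$ of the $L$-fixed points of $Y(V)$ that have isotropy exactly $L$), and in the colimit this is a sum over $W_K L$-orbits of $\pi_0^L(Y)$.

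Concretely, the key steps in order: (1) For fixed $V$, stratify $Y(V)_+\sm S^V$ by isotropy and invoke the tom Dieck-type splitting to write $\pi_0^K$ of its suspension spectrum as a direct sum over $(L)$ of $W_K L$-equivariant $\pi_0$ of the cofiber $(Y(V)^L)_+\sm S^V \big/ (Y(V)^{>L})_+\sm S^V$. (2) Observe that this cofiber is $\big((Y(V)^L \setminus Y(V)^{>L})_+\big)\sm S^V$ up to based homotopy, i.e. the free part of the $L$-fixed points with their residual $W_K L$-action. (3) Take the colimit over $s(\Uc_K)$; since $s(\Uc_K)^L$ restricted to an $L$-universe is again a complete (or at least cofinal) universe for the relevant groups, the colimit of $\pi_0((Y(V)^L \setminus Y(V)^{>L})^{W})$ over $V$ computes, after summing over $W_K L$-orbits, the set $\pi_0^L(Y)$ with its $W_K L$-action. (4) Match the resulting direct-sum generators with the classes $\tr_L^K(\sigma^L(x))$: the summand corresponding to $(L)$ and a $W_K L$-orbit is free of rank one on $\tr_L^K(\sigma^L(x))$, by naturality of the tom Dieck splitting with respect to the transfer and the fact that $\sigma^L$ realizes the generator of the geometric-fixed-point summand. (5) Freeness and the basis statement then follow since each summand is $\mZ$ and the transfer classes were shown above to depend only on the $W_K L$-orbit of $x$.

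The main obstacle I expect is making step (3) — the interchange of the isotropy-stratification with the colimit over $s(\Uc_K)$ and the identification of the $L$-fixed-point colimit with $\pi_0^L(Y)$ — fully rigorous, since $s(\Uc_K)^L$ is not literally $s(\Uc_L)$ for the chosen complete $L$-universe $\Uc_L$, and one must argue cofinality (as was done in the proof of Theorem~\ref{thm-pi_0 of L_G}), together with checking that the residual action of $W_K L$ on the colimit is exactly the restriction action used to define $\pi_0^L(Y)$ as a $W_K L$-set. A secondary technical point is verifying that the stratification of $Y(V)_+\sm S^V$ is sufficiently nice (a $G$-CW filtration, or cofibrant enough) for the splitting to apply; this should follow from standard cofibrancy hypotheses on orthogonal spaces, or can be arranged by replacing $Y$ by a cofibrant model, which does not change $\pi_0^K$ of either $Y$ or $\Sigma^\infty_+ Y$.
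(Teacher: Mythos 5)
Your proposal is essentially the paper's argument: both reduce the claim to tom\,Dieck's splitting theorem for suspension spectra of genuine $K$-spaces and then commute that splitting with the colimit over the universe, matching basis elements with the classes $\tr_L^K(\sigma^L(x))$ by naturality of the splitting under transfers (a verification the paper also only sketches). The obstacle you single out in step~(3) is handled in the paper by a short cofinality device rather than a delicate interchange: one considers the functor $(V,U)\mapsto[S^V,Y(U)_+\sm S^V]^K$ on the product poset $s(\Uc_K)^2$, observes that the diagonal is cofinal, and computes the colimit in two stages --- first over the suspension variable $V$, which produces $\pi_0^K(\Sigma^\infty_+ Y(U))$ for the fixed $K$-space $Y(U)$, and then over $U$ --- after which the classical splitting in its Borel form $\bigoplus_{(L)}\pi_0^{W L}(\Sigma^\infty_+(E W L\times Y(U)^L))$ applies verbatim and also sidesteps your cofibrancy concerns about the isotropy strata $A^L/A^{>L}$.
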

\begin{proof}
We consider the functor on the product poset $s(\Uc_K)^2$ sending
$(V,U)$ to the set $[S^V, Y(U)_+\sm S^V]^K$.
The diagonal is cofinal in $s(\Uc_K)^2$, thus the induced map 
\[ \pi_0^K(\Sigma^\infty_+ Y)\ = \ \colim_{V\in s(\Uc_G)} 
 [S^V, Y(V)_+\sm S^V]^K \ \to \ \colim_{(V,U)\in s(\Uc_K)^2}  [S^V, Y(U)_+\sm S^V]^K \]
is an isomorphism.
The target can be calculated in two steps, so the group we are after
is isomorphic to
\[ \colim_{U\in s(\Uc_K)} \left( \colim_{V\in s(\Uc_K)}  [S^V, Y(U)_+\sm S^V]^K \right)
\ = \ \colim_{U\in s(\Uc_K)}  \pi_0^K\left(\Sigma^\infty_+ Y(U) \right)\ .\]
We may thus show that the latter group is free abelian with the specified basis.

The rest of the argument is well known, and a version of it
can be found in~\cite[V Cor.\,9.3]{lms}.
The tom\,Dieck splitting~\cite[Satz~2]{tomDieck-OrbittypenII} provides an isomorphism
\[ \bigoplus_{(L)} \pi_0^{W L}( \Sigma^\infty_+( E W L \times Y(U)^L) ) \ \iso \
 \pi_0^K(\Sigma^\infty_+ Y(U)) \ ,\]
where the sum is indexed over all conjugacy classes of closed subgroups $L$ 
and $W L=W_K L$ is the Weyl group of~$L$  in~$K$.
By~\cite[Sec.\,4]{tomDieck-OrbittypenII} the group 
$\pi_0^{W L}( \Sigma^\infty_+( E W L \times Y(U)^L))$ 
vanishes if the Weyl group $W L$ is
infinite; so only the summands with finite Weyl group contribute to $\pi_0^K$.
On the other hand, if the Weyl group $W L$ is finite, then 
the group $\pi_0^{W L}( \Sigma^\infty_+ ( E W L \times Y(U)^L) )$
is free abelian with a basis given by the set
$W L \bs \pi_0(  Y(U)^L )$,
the $W L$-orbit set of the path components of $Y(U)^L$.

Since colimits commute among themselves, we conclude that
\begin{align*}
 \pi_0^K\left(\Sigma^\infty_+ Y \right)\ &\iso \
\colim \, \pi_0^K\left(\Sigma^\infty_+ Y(U) \right)\ \iso \
 \colim  \bigoplus_{(L)}  \mZ\{ W L\bs \pi_0(  Y(U)^L ) \}
\\
&\iso \
\bigoplus_{(L)}  \mZ\{ W L\bs   (\colim  \pi_0(  Y(U)^L )) \}\
= \ 
\bigoplus_{(L)}  \mZ\{ W L\bs  \pi_0^L(Y) \} \ , 
\end{align*}
where all colimits are over the poset~$s(\Uc_G)$
and the sums are indexed by conjugacy classes with finite Weyl groups.
To verify that this composite isomorphism
takes the class $\tr_L^K(\sigma^L(x))$ in~$ \pi_0^K\left(\Sigma^\infty_+ Y \right)$
to the basis element corresponding to the orbit of~$x\in \pi_0^L(Y)$ 
in the summand indexed by~$L$, one needs to recall the definition
of the isomorphism in tom Dieck's splitting
from~\cite{tomDieck-OrbittypenII}; we omit this.
\end{proof}

Now we prove a representability result.
The {\em stable tautological class} 
\[ e_{G,V} \ = \ \sigma^G(u_{G,V})\ \in \ \pi_0^G(\Sigma^\infty_+ \bL_{G,V}) \]
arises from the unstable tautological class 
$u_{G,V}$ defined in~\eqref{eq:tautological_class}
by applying the stabilization map~\eqref{eq:sigma_map};
so it is represented by the $G$-map
\[  S^V \ \to \ (\bL(V,V)/G)^+\sm S^V\ = \ (\Sigma^\infty_+ \bL_{G,V})(V) \ , \quad 
v\ \longmapsto (\Id_V\cdot G)\sm v  \ .\]

\begin{prop}\label{prop-B_gl represents}  
Let $G$ and $K$ be compact Lie groups and $V$ a faithful $G$-representation.
Then evaluation at the stable tautological class is an isomorphism
\[  \bA(G,K) \ \xra{\ \iso\ } \ \pi_0^K(\Sigma^\infty_+ \bL_{G,V}) \ , \quad
\tau\ \longmapsto \ \tau(e_{G,V})\]
to the 0-th $K$-equivariant homotopy group 
of the orthogonal spectrum $\Sigma^\infty_+ \bL_{G,V}$.
Hence the morphism
\[ \bA(G,-)\ \to \ \upi_0(\Sigma^\infty_+ \bL_{G,V} ) \]
classified by the stable tautological class $e_{G,V}$ 
is an isomorphism of global functors.  
\end{prop}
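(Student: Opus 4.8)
The plan is to combine the universal property of the free orthogonal space $\bL_{G,V}$ with the Yoneda-type description of natural transformations and with the explicit basis for $\pi_0^K(\Sigma^\infty_+ Y)$ provided by Proposition~\ref{prop-pi_0 of Sigma^infty}. First I would unwind what the map $\tau\mapsto\tau(e_{G,V})$ does: since $e_{G,V}=\sigma^G(u_{G,V})$ and $u_{G,V}\in\pi_0^G(\bL_{G,V})$ is the tautological class, a natural transformation $\tau:\pi_0^G\to\pi_0^K$ is entirely determined, on the orthogonal spectrum $\Sigma^\infty_+\bL_{G,V}$, by where it sends $e_{G,V}$; conversely any element of $\pi_0^K(\Sigma^\infty_+\bL_{G,V})$ should be $\tau(e_{G,V})$ for a unique $\tau$. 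The key input making this precise is the fact quoted in the paper from~\cite{schwede-global}: every natural transformation $\pi_0^G\to\pi_0^K$ of set-valued functors on orthogonal \emph{spaces} is $\alpha^*$ for a unique conjugacy class $\alpha:K\to G$; together with the adjunction $(\Sigma^\infty_+,\Omega^\bullet)$ and Theorem~\ref{thm-pi_0 of L_G}, this should pin down the source.

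Concretely I would proceed as follows. Step one: construct the inverse map. Given $z\in\pi_0^K(\Sigma^\infty_+\bL_{G,V})$, use Proposition~\ref{prop-pi_0 of Sigma^infty} to write $z$ uniquely as a finite $\mZ$-linear combination $z=\sum_i n_i\,\tr_{L_i}^K(\sigma^{L_i}(x_i))$, where each $L_i\leq K$ has finite Weyl group and $x_i\in\pi_0^{L_i}(\bL_{G,V})$ runs over $W_KL_i$-orbit representatives. By Theorem~\ref{thm-pi_0 of L_G}, each $x_i$ equals $\alpha_i^*(u_{G,V})$ for a unique conjugacy class $\alpha_i:L_i\to G$. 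Now define a natural transformation $\tau_z:\pi_0^G\Rightarrow\pi_0^K$ of abelian-group-valued functors on orthogonal spectra by the formula $\tau_z(c)=\sum_i n_i\,\tr_{L_i}^K((\alpha_i)^*(c))$ for $c\in\pi_0^G(X)$; this is well-defined because transfer commutes with restriction along conjugation and the $x_i$ were chosen up to the $W_KL_i$-action. Step two: check $\tau_z(e_{G,V})=z$. Since the stabilization map $\sigma^G$ commutes with restriction, $(\alpha_i)^*(e_{G,V})=(\alpha_i)^*(\sigma^G(u_{G,V}))=\sigma^{L_i}((\alpha_i)^*(u_{G,V}))=\sigma^{L_i}(x_i)$, so $\tau_z(e_{G,V})=\sum_i n_i\,\tr_{L_i}^K(\sigma^{L_i}(x_i))=z$. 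Step three: check $\tau\mapsto\tau(e_{G,V})$ is injective, i.e. that $\tau$ is recovered from $\tau(e_{G,V})$. Here I would use that the unit $\bL_{G,V}\to\Omega^\bullet(\Sigma^\infty_+\bL_{G,V})$ sends $u_{G,V}$ to $e_{G,V}$ (under the identification $\pi_0^K(\Omega^\bullet(-))=\pi_0^K(-)$), together with the uniqueness statement for natural operations on orthogonal spaces from~\cite{schwede-global}: decomposing $\tau(e_{G,V})$ via Proposition~\ref{prop-pi_0 of Sigma^infty} and Theorem~\ref{thm-pi_0 of L_G} reconstructs the data $(L_i,n_i,\alpha_i)$, and naturality of $\tau$ forces $\tau=\tau_{\tau(e_{G,V})}$. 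Finally, the displayed second assertion is a formal consequence: the isomorphisms $\bA(G,K)\xra{\iso}\pi_0^K(\Sigma^\infty_+\bL_{G,V})$ are natural in $K$ by construction (both sides are functors on $\bA$ and the comparison map is built from restrictions and transfers), so the classifying morphism of global functors $\bA(G,-)\to\upi_0(\Sigma^\infty_+\bL_{G,V})$ is an isomorphism.

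The main obstacle, I expect, is Step three: verifying that $\tau$ is genuinely determined by the single element $\tau(e_{G,V})$. The subtlety is that $\tau$ is a natural transformation on \emph{all} orthogonal spectra, whereas $e_{G,V}$ lives in one particular spectrum $\Sigma^\infty_+\bL_{G,V}$; one must argue that this spectrum is "universal enough" — essentially that $\Sigma^\infty_+\bL_{G,V}$ corepresents $\pi_0^G$ on the homotopy category in the relevant sense, which is exactly the content that makes the adjunction-plus-$\pi_0^G(\Omega^\bullet X)=\pi_0^G(X)$ argument go through. The honest way to do this is to invoke the classification of natural operations $\pi_0^G\to\pi_0^K$ on orthogonal \emph{spaces} from~\cite{schwede-global}, lift it across the $(\Sigma^\infty_+,\Omega^\bullet)$-adjunction, and combine it with the fact that transfers and the abelian group structure on $\pi_0^K$ of a \emph{spectrum} are exactly the extra structure freely adjoined by $\Sigma^\infty_+$ — which is the discussion preceding Proposition~\ref{prop-pi_0 of Sigma^infty}. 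Everything else (well-definedness of $\tau_z$, the computation $\tau_z(e_{G,V})=z$, naturality in $K$) is routine bookkeeping with restrictions, transfers, and the commutation relations already recorded in the excerpt.
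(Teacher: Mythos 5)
Your Steps one and two are correct and give a clean proof of \emph{surjectivity}: the operations $\sum_i n_i\,\tr_{L_i}^K\circ\alpha_i^*$ are genuine elements of $\bA(G,K)$, and the computation $\tau_z(e_{G,V})=z$ via $\alpha_i^*(e_{G,V})=\sigma^{L_i}(\alpha_i^*(u_{G,V}))$ is exactly right; this is in fact more explicit than the paper's treatment of surjectivity (which defines the inverse by a formula and omits the well-definedness check). The second assertion of the proposition is indeed formal once the first is established.

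The genuine gap is Step three, and you have correctly located it but not filled it. ``Naturality of $\tau$ forces $\tau=\tau_{\tau(e_{G,V})}$'' is the statement to be proved, not an argument: you must say with respect to \emph{which morphisms of orthogonal spectra} naturality is being applied, and why those morphisms suffice to propagate the value $\tau(e_{G,V})$ to $\tau(x)$ for an arbitrary class $x\in\pi_0^G(X)$ in an arbitrary spectrum $X$. The paper's key move, which is absent from your proposal, is this: represent $x$ by a based $G$-map $f:S^{V\oplus W}\to X(V\oplus W)$, note that its adjoint is a morphism of orthogonal spectra $\hat f:\Sigma^\infty_+\bL_{G,V\oplus W}\to X$ with $\hat f_*(e_{G,V\oplus W})=x$, and observe that the restriction morphism $\Sigma^\infty_+ r:\Sigma^\infty_+\bL_{G,V\oplus W}\to\Sigma^\infty_+\bL_{G,V}$ induces an isomorphism on $\pi_0^K$ for all $K$ (by Theorem~\ref{thm-pi_0 of L_G} plus Proposition~\ref{prop-pi_0 of Sigma^infty}) and carries $e_{G,V\oplus W}$ to $e_{G,V}$. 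Then $x=\hat f_*((\Sigma^\infty_+ r)_*^{-1}(e_{G,V}))$ and naturality of $\tau$ with respect to $\hat f$ and $(\Sigma^\infty_+ r)_*^{-1}$ yields $\tau(x)=\hat f_*((\Sigma^\infty_+ r)_*^{-1}(\tau(e_{G,V})))$. Your proposed substitute --- invoking the classification of operations on orthogonal \emph{spaces} and the claim that transfers and the group structure are ``exactly the extra structure freely adjoined by $\Sigma^\infty_+$'' --- does not work as stated: the classification for spaces does not transport across the adjunction to a classification for spectra without precisely the corepresentability argument above, and the ``freely adjoined'' claim is in substance Theorem~\ref{thm-Burnside category basis}(ii), which the paper \emph{deduces from} this proposition, so that route is circular.
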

\begin{proof}
We show first that every natural transformation $\tau:\pi_0^G\to \pi_0^K$ 
is determined by the element~$\tau(e_{G,V})$.
We let $X$ be any orthogonal spectrum and $x\in\pi_0^G(X)$
a $G$-equivariant homotopy class.
Without loss of generality the class $x$ is represented by a continuous based $G$-map
\[ f \ : \ S^{V\oplus W}\ \to \ X(V\oplus W) \]
for some $G$-representation $W$. 
This $G$-map is adjoint to a morphism of orthogonal spectra 
\[ \hat f \ : \ \Sigma^\infty_+ \bL_{G,V\oplus W} \ \to \ X 
\text{\qquad that satisfies\qquad}
 \hat f_*(e_{G,V\oplus W}) \ = \ x \text{\quad in\quad} \pi_0^G(X) \ .\]
We consider the morphism of orthogonal spaces 
$r:\bL_{G,V\oplus W}\to \bL_{G,V}$
that restricts a linear isometry from~$V\oplus W$ to~$V$.
The relation
\[ \pi_0^G(r)(u_{G,V\oplus W}) \ = \ u_{G,V} \]
shows that the composite
\[  \Rep(K,G)\ \xra{[\alpha]\mapsto \alpha^*(u_{G,V\oplus W})} \ 
\pi_0^K(\bL_{G,V\oplus W})\ \xra{\pi_0^K(r)} \ \pi_0^K(\bL_{G,V}) \]
is evaluation at the class $u_{G,V}$.
Evaluation at~ $u_{G,V\oplus W}$ and at~$u_{G,V}$
are both bijective by Theorem~\ref{thm-pi_0 of L_G},
so~$\pi_0^K(r)$ is bijective for all compact Lie groups~$K$.
By Proposition~\ref{prop-pi_0 of Sigma^infty}, 
the induced morphism of suspension spectra
\[ \Sigma^\infty_+ r\ : \ 
\Sigma^\infty_+ \bL_{G,V\oplus W}\ \to \ \Sigma^\infty_+ \bL_{G,V} \]
thus induces an isomorphism on $\pi_0^K(-)$ for all compact Lie groups~$K$,
and it sends $e_{G,W\oplus V}$ to $e_{G,V}$.
The diagram
\[ \xymatrix@C=25mm{
\pi_0^G(\Sigma^\infty_+ \bL_{G,V})\ar[d]_\tau &
\pi_0^G(\Sigma^\infty_+ \bL_{G,V\oplus W})\ar[d]_\tau 
\ar[l]_-{(\Sigma^\infty_+ r)_*}^-\iso\ar[r]^-{\hat f_*} &
\pi_0^G(X)\ar[d]^\tau \\
\pi_0^K(\Sigma^\infty_+ \bL_{G,V}) &
\pi_0^K(\Sigma^\infty_+ \bL_{G,W\oplus V}) \ar[l]^-{(\Sigma^\infty_+ r)_*}_-\iso
\ar[r]_-{\hat f_*} & \pi_0^K(X)} \]
commutes and the two left horizontal maps are isomorphisms.
Since 
\[ x \ = \ \hat f_*((\Sigma^\infty_+ r)_*^{-1}(e_{G,V})) \ ,\]
naturality yields that
\[   \tau(x) \ = \ \tau(\hat f_*((\Sigma^\infty_+ r)_*^{-1}(e_{G,V})))\ = \ 
\hat f_*((\Sigma^\infty_+ r)_*^{-1}(\tau(e_{G,V})))\ .  \]
So the transformation $\tau$ is determined by the value~$\tau(e_{G,V})$.

It remains to construct, for every element 
$y\in \pi_0^K(\Sigma^\infty_+ \bL_{G,V})$, a natural transformation
$\tau:\pi_0^G\to \pi_0^K$ with $\tau(e_{G,V})=y$.
The previous paragraph dictates what to do: 
we represent a given class $x\in\pi_0^G(X)$  by
a continuous based $G$-map $f:S^{V\oplus W}\to X(V\oplus W)$ as above and set
\[   \tau(x) \ = \ \hat f_*((\Sigma^\infty_+ r)_*^{-1}(y)))\ .\]
We omit the verification that the element $\tau(x)$
only depends on the class $x$ and that~$\tau $ is indeed natural.
\end{proof}

We show now that restriction and transfer maps generate all natural operations 
between the 0-dimensional equivariant homotopy group functors
for orthogonal spectra.
Given compact Lie groups~$K$ and $G$, we consider pairs $(L,\alpha)$ consisting of
\begin{itemize}
\item a closed subgroup $L\leq K$ whose Weyl group $W_K L$ is finite, and
\item  a continuous group homomorphism $\alpha:L\to G$.
\end{itemize}  
The {\em conjugate} of $(L,\alpha)$ by a pair of group elements
$(k,g)\in K\times G$  is the pair
$(^k L,\,c_g\circ \alpha\circ c_k)$ consisting of the
conjugate subgroup $^k L$ and the composite homomorphism
\[  {^k L} \ \xra{\ c_k\ }\ L \ \xra{\ \alpha\  }\ 
G \ \xra{\ c_g\ } \ G \ . \]
Since inner automorphisms induce the identity on equivariant homotopy groups,
\[ \tr_{^k L}^K \circ\, (c_g\circ \alpha \circ c_k)^*\ = \ 
 \tr_L^K \circ\, \alpha^* \ : \ \pi_0^G(X) \ \to \ \pi_0^K(X)\ , \]
i.e., conjugate pairs define the same operation on equivariant homotopy groups.

\begin{theorem}\label{thm-Burnside category basis} 
Let $G$ and $K$ be compact Lie groups.
\begin{enumerate}[\em (i)]
\item 
Let $V$ be a faithful $G$-representation. 
Then the homotopy group $\pi_0^K(\Sigma^\infty_+ \bL_{G,V})$
is a free abelian group with basis given by the classes
\[ \tr_L^K(\alpha^*(e_{G,V})) \]
as  $(L,\alpha)$ runs over a set of representatives of
all $(K\times G)$-conjugacy classes of pairs consisting of a closed subgroup~$L$ of $K$ 
with finite Weyl group and a continuous homomorphism $\alpha:L\to G$. 
\item
The morphism group $\bA(G,K)$ 
in the global Burnside category is a free abelian group 
with basis the operations~$\tr_L^K\circ \alpha^*$,
where $(L,\alpha)$ runs over all conjugacy classes of pairs consisting of
a closed subgroup~$L$ of~$K$ with finite Weyl group 
and a continuous homomorphism $\alpha:L\to G$. 
\end{enumerate}
\end{theorem}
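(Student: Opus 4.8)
The plan is to deduce both parts from Propositions~\ref{prop-pi_0 of Sigma^infty} and~\ref{prop-B_gl represents}, together with Theorem~\ref{thm-pi_0 of L_G}. Part~(ii) follows formally from part~(i): by Proposition~\ref{prop-B_gl represents} evaluation at the stable tautological class $e_{G,V}$ gives an isomorphism of abelian groups $\bA(G,K)\cong\pi_0^K(\Sigma^\infty_+\bL_{G,V})$ (choosing any faithful $G$-representation $V$, which exists since $G$ is a compact Lie group), and by naturality this isomorphism sends the operation $\tr_L^K\circ\alpha^*$ to the element $\tr_L^K(\alpha^*(e_{G,V}))$. So a basis of the right-hand group given by the classes $\tr_L^K(\alpha^*(e_{G,V}))$ transports to a basis of $\bA(G,K)$ given by the operations $\tr_L^K\circ\alpha^*$, as claimed. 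Thus the whole content is in part~(i).

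For part~(i), the strategy is to start from the general basis description of $\pi_0^K(\Sigma^\infty_+ Y)$ in Proposition~\ref{prop-pi_0 of Sigma^infty}, applied to $Y=\bL_{G,V}$. That proposition says $\pi_0^K(\Sigma^\infty_+\bL_{G,V})$ is free abelian with basis the elements $\tr_L^K(\sigma^L(x))$, where $L$ ranges over conjugacy classes of subgroups of $K$ with finite Weyl group and $x$ ranges over a set of representatives of the $W_K L$-orbits of $\pi_0^L(\bL_{G,V})$. Now I invoke Theorem~\ref{thm-pi_0 of L_G}: since $V$ is faithful as a $G$-representation, evaluation at the unstable tautological class $u_{G,V}$ gives a bijection $\Rep(L,G)\xrightarrow{\ \cong\ }\pi_0^L(\bL_{G,V})$, sending $[\alpha]\mapsto\alpha^*(u_{G,V})$. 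Applying the stabilization map $\sigma^L$ and using that it commutes with restriction, $\sigma^L(\alpha^*(u_{G,V}))=\alpha^*(\sigma^G(u_{G,V}))=\alpha^*(e_{G,V})$. So the basis elements $\tr_L^K(\sigma^L(x))$ are exactly the classes $\tr_L^K(\alpha^*(e_{G,V}))$ as $[\alpha]$ runs over representatives of $\pi_0^L(\bL_{G,V})=\Rep(L,G)$.

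The remaining point is bookkeeping about the indexing set: I need to match ``conjugacy classes of subgroups $L$, together with $W_K L$-orbits of $\Rep(L,G)$'' with ``$(K\times G)$-conjugacy classes of pairs $(L,\alpha)$.'' The action of $W_K L=N_K L/L$ on $\pi_0^L(\bL_{G,V})\cong\Rep(L,G)$ is by restriction along the conjugation automorphisms $c_k$ of $L$, i.e.\ $[\alpha]\cdot k=[\alpha\circ c_k]$; meanwhile $G$-conjugacy acts by $[\alpha]\mapsto[c_g\circ\alpha]$, and $\Rep(L,G)$ already means homomorphisms modulo $G$-conjugacy. Choosing a subgroup $L$ in each $K$-conjugacy class and then a $W_K L$-orbit representative of $\Rep(L,G)$ is precisely the same as choosing a $(K\times G)$-conjugacy class representative of the pairs $(L,\alpha)$, exactly as in the displayed discussion preceding the theorem (where it is noted that conjugate pairs define the same operation $\tr_L^K\circ\alpha^*$). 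The main obstacle is therefore not any deep argument but the care needed to verify this identification of indexing sets and to confirm that $\sigma^L$ indeed commutes with restriction along arbitrary continuous homomorphisms (established in the text after Construction~\ref{con-Omega^bullet}), so that the freely-generating set transported from Proposition~\ref{prop-pi_0 of Sigma^infty} is genuinely the stated set of transferred restricted tautological classes without repetitions.
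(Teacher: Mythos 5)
Your proposal is correct and follows essentially the same route as the paper: part (i) combines Proposition~\ref{prop-pi_0 of Sigma^infty} applied to $Y=\bL_{G,V}$ with the bijection $\Rep(L,G)\cong\pi_0^L(\bL_{G,V})$ from Theorem~\ref{thm-pi_0 of L_G} and the compatibility of stabilization with restriction, and part (ii) transports the basis through the evaluation isomorphism of Proposition~\ref{prop-B_gl represents}. The bookkeeping identification of $(K\times G)$-conjugacy classes of pairs $(L,\alpha)$ with pairs of a conjugacy class $(L)$ and a $W_KL$-orbit in $\Rep(L,G)$ is exactly the final step of the paper's argument as well.
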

\begin{proof}
(i) The map
\[  \Rep(K,G) \ \to \ \pi_0^K(\bL_{G,V})\ , \quad
[\alpha:K\to G] \ \longmapsto \ \alpha^*(u_{G,V})\]
is bijective according to Theorem~\ref{thm-pi_0 of L_G}.
Proposition~\ref{prop-pi_0 of Sigma^infty} thus says that
$\pi_0^K(\Sigma^\infty_+ \bL_{G,V})$
is a free abelian group with a basis given by the elements
\[ \tr_L^K(\sigma^L(\alpha^*(u_{G,V})))\ = \ 
 \tr_L^K(\alpha^*(\sigma^G(u_{G,V})))\ = \  \tr_L^K(\alpha^*(e_{G,V})) \]
where $L$ runs through all conjugacy classes of closed subgroups of $K$ 
with finite Weyl group and $\alpha$ runs through a set of representatives of the 
$W_KL$-orbits of the set $ \Rep(L,G)$.
The claim follows because $(K\times G)$-conjugacy classes of such pairs
$(L,\alpha)$ biject with pairs 
consisting of a conjugacy class of subgroups $(L)$ and
a $W_K L$-equivalence class in $\Rep(L,G)$.

(ii) We let $V$ be any faithful $G$-representation.
By part~(i) the composite
\[ \mZ\{[L,\alpha]\ |\ \ |W_KL| <\infty,\, \alpha:L\to G\}
\ \to \ \text{Nat}(\pi_0^G,\pi_0^K) 
\ \xra{\ \ev\ } \ \pi_0^K(\Sigma^\infty_+ \bL_{G,V})\]
is an isomorphism, where the first  map takes 
a conjugacy class~$[L,\alpha]$ to $\tr_L^K\circ\,\alpha^*$,
and the second map is evaluation at the
stable tautological class $e_{G,V}$.
The evaluation map is an isomorphism by 
Proposition~\ref{prop-B_gl represents}, so the first map
is an isomorphism, as claimed.  
\end{proof}

Theorem~\ref{thm-Burnside category basis}~(ii) is almost a complete calculation
of the global Burnside category, but one important piece of information is still missing:
how does one express the composite of two operations, each given in
the basis of Theorem~\ref{thm-Burnside category basis}, as a sum of basis elements?
Restrictions are contravariantly functorial 
and transfers are transitive, i.e., for every closed subgroup $K$ of $H$ we have
\[  \tr_H^G\circ\tr_K^H \ = \ \tr_K^G \ : \ \pi_0^K(X) \ \to \ \pi_0^G(X) \ . \]
So the key question is how to express a transfer 
followed by a restriction in terms of the specified basis.

Every group homomorphism is the composite of an epimorphism and a subgroup
inclusion.
Transfers commute with inflation (i.e., restriction along epimorphisms):
for every surjective continuous group homomorphism
$\alpha:K\to G$ and every subgroup $H$ of $G$ the relation
\[  \alpha^*\circ \tr_H^G \ = \ \tr_L^K \circ (\alpha|_L)^*    \]
holds as maps $\pi_0^H(X)\to \pi_0^K(X)$, where $L=\alpha^{-1}(H)$ and
$\alpha|_L:L\to H$ is the restriction of $\alpha$.
So the remaining issue is to rewrite the composite 
\[ \pi_0^H(X) \ \xra{\ \tr_H^G\ }\  \pi_0^G(X) \ \xra{\ \res^G_K\ }\  \pi_0^K(X)  \]
of a transfer map and a restriction map,
where~$H$ and~$K$ are two closed subgroups of a compact Lie group $G$.
The answer is given by the {\em double coset formula}:
\begin{equation}\label{eq:double_coset}
   \res^G_K\circ \tr_H^G \ = \ \sum_{[M]}\
\chi^\sharp(M)\cdot \tr_{K\cap{^g H}}^K  \circ c_g^* \circ \res^H_{K^g\cap H}\ .
\end{equation}
The double coset formula was proved by Feshbach for Borel cohomology theories~\cite[Thm.\,II.11]{feshbach} and later generalized to equivariant cohomology theories by 
Lewis and May~\cite[IV \S 6]{lms}. 
The sum in the double coset formula~\eqref{eq:double_coset}
runs over all connected components $M$ of orbit type manifolds,
the group element~$g\in G$ that occurs is such that $K g H\in M$,
and $\chi^\sharp(M)$ is the internal Euler characteristic of~$M$.
Only finitely many of the orbit type manifolds are non-empty,
so the double coset formula is a finite sum.

In this paper we only need the double coset formula when~$H$  
has finite index in~$G$, and then~\eqref{eq:double_coset} simplifies.
For any other subgroup $K$ of~$G$ the intersection $K\cap {^g H}$ 
then has finite index in $K$, so only finite index transfers are
involved in the double coset formula.
Since $G/H$ is finite, so is the set $K\bs G/H$
of double cosets, and all orbit type manifold components are points.
So all internal Euler characteristics that occur are~1 and 
the double coset formula specializes to
\[  \res^G_K\circ \tr_H^G \ = \ \sum_{[g]\in K\backslash G/H}\
\tr_{K\cap{^g H}}^K\circ c_g^* \circ \res^H_{K^g\cap H}  \ ; \]
the sum runs over a set of representatives of the $K$-$H$-double cosets.

The explicit description of the groups~$\bA(G,K)$
allows us to relate our notion of global functor
to other `global' versions of Mackey functors, which are typically introduced 
by specifying generating operations and relations between them.
For example, our category of global functors is equivalent to the
category of {\em functors with regular Mackey structure}
in the sense of Symonds~\cite[\S 3]{symonds-splitting}.

\begin{eg}\label{eg-global functor examples} 
We list some explicit examples of global functors; for more details we refer
to~\cite{schwede-global}.

(i) 
For every compact Lie group~$G$, the represented global functor $\bA(G,-)$ 
is realized by the suspension spectrum of a free orthogonal space~$\bL_{G,V}$,
by Proposition~\ref{prop-B_gl represents}.
In the special case $G=e$ of the trivial group we refer to this represented
global functor as the {\em Burnside ring global functor}
and denote it by~$\mA=\bA(e,-)$.
The value $\mA(K)$ at a compact Lie group $K$ is a free abelian group
with basis indexed by conjugacy classes of closed subgroups of~$K$ with finite Weyl group.
When $K$ is finite, then the Weyl group condition is
vacuous and $\mA(K)$ this is naturally isomorphic to the Burnside ring of~$K$,
compare Remark~\ref{rk-A^fin and A^c} below.

The Burnside ring global functor is realized by the sphere spectrum~$\mS$,
given by $\mS(V)=S^V$ with the canonical homeomorphisms $S^V\sm S^W\iso S^{V\oplus W}$
as structure maps.
The equivariant homotopy groups of the sphere spectrum are
thus the equivariant stable stems.
The action on the unit $1\in\pi_0(\mS)$ is an isomorphism of global functors
\[   \mA \ \xra{\ \iso \ } \ \upi_0(\mS) \]
from the Burnside ring global functor to the 0-th homotopy
global functor of the sphere spectrum. For finite groups, this is originally due
to Segal~\cite{segal-ICM}, and for general compact Lie groups 
to tom\,Dieck, as a corollary to his splitting theorem 
(see~Satz~2 and Satz~3 of~\cite{tomDieck-OrbittypenII}).

(ii)
Given an abelian group $M$, the {\em constant global functor}
is given by $\underline{M}(G)=M$ and all restriction maps
are identity maps. The transfer $\tr_H^G:\underline{M}(H)\to\underline{M}(G)$
is multiplication by the Euler characteristic of the homogeneous space~$G/H$. 
In particular, if $H$ is a subgroup of finite index of $G$, then
$\tr_H^G$ is multiplication by the index $[G:H]$.

(iii)
The {\em representation ring global functor}
$\bRU$ assigns to a compact Lie group $G$ the representation ring $\bRU(G)$, 
the Grothendieck
group of finite dimensional complex $G$-representations.
The fact that the representation rings form a global functor is classical
in the restricted realm of finite groups, but somewhat less familiar
for compact Lie groups in general.
The restriction maps $\alpha^*:\bRU(G)\to \bRU(K)$
are induced by restriction of representations along a homomorphism $\alpha:K\to G$.
The transfer $\tr_H^G:\bRU(H)\to \bRU(G)$ along a closed subgroup inclusion
$H\leq G$ is the {\em smooth induction} 
of Segal~\cite[\S\,2]{segal-representation}.
If $H$ has finite index in~$G$, then this induction 
sends the class of an $H$-representation~$V$ to the induced $G$-representation
$\map^G(H,V)$;
in general, induction may send actual representations to virtual representations.
In the generality of compact Lie groups, 
the double coset formula for $\bRU$ was proved by Snaith~\cite[Thm.\,2.4]{Snaith-Brauer}.
We show in~\cite{schwede-global} that the representation ring global functor $\bRU$
is realized by the periodic global $K$-theory spectrum. 

(iv)
Given any generalized cohomology theory $E$ (in the non-equivariant sense),
we can define a global functor $\underline{E}$  by setting
\[   \underline{E}(G) \ = \ E^0(B G) \ , \]
the 0-th $E$-cohomology of a classifying space of the group $G$.
The contravariant functoriality in group homomorphisms
comes from the covariant functoriality of classifying spaces.
The transfer maps for a subgroup inclusion $H\leq G$ 
comes from the stable transfer map 
\[   \Sigma^\infty_+ B G \ \to \  \Sigma^\infty_+ B H \ .\]
The double coset formula was proved 
in this context by Feshbach~\cite[Thm.\,II.11]{feshbach}.
The global functor $G\mapsto E^0(B G)$ is realized by a preferred global homotopy type:
in~\cite{schwede-global} we introduce a `global Borel theory' functor~$b$
from the non-equivariant stable homotopy category to the global stable homotopy
category such that the global functor $\upi_0(b E)$ is isomorphic to $\underline{E}$.
The functor $b$ is in fact right adjoint to the forget functor
from the global stable homotopy to the non-equivariant stable homotopy category.
\end{eg}

\begin{rk}\label{rk-A^fin and A^c}
The full subcategory $\bA^\text{fin}$ of the global Burnside category~$\bA$ spanned 
by {\em finite} groups has a different, more algebraic description, as we shall now
recall. 
This alternative description is often taken as the definition in
algebraic treatments of global functors.
We define an algebraic Burnside category $\bB$ whose objects are all finite groups. 
The abelian group $\bB(G,K)$ 
of morphisms from a group $G$ to $K$ is the Grothendieck
group of finite $K$-$G$-bisets where the right $G$-action is free.
Composition
\[ \circ \ : \ \bB(K,L) \times \bB(G,K) \ \to \ \bB(G,L) \]
is induced by the balanced product over $K$, i.e., it is the
biadditive extension of 
\[ (S,T) \ \longmapsto \ S\times_K T \ .  \]
Here $S$ has a left $L$-action and a commuting free right $K$-action,
whereas $T$ has a left $K$-action and a commuting free right $G$-action.
The balanced product $S\times_K T$ than inherits a left $L$-action from $S$
and a free right $G$-action from $T$.
Since the balanced product is associative up to isomorphism,
this defines a pre-additive category~$\bB$.

An isomorphism of pre-additive categories $\bA^\text{fin}\iso \bB$
is given by the identity on objects and by the group isomorphisms
$\bA^\text{fin}(G,K)\to \bB(G,K)$
sending a basis element $\tr_H^G\circ\,\alpha^*$ to the
class of the right free $K$-$G$-biset
\[ K\times_{(L,\alpha)} G \ = \ (K\times G) \, /\, (kl,g)\sim(k,\alpha(l)g) \ .\]
The category of `global functors on finite groups',
i.e., additive functors from $\bA^\text{fin}$ to abelian groups, is thus equivalent to
the category of {\em inflation functors} in the sense of~\cite[p.\,271]{webb}.
In the context of finite groups, these inflation functors 
and several other variations of the concept `global Mackey functor' 
have been much studied in algebra and representation theory.
\end{rk}

\begin{rk}\label{rk-homotopy properties}
In Remark~\ref{rk-pointset properties} we observed that only very special
kinds of orthogonal $G$-spectra are part of a `global family',
i.e., isomorphic to an orthogonal $G$-spectrum of the form $X_G$
for some orthogonal spectrum $X$. The previous obstructions were
in terms of pointset level conditions, and now we can also isolate
obstructions to `being global' in terms of the Mackey functor homotopy groups
of an orthogonal $G$-spectrum.

If we fix a compact Lie group $G$ and let $H$ run through
all closed subgroups of $G$, then the collection of~$H$-equivariant homotopy groups
$\pi_0^H(X)$ of an orthogonal spectrum $X$ forms a
{\em Mackey functor} for the group~$G$, with respect to
the restriction, conjugation and transfer maps.
One obstruction for a general orthogonal $G$-spectrum~$Y$ 
to `be global', i.e., equivariantly stably equivalent to $X_G$
for some orthogonal spectrum~$X$,
is that the $G$-Mackey functor $H\mapsto \pi_0^H(Y)$ can be extended to
a global functor.

An extension of a  $G$-Mackey functor to a global functor requires
us to specify values for groups that are not subgroups of $G$,
but it also imposes restrictions on the existing data.
In particular, the $G$-Mackey functor homotopy groups
can be complemented by restriction maps 
along arbitrary group homomorphisms between
the subgroups of $G$. As the extreme case this includes a restriction map
$p^*:\pi_*^e(X)\to \pi_*^G(X)$ associated to the unique homomorphism $p:G\to e$,
splitting the restriction map $\res^G_e:\pi_*^G(X)\to \pi_*^e(X)$. 
So one obstruction to being global is
that~$\res^G_e$ must be a splittable epimorphism.

Another point is that for an orthogonal spectrum~$X$
(as opposed to a general orthogonal $G$-spectrum),
the action of the Weyl group $W_G H$ on $\pi_0^H(X)$ factors 
through the outer automorphism group of $H$. 
In other words, if $g$ centralizes $H$, then $c_g^*$ is the identity of $\pi^H_0(X)$. 
The most extreme case of this is when $H=e$ is the trivial subgroup of $G$.
Every element of $G$ centralizes $e$, so for $G$-spectra 
of the form $X_G$, the conjugation maps on the value at the trivial subgroup
are all identity maps.
\end{rk}

\section{The global homotopy type of symmetric products}

Now we start the  equivariant analysis of the symmetric product filtration.
The main result is a global homotopy pushout square
of orthogonal spectra~\eqref{eq-ideal square},
showing that~$Sp^n$ can be obtained from~$Sp^{n-1}$ by coning off, 
in the global stable homotopy category, a certain morphism
from the suspension spectrum of~$B_{\gl}\Fc_n$.
Non-equivariantly, such a homotopy pushout square
was exhibited by Lesh,
see Theorem~1.1 and Proposition~7.4 of~\cite{lesh-filtration}.
In Theorem~\ref{thm-pi_0 Sp^n} we then 
exploit the Mayer-Vietoris sequence 
of the global homotopy pushout square to
calculate the global functors~$\upi_0(Sp^n)$ inductively.

We define an orthogonal space $C(B_{\gl}\Fc_n)$ by
\[ (C(B_{\gl}\Fc_n))(V) \ = \   D(V,n)/\Sigma_n\ ,  \]
where
\[ D(V,n)\ = \ \left\{ (v_1,\dots,v_n)\in V^n \ : \ 
\sum_{i=1}^n v_i=0\ , \ \sum_{i=1}^n |v_i|^2 \leq  1 \right\} \]
is the unit disc in the kernel of summation map.
Since a unit disc is the cone on the unit sphere, 
$C(B_{\gl}\Fc_n)$ is the unreduced cone of the global classifying space~$B_{\gl}\Fc_n$,
whence the notation. 
Next we define a certain morphism of orthogonal spectra
\[  \Phi\ : \  \Sigma^\infty_+ C(B_{\gl}\Fc_n) \to \  Sp^n  \]
that takes the orthogonal subspectrum $\Sigma^\infty_+ B_{\gl}\Fc_n$ to $Sp^{n-1}$,
and that can be thought of as a highly structured, parametrized Thom-Pontryagin
collapse map. I owe this construction to Markus Hausmann.
Before giving the details we try to explain the main idea.
For every inner product space~$V$, the map~$\Phi(V)$ 
has to assign to each tuple $(y_1,\dots,y_n)\in D(V,n)$
a based map $\Phi(V)(y_1,\dots,y_n):S^V\to Sp^n(V)$ 
that does not depend on the order of $y_1,\dots,y_n$.
We would like to take~$\Phi(V)(y_1,\dots,y_n)$ as the product of the
Thom-Pontryagin collapse maps in balls of sufficiently small
radius centered at the points $y_1,\dots,y_n$.
This would work fine for an individual inner product space~$V$,
but such maps would not form a morphism of orthogonal spectra
as $V$ increases. 

The fix to this problem is to combine the collapse maps with orthogonal
projection onto the subspace spanned by $y_1,\dots,y_n$.
However, this orthogonal projection does {\em not} depend continuously on
the tuple $y$ at those points where the dimension of the span of $y_1,\dots,y_n$ jumps.
So instead of the orthogonal projection to the span we use a certain
positive semidefinite self-adjoint endomorphism $P(y)$ of~$V$ 
that has similar features and varies continuously with~$y$.

\begin{construction}[Collapse maps]
We let~$V$ be an inner product space and denote by~$\sa^+(V)$
the space of positive semidefinite, self-adjoint endomorphisms of~$V$, i.e,
$\mR$-linear maps $F:V\to V$ that satisfy
\begin{itemize}
\item $\td{F(v),v} \geq 0$ for all $v\in V$, and 
\item $\td{F(v),w} = \td{v, F(w)}$ for all $v,w\in V$.
\end{itemize}
We note that~$\sa^+(V)$ is a convex
subset of $\text{End}(V)$, hence contractible. 
We fix the natural number $n\geq 2$ and
set the radius for the collapse maps to
\[ \rho \ = \ \frac{1}{2\cdot n^{3/2}} \ .\]
We define a scaling function
\[ s \ : \ [0,\rho) \ \to \ [0,\infty) \text{\qquad by\qquad} 
s(x)\ = \ x/(\rho-x)\ .\]
What matters is not the precise form of the function~$s$,
but only that it is a homeomorphism from $[0,\rho)$ to~$[0,\infty)$.
We define a {\em parametrized collapse map}
\[ c \ : \ \sa^+(V)\times S^V \ \to \ S^V \]
by
\[ c(F,v) \ = \ 
\begin{cases}
 v\ + \  s(|F(v)|)\cdot F(v)  & \text{ if $v\ne\infty$ and $|F(v)| < \rho$, and}\\
\quad \infty & \text{ else.}
\end{cases}\]
\end{construction}

\begin{lemma}\label{lemma-estimate} 
  \begin{enumerate}[\em (i)]
  \item For all $(F,v)\in \sa^+(V)\times S^V$ the relation~$|c(F,v)|\geq |v|$ holds.  
  \item The map $c$ is continuous.
  \end{enumerate}
\end{lemma}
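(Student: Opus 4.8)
The plan is to prove the two parts separately: part~(i) is a short norm computation, while part~(ii) is elementary but needs a little care at the locus where the defining formula for~$c$ degenerates.

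For part~(i), if $c(F,v)=\infty$ there is nothing to prove. Otherwise $v\neq\infty$ and $|F(v)|<\rho$, so with $\lambda=s(|F(v)|)\geq 0$ we have $c(F,v)=v+\lambda\cdot F(v)$. Expanding
\[ |c(F,v)|^2 \ = \ |v|^2 \ + \ 2\lambda\cdot\td{F(v),v} \ + \ \lambda^2\cdot|F(v)|^2 \]
and using $\lambda\geq 0$ together with $\td{F(v),v}\geq 0$, which holds since~$F$ is positive semidefinite, gives $|c(F,v)|^2\geq|v|^2$; taking square roots proves~(i). This is the only point where positivity of~$F$ enters.

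For part~(ii), I would first record that~$c$ is continuous on the open set $W\subseteq\sa^+(V)\times S^V$ of pairs $(F,v)$ with $v\neq\infty$ and $|F(v)|<\rho$: the set~$W$ is open because~$V$ is open in~$S^V$ and $(F,v)\mapsto|F(v)|$ is continuous, being the composite of the evaluation $\text{End}(V)\times V\to V$ (continuous since bilinear, in finite dimensions) with the norm; and on~$W$ the map~$c$ is the composite of this evaluation, the norm, the scaling function~$s$ (continuous on $[0,\rho)$), scalar multiplication and addition. So the only real task is to check continuity at a pair $(F_0,v_0)\notin W$, where $c(F_0,v_0)=\infty$. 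Since the sets $N_R=\{w\in V\ :\ |w|>R\}\cup\{\infty\}$, for $R>0$, form a neighborhood basis of~$\infty$ in~$S^V$, it suffices to produce, for each~$R$, a neighborhood~$U$ of $(F_0,v_0)$ on which $|c|>R$ throughout (with the convention $|\infty|=\infty$). If $v_0=\infty$, part~(i) settles this at once: on $U=\sa^+(V)\times N_R$ one has either $c(F,v)=\infty$, or $v\in V$ with $|v|>R$ and hence $|c(F,v)|\geq|v|>R$. If $v_0\in V$, then necessarily $|F_0(v_0)|\geq\rho$; I would choose $\delta\in(0,\rho/2)$ with $(\rho-\delta)^2/\delta>R$ and shrink to a neighborhood $U\subseteq\sa^+(V)\times V$ of $(F_0,v_0)$ on which $|F(v)|>\rho-\delta$, and then for $(F,v)\in U$ either $|F(v)|\geq\rho$, so $c(F,v)=\infty$, or $\rho-\delta<|F(v)|<\rho$ and the computation of part~(i) yields $|c(F,v)|\geq s(|F(v)|)\cdot|F(v)|=|F(v)|^2/(\rho-|F(v)|)>(\rho-\delta)^2/\delta>R$.

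The step I expect to be the only genuine obstacle is this last case, namely continuity across the ``seam'' $\{|F(v)|=\rho\}$ where~$c$ jumps to~$\infty$. It works because $s(x)\cdot x=x^2/(\rho-x)$ blows up as $x\to\rho^-$, and can be forced above any prescribed threshold by confining~$|F(v)|$ to a small left-neighborhood of~$\rho$; everything else is routine bookkeeping with the topology of the one-point compactification.
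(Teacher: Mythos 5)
Your proof is correct and follows essentially the same route as the paper: part~(i) is the same expansion of $|v+s(|F(v)|)F(v)|^2$ using positive semidefiniteness (you skip the paper's orthogonal decomposition of $v$ into image and kernel components of~$F$, which is an inessential simplification), and part~(ii) rests on the same two estimates — the bound of part~(i) near $v=\infty$ and the blow-up of $s(x)\cdot x$ as $x\to\rho^-$ across the seam $|F(v)|=\rho$ — phrased with neighborhoods of $\infty$ in $S^V$ rather than the paper's convergent sequences.
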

\begin{proof}
(i) There is nothing to show if $c(F,v)=\infty$, so we may assume that
$v\ne \infty$ and $|F(v)|<\rho$.
Since~$F$ is self-adjoint, $V$ is the orthogonal direct sum of image and kernel of~$F$. 
So we can write
\[ v\ =\ a\ +\ b\ ,  \]
where $a\in\text{im}(F)$, and $b\in\ker(F)$ and~$b$ is orthogonal to~im$(F)$.
The orthogonal decomposition
\begin{align*}
 c(F,v) \ &= \   v\ +\ s(|F(v)|)\cdot F(v)  \  
= \   \big(a +  s(|F(a)|)\cdot F(a) \big)\ +\  b
\end{align*}
allows us to conclude that
\begin{align*}
  |c(F,v)|^2 \ &=  \    \left|a + s(|F(a)|)\cdot F(a) \right|^2 + |b|^2  \\
&= \ |a|^2\ +\ 2 s(|F(a)|)\cdot \td{a, F(a)}  \ + \ s(|F(a)|)^2\cdot |F(a)|^2 \ +\  |b|^2
 \ \geq \ |a|^2\ +\ |b|^2\ = \ |v|^2\ .  
\end{align*}
The inequality uses that~$F$ is positive semidefinite.
Taking square roots proves the claim.

(ii) 
We consider a sequence $(F_k,v_k)$ that converges  in~$\sa^+(V)\times S^V$ to
a point $(F,v)$. 
We need to show that the sequence $c(F_k,v_k)$ converges to~$c(F,v)$ in~$S^V$.  
If $v=\infty$, then $|v_k|$ converges to~$\infty$, 
hence so does $c(F_k,v_k)$ by part~(i). 
So we suppose that~$v\ne\infty$ for the rest of the proof.
Then we can assume without loss of generality that $v_k\ne\infty$ for all~$k$.
We distinguish three cases.

Case 1: $|F(v)|<\rho$.
Then~$|F_k(v_k)|<\rho$ for almost all~$k$.
So~$c(F_k,v_k)$ converges to $c(F,v)$ because the formula 
in the definition of~$c$ is continuous in both parameters~$F$ and~$v$.
 
Case 2: $|F(v)|=\rho$.
If $|F_k(v_k)|\geq\rho$, then $c(F_k,v_k)=\infty$.
Otherwise
\begin{align*}
 |c(F_k,v_k)|\ &= \ \left|v_k\ + \   s(|F_k(v_k)|)\cdot F_k(v_k) \right| \
\geq \   s(|F_k(v_k)|)\cdot|F_k(v_k)|\ -\  | v_k| \ .
\end{align*}
The sequences $|v_k|$ and $|F_k(v_k)|$ converge 
to the finite numbers $|v|$ respectively $\rho$;
on the other hand, $s(|F_k(v_k)|)$ converges to~$\infty$.
So the sequence $|c(F_k,v_k)|$ also converges to~$\infty$,
which means that the sequence~$c(F_k,v_k)$
converges to~$\infty=c(F,v)$.

Case 3: $|F(v)|>\rho$.
Then $|F_k(v_k)|>\rho$ for almost all~$k$.
So~$c(F_k,v_k)=\infty$ for almost all~$k$, and this sequence
converges to~$c(F,v)=\infty$.
\end{proof}

\begin{construction}
We define a continuous map 
\[ P \ : \ V^n \ \to \ \sa^+(V) \text{\qquad by\qquad}
P(y)(v)\ = \ P(y_1,\dots,y_n)(v)\ = \ \sum_{j=1}^n \td{v,y_j}\cdot y_j\ .\]
Each of the summands $\td{-,y_j}\cdot y_j$ is self-adjoint, hence
so is $P(y)$. Because
\[ \td{P(y)(v),v} \ = \  \sum_{j=1}^n \td{v,y_j}^2\ \geq \ 0 \]
the map $P(y)$ is also positive semidefinite.
If the family $(y_1,\dots,y_n)$ happens to be orthonormal, then
$P(y)$ is the orthogonal projection onto the span of $y_1,\dots,y_n$.
In general, $P(y)$ need not be idempotent, 
but its image is always the span of the vectors $y_1,\dots,y_n$,
and hence its kernel is the orthogonal complement of that span.

For every linear isometric embedding $\varphi:V\to W$ 
and an endomorphism $F\in \sa^+(V)$, we define $^\varphi F\in\sa^+(W)$ by
`conjugation and extension by~0', i.e., we set
\[ (^\varphi F)(\varphi(v)+w)\ = \  \varphi(F(v)) \]
for all $(v,w)\in V\times\varphi^\perp$. Then
\begin{equation}\label{eq:varphi_past_P}
  ^\varphi ( P(y))(\varphi(v)+w)\ = \ 
  \varphi( P(y)(v))\ = \  \sum_{j=1}^n \td{\varphi(v),\varphi(y_j)}\cdot \varphi(y_j)\
= \  P(\varphi(y))(\varphi(v)+w)\ ,
\end{equation}
i.e., ${^\varphi}( P(y))=  P(\varphi(y))$ as endomorphisms of~$W$.

It will be convenient to extend the meaning of the minus symbol
and allow to subtract a vector from infinity.
We define a continuous map
\[ \ominus\ : \ S^V\times V\ \to\  S^V \text{\qquad by\qquad}
 v \ominus z \ = \  
\begin{cases}
v - z & \text{\ for $v\ne\infty$, and}\\  
\quad \infty& \text{\ for $v=\infty$.}
\end{cases} \]
We emphasize that only the first argument of the operator $\ominus$
is allowed to be infinity; in particular, 
we cannot subtract~$\infty$ from itself.
We define a continuous map
\[ \tilde\Phi(V) \ : \ D(V,n)\times S^V \ \to \ Sp^n(S^V) \text{\qquad by\qquad}
 \tilde\Phi(V)(y,v) \ = \ 
[c(P(y),v\ominus y_1),\dots,c(P(y),v\ominus y_n)] \ .\]
The map~$\tilde\Phi(V)$ sends $D(V,n)\times \{\infty\}$ to the basepoint.
For every permutation~$\sigma\in\Sigma_n$ we have
$P(y\cdot\sigma)=P(y)$ and hence
\[ \tilde\Phi(V)(y\cdot\sigma,v)\ = \ \tilde\Phi(V)(y,v)  \ .\]
So~$\tilde\Phi(V)$ factors over a continuous map
\[ \Phi(V)\ : \  (C(B_{\gl}\Fc_n)(V))_+\sm S^V \ = \ 
(D(V,n)/\Sigma_n)_+ \sm S^V \ \to \ Sp^n(S^V)\ .\]
\end{construction}

\begin{lemma}
As~$V$ varies over all inner product spaces,  
the maps $\Phi(V)$ form a morphism of orthogonal spectra
\[  \Phi\ : \ \Sigma^\infty_+ C(B_{\gl}\Fc_n)\ \to \  Sp^n \ . \]
\end{lemma}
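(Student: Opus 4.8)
The plan is to verify that the maps $\Phi(V)$ assemble into a natural transformation of based continuous functors $\bO \to \bU_*$, i.e.\ that they are compatible with the structure maps of $\Sigma^\infty_+ C(B_{\gl}\Fc_n)$ and of $Sp^n$. Since $\Sigma^\infty_+ C(B_{\gl}\Fc_n)$ is a suspension spectrum, it suffices to check compatibility with the action of $\bO(V,W)$; and because the structure maps of a suspension spectrum are generated by (a) the isometry action of $O(V)$ and (b) the stabilization maps $\sigma_{V,W}$ along an inclusion $V \to V\oplus W$, it is enough to treat these two cases separately. Continuity of each $\Phi(V)$ has essentially already been arranged: $c$ is continuous by Lemma~\ref{lemma-estimate}(ii), $P$ and $\ominus$ are visibly continuous, so $\tilde\Phi(V)$ is continuous, and it descends to the quotient $\Phi(V)$ because $P(y\cdot\sigma)=P(y)$.

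First I would check $O(V)$-equivariance. For $A\in O(V)$ one has $P(A_*(y)) = A\circ P(y)\circ A^{-1}$ as a direct computation from the defining formula for $P$, and $c(A F A^{-1}, Av) = A\,c(F,v)$ since $c$ is built from $v$, $|F(v)|$, $F(v)$ by operations commuting with the isometry $A$ (note $|AFA^{-1}(Av)| = |AF(v)| = |F(v)|$). Combining these with $A(v\ominus y_i) = Av \ominus Ay_i$ and the $O(V)$-equivariance of the quotient map $V^n \to Sp^n(V)$ gives $\Phi(V)(A\cdot(y,v)) = A\cdot \Phi(V)(y,v)$, which is exactly equivariance for the $O(V)_+ = \bO(V,V)$ part of the structure. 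Second I would check compatibility with a stabilization map along the inclusion $i\colon V \to V\oplus W$. Here the key identity is~\eqref{eq:varphi_past_P}: ${}^{i}(P(y)) = P(i_*(y))$, so that on $V\oplus W$ the endomorphism $P(i_*(y))$ acts as $P(y)$ on $V$ and kills $W$. One then computes that for $w\in S^W$, the vector $c(P(i_*y), (v,w)\ominus (y_i,0))$ equals $(c(P(y), v\ominus y_i), w)$: the $W$-coordinate of $P(i_*y)$ applied to anything is zero, so the collapse map leaves the $W$-coordinate untouched and acts on the $V$-coordinate exactly as $c(P(y),-)$. Passing to $Sp^n$ and comparing with the definition of $\sigma_{V,W}$ for the orthogonal spectrum $Sp^n$ (which smashes in $S^W$ via the diagonal inclusion of $W$ into each of the $n$ factors) shows the square commutes.

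The step I expect to be the main obstacle is the bookkeeping in the stabilization case, making precise the identification of $(C(B_{\gl}\Fc_n)(V)_+ \sm S^V)\sm S^W$ with $C(B_{\gl}\Fc_n)(V\oplus W)_+ \sm S^{V\oplus W}$ restricted to tuples $y\in D(V,n) \subseteq D(V\oplus W,n)$, and tracking how the summation constraint $\sum y_i = 0$ and the quotient by $\Sigma_n$ interact with the inclusion $D(V,n)\hookrightarrow D(V\oplus W,n)$; but this is routine once~\eqref{eq:varphi_past_P} is in hand. A final remark is that since $\Sigma^\infty_+$ is a functor from orthogonal spaces to orthogonal spectra, it suffices to produce the stabilization-compatibility on the generators $\bL_{G,V}$-level structure maps, so one really only needs: for every linear isometric embedding $\varphi\colon V\to W$ the diagram relating $\Phi(V)$, $\Phi(W)$ and the maps $C(B_{\gl}\Fc_n)(\varphi)$, $Sp^n(\varphi)$, and the suspension coordinates commutes — and that follows from the two cases above together with the functoriality $\varphi = (\text{isometry})\circ(\text{inclusion})$ after identifying $W \cong V \oplus \varphi^\perp$. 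I would close by noting that $\Phi$ restricts on the subspectrum $\Sigma^\infty_+ B_{\gl}\Fc_n \subseteq \Sigma^\infty_+ C(B_{\gl}\Fc_n)$ to a morphism landing in $Sp^{n-1}\subseteq Sp^n$, since for $y$ in the boundary sphere $S(V,n)$ (so $\sum|y_i|^2 = 1$) the span of the $y_i$ has dimension at most $n-1$ — because $\sum y_i = 0$ forces a linear dependence — hence $P(y)$ has rank $\le n-1$, and one checks that at most $n-1$ of the points $c(P(y), v\ominus y_i)$ are ever distinct from $\infty$ simultaneously, so $\tilde\Phi(V)(y,v)$ lies in $Sp^{n-1}(S^V)$.
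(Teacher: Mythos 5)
Your main argument is correct and is essentially the paper's proof: the whole content is the identity $c({}^\varphi F,\varphi(v)+w)=\varphi(c(F,v))+w$ together with ${}^\varphi(P(y))=P(\varphi_*(y))$, and whether one verifies this for a general point $(w,\varphi)$ of $\bO(V,W)$ in one computation (as the paper does) or splits it into an isometry case and an inclusion-plus-translation case (as you do) is immaterial; your handling of the sphere coordinate $w\in S^W$ in the second case is the same computation.

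Your closing remark, however, is not validly argued. That $\Phi$ carries $\Sigma^\infty_+ B_{\gl}\Fc_n$ into $Sp^{n-1}$ does not follow from $P(y)$ having rank at most $n-1$: the rank of $P(y)$ says nothing about whether the $n$ conditions $|P(y)(v-y_i)|<\rho$ can hold simultaneously for some $v$. What is actually needed is a metric estimate --- for $y\in S(V,n)$ one produces indices $i\ne j$ with $|P(y)(y_i)-P(y)(y_j)|\geq 2\rho$, so that every $v$ has distance at least $\rho$ from $P(y)(y_i)$ or from $P(y)(y_j)$, forcing the corresponding coordinate to be the basepoint. This is Lemma~\ref{lemma-land in Sp^n-1} (at $t=0$), and it uses $\sum_i|y_i|^2=1$ together with the specific choice of $\rho$, not a dimension count. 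Since this containment is what makes $\Psi$ well defined, the gap matters downstream even though it lies outside the statement of the present lemma.
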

\begin{proof}
For every linear isometric embedding $\varphi:V\to W$,
every $F\in\sa^+(V)$ and all~$(v,w)\in V\times \varphi^\perp$
with $|F(v)|<\rho$ we have
\begin{align}\label{eq-c-phi-relation}
  c(^\varphi F,\varphi(v)+w)\ &= \ 
 (\varphi(v)+w)\ + \ s(|(^\varphi F)(\varphi(v)+w)|)\cdot (^\varphi F)(\varphi(v)+w) \nonumber\\
&= \ 
 \varphi(v) + w \ +  \  s(|\varphi( F(v))|)\cdot \varphi(F(v)) 
\ = \ \varphi(c(F,v)) + w 
\end{align}
in~$S^W$.
Hence for all $y\in D(V,n)$,
\begin{align*}
c(P(\varphi(y)), (\varphi(v)+w )\ominus \varphi(y_i)) 
\ &=_\eqref{eq:varphi_past_P}  \  
c\left(^\varphi(P(y)), ( \varphi(v)+ w)\ominus \varphi(y_i) \right) \\
\ &= \qquad
c\left(^\varphi(P(y)),  \varphi(v\ominus y_i)+ w \right) \\
&=_\eqref{eq-c-phi-relation}\ 
\varphi(c(P(y),v\ominus y_i)) + w \ .
\end{align*}
This shows that the square
\[ 
 \xymatrix@C=5mm{
\bO(V,W)\times D(V,n)\times  S^V \ar[rrrr]^-{\bO(V,W)\times\tilde\Phi(V)} 
\ar[d] &&&& 
\bO(V,W)\times Sp^n(S^V) \ar[d] & ((w,\varphi),[v_1,\dots,v_n]) \ar@{|->}[d]\\
D(W,n)\times S^W \ar[rrrr]_-{\tilde\Phi(W)} &&&& Sp^n(S^W) & 
[w+\varphi(v_1),\dots,w+\varphi(v_n)]} \]
commutes, where the vertical maps are the structure maps.
\end{proof}

We claim that the morphism~$\Phi$ takes the orthogonal subspectrum
$\Sigma^\infty_+ (B_{\gl}\Fc_n)$ into the subspectrum~$Sp^{n-1}$. 
We will need that kind of argument again later, so we formulate it more generally.

\begin{lemma}\label{lemma-land in Sp^n-1}
Let $V$ be an inner product space and $y\in S(V,n)$.
For $t\in[0,1]$ we define $F_t\in\sa^+(V)$ by $F_t=(1-t)\cdot P(y)+t\cdot\Id_V$.
Then for every $v\in S^V$ the point
\[ [c(F_t,v\ominus y_1),\dots,c(F_t,v\ominus y_n)] \ \in \ Sp^n(S^V)\]
belongs to the subspace~$Sp^{n-1}(S^V)$.
\end{lemma}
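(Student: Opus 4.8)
The plan is to first unwind what membership in the subspace $Sp^{n-1}(S^V)\subseteq Sp^n(S^V)$ means: it consists precisely of the unordered $n$-tuples in $S^V$ at least one of whose coordinates is the basepoint $\infty$. So I need to show that for all $y\in S(V,n)$, $t\in[0,1]$ and $v\in S^V$, at least one of the points $c(F_t,v\ominus y_i)$ equals $\infty$. Note first that $F_t=(1-t)\cdot P(y)+t\cdot\Id_V$ lies in $\sa^+(V)$, since that set is convex and contains both $P(y)$ and $\Id_V$, so the collapse map applies to it. If $v=\infty$, then $v\ominus y_i=\infty$ for every $i$, hence $c(F_t,v\ominus y_i)=\infty$ and the whole tuple is the basepoint; so I may assume $v\in V$, and then $v\ominus y_i=v-y_i$.

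I would then argue by contradiction, assuming $|F_t(v-y_i)|<\rho$ for all $i$, so that no coordinate is $\infty$. Summing the identities $F_t(v-y_i)=F_t(v)-F_t(y_i)$ over $i$ and using $\sum_i y_i=0$ gives $\sum_i F_t(v-y_i)=n\cdot F_t(v)$, so $|F_t(v)|<\rho$; hence $|F_t(y_i)|\le|F_t(v)|+|F_t(v-y_i)|<2\rho$ for each $i$, and with $\rho=1/(2n^{3/2})$ this produces the upper bound
\[ \sum_{i=1}^n |F_t(y_i)|^2 \ < \ n\cdot(2\rho)^2 \ = \ \frac{1}{n^2}\ . \]

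For the matching lower bound I would use the positivity and self-adjointness of $F_t$ together with the sphere condition $\sum_i|y_i|^2=1$. Expanding $\td{F_t(y_i),y_i}=(1-t)\sum_j\td{y_i,y_j}^2+t|y_i|^2$, summing over $i$, and using the elementary estimates $\sum_{i,j}\td{y_i,y_j}^2\ge\sum_i|y_i|^4\ge\tfrac1n(\sum_i|y_i|^2)^2=\tfrac1n$ gives $\sum_i\td{F_t(y_i),y_i}\ge\tfrac{1-t}{n}+t\ge\tfrac1n$. Then Cauchy--Schwarz together with $\sum_i|y_i|^2=1$ yields
\[ \frac1n \ \le\ \sum_{i=1}^n\td{F_t(y_i),y_i} \ \le\ \sum_{i=1}^n |F_t(y_i)|\,|y_i| \ \le\ \Bigl(\sum_{i=1}^n|F_t(y_i)|^2\Bigr)^{1/2}\ , \]
so $\sum_i|F_t(y_i)|^2\ge 1/n^2$, contradicting the previous display. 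Hence some $c(F_t,v-y_i)$ must equal $\infty$, so the tuple lies in $Sp^{n-1}(S^V)$.

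The two estimate chains are routine; the hard part — really the only point requiring any care — is to play them off against each other with the right constant. It is exactly this numerical comparison, the strict bound $\sum_i|F_t(y_i)|^2<n\cdot(2\rho)^2$ versus the lower bound $\sum_i|F_t(y_i)|^2\ge 1/n^2$, that forces the choice $\rho=1/(2n^{3/2})$ fixed in the construction of the collapse maps, and making this numerology close is the entire content of the lemma.
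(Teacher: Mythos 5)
Your proof is correct, but it takes a genuinely different route from the paper's. The paper argues directly: it picks an index $i$ with $|y_i|^2\geq 1/n$, shows via Cauchy--Schwarz and the convex combination formula that $|F_t(y_i)|\geq 2\rho$, and then uses $\sum_j F_t(y_i-y_j)=n\cdot F_t(y_i)$ to produce a second index $j$ with $|F_t(y_i)-F_t(y_j)|\geq 2\rho$; since $F_t(v)$ cannot lie within $\rho$ of both $F_t(y_i)$ and $F_t(y_j)$, one of the two corresponding coordinates is already at infinity. You instead argue by contradiction with aggregate estimates: assuming all $n$ coordinates are finite, the zero-sum condition forces $|F_t(v)|<\rho$ and hence $|F_t(y_i)|<2\rho$ for every $i$, giving $\sum_i|F_t(y_i)|^2<n(2\rho)^2=1/n^2$, which you then contradict with the lower bound $\sum_i|F_t(y_i)|^2\geq 1/n^2$ obtained from $\sum_i\langle F_t(y_i),y_i\rangle\geq \tfrac{1-t}{n}+t\geq\tfrac1n$ and Cauchy--Schwarz. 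Both arguments rest on the same three ingredients (the sphere condition $\sum|y_i|^2=1$, the relation $\sum y_i=0$, and the choice $\rho=1/(2n^{3/2})$), and your numerology is tight in exactly the same way. The paper's version is constructive --- it identifies \emph{which} coordinates land at infinity --- and its ``two far-apart points'' device reappears in the proof of Theorem~\ref{thm-main homotopy}, where one needs indices $i,j$ with $|y_i-y_j|\geq 4\rho$; your existential version proves the lemma just as well but would not transport to that later argument without modification.
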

\begin{proof}
Since $\sum_{i=1}^n|y_i|^2=1$
there is at least one~$i\in\{1,\dots,n\}$ with $|y_i|^2\geq 1/n$.
The Cauchy-Schwarz inequality gives
\begin{align*}
 |y_i|\cdot |F_t(y_i)| \ \geq \ |\td{y_i, F_t(y_i)}|
\ &= \   t \td{y_i,y_i}\ + \ (1-t)\sum_{j=1}^n \td{y_i,y_j}^2
\ \geq \ 
 t|y_i|^2\ + \ (1-t)|y_i|^4 \ .
\end{align*}
Dividing by $|y_i|$ yields
\begin{align*}
  |F_t(y_i)| \ \geq \  t|y_i|\ + \ (1-t)|y_i|^3 \ \geq 
\  \frac{t}{n^{1/2}}\ + \ \frac{1-t}{n^{3/2}}\ \geq \ \frac{1}{n^{3/2}}\ =\ 2\rho \ .
\end{align*}
The relation
\begin{align*}
  \sum_{j=1}^n | F_t(y_i-y_j)| \ \geq \ 
  \left| \sum_{j=1}^n  F_t(y_i-y_j)\right | \ =  \ 
  \left|n\cdot F_t(y_i)- F_t(y_1+\dots+y_n)\right|\ = \     n |F_t(y_i) | 
\end{align*}
shows that there is a $j\in\{1,\dots,n\}$ such that
\[  | F_t(y_i)-F_t(y_j)| \ = \    | F_t(y_i-y_j)| \ \geq \ |F_t(y_i) | \ \geq \ 2\rho\ . \]
So every $v\in V$ has distance at least~$\rho$ from $F_t(y_i)$ or
from $F_t(y_j)$. Hence~$c(F_t,v\ominus y_i)$ or~$c(F_t,v\ominus y_j)$ 
is the basepoint at infinity of $S^V$.
\end{proof}

For $t=0$, Lemma~\ref{lemma-land in Sp^n-1} 
shows that for every $v\in S^V$, the point
\[ \Phi(V)(y\cdot \Sigma_n, v)\ = \ 
[c(P(y),v\ominus y_1),\dots,c(P(y),v\ominus y_n)] \]
belongs to the subspace~$Sp^{n-1}(S^V)$.
So the map $\Phi(V)$ takes the subspace 
$(\Sigma^\infty_+ B_{\gl}\Fc_n)(V)$ to~$Sp^{n-1}(S^V)$. 
We denote by
\[  \Psi\ : \ \Sigma^\infty_+ B_{\gl} \Fc_n\ \to \  Sp^{n-1}  \]
the restriction of the morphism~$\Phi:\Sigma^\infty_+ C(B_{\gl} \Fc_n)\to Sp^n$
to the suspension spectrum of~$B_{\gl}\Fc_n$.
The two vertical maps in the following commutative square~\eqref{eq-ideal square}
are levelwise equivariant h-cofibrations. 
So the following theorem effectively says
that the square is a global homotopy pushout.

\begin{theorem}\label{thm-main homotopy} 
The morphism induced on vertical quotients by the commutative square
of orthogonal spectra
\begin{equation}  \begin{aligned}\label{eq-ideal square}
 \xymatrix{
\Sigma^\infty_+ B_{\gl}\Fc_n \ar[r]^-{\Psi}\ar[d] & Sp^{n-1}\ar[d]\\
\Sigma^\infty_+ C(B_{\gl}\Fc_n) \ar[r]_-\Phi & Sp^n }     
 \end{aligned}\end{equation}  
is a global equivalence.
\end{theorem}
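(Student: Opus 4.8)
The plan is to prove the formally stronger assertion that $\Phi$ induces a \emph{level} equivalence on vertical cofibers: for every inner product space $V$ the based map
\[
\bar\Phi(V)\ :\ \bigl((D(V,n)/\Sigma_n)/(S(V,n)/\Sigma_n)\bigr)\sm S^V \ \to \ Sp^n(S^V)/Sp^{n-1}(S^V)
\]
induced by $\Phi(V)$ is an $O(V)$-equivariant homotopy equivalence; here $[y]$ denotes the $\Sigma_n$-orbit of $y\in D(V,n)$. Granting this, for every $G$-representation $V$ the map $\bar\Phi(V)$ is in particular a $G$-weak equivalence (restricting the action along $G\to O(V)$), so the map on vertical cofibers is a $G$-weak equivalence on all representations; since each $S^{k+V}$ is a $G$-CW complex, the colimit in~\eqref{eq:defined_pi_0^G} is then a filtered colimit of bijections, and the induced map is a global equivalence, which is the claim. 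Note that source and target are abstractly homeomorphic: the Remark identifies $\ker(\mathrm{sum}\colon V^n\to V)$ with $V\tensor\nu_n$, so $D(V,n)/S(V,n)\iso S^{V\tensor\nu_n}$ as $\Sigma_n$-spaces, and
\[
\bigl((D(V,n)/\Sigma_n)/(S(V,n)/\Sigma_n)\bigr)\sm S^V \ \iso\ (S^{V\tensor\nu_n}/\Sigma_n)\sm S^V\ \iso\ (S^V)^{\sm n}/\Sigma_n\ \iso\ Sp^n(S^V)/Sp^{n-1}(S^V);
\]
the content is that $\bar\Phi(V)$ realizes this identification up to $O(V)$-homotopy.

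\emph{Step 1: straightening the collapse.} For $t\in[0,1]$ put $F_t=(1-t)\cdot P(y)+t\cdot\Id_V\in\sa^+(V)$. By Lemma~\ref{lemma-land in Sp^n-1}, whenever $y\in S(V,n)$ the point $[c(F_t,v\ominus y_1),\dots,c(F_t,v\ominus y_n)]$ lies in $Sp^{n-1}(S^V)$ for every $t$ and $v$; since moreover $P(y\sigma)=P(y)$ and $c(F_t,\infty)=\infty$, the rule $([y],v)\mapsto[c(F_t,v\ominus y_1),\dots,c(F_t,v\ominus y_n)]$ descends, for each $t$, to a based $O(V)$-map on the vertical cofiber, yielding an $O(V)$-homotopy from $\bar\Phi(V)$ (the value at $t=0$, where $F_0=P(y)$) to the map $\bar\Phi_1(V)$ built from $F_1=\Id_V$. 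We do \emph{not} require this homotopy to be a homotopy of morphisms of orthogonal spectra --- only the $O(V)$-action on the single space $Sp^n(S^V)$ is used --- so the fact that conjugating $\Id_V$ past an isometric embedding produces a projection rather than an identity is irrelevant here. Now $\bar\Phi_1(V)$ is the descent of $([y],v)\mapsto[g(v\ominus y_1),\dots,g(v\ominus y_n)]$ with $g=c(\Id_V,-)\colon S^V\to S^V$ the ordinary Thom--Pontryagin collapse onto the ball of radius~$\rho$.

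\emph{Step 2: $\bar\Phi_1(V)$ as a parametrized Thom collapse.} The point $\bar\Phi_1(V)([y],v)$ is a non-basepoint precisely when $|v-y_i|<\rho$ for all $i$; call $U$ the corresponding open subset of the cofiber $Q(V)$. Because $\rho=\tfrac{1}{2n^{3/2}}$ is so small, on the preimage of $U$ the defining inequality $\sum_i|y_i|^2<1$ of $D(V,n)$ is automatic once all $|v-y_i|<\rho$ (the Cauchy--Schwarz estimate in the proof of Lemma~\ref{lemma-land in Sp^n-1} bounds $\sum_i|y_i|^2$ by $n(2\rho)^2<1$). Hence the linear isomorphism $\ker(\mathrm{sum})\oplus\Delta(V)\xra{\iso}V^n$, $(y,v)\mapsto(v\ominus y_i)_i$, restricts to a $\Sigma_n$-homeomorphism of the preimage of $U$ onto the open product ball $(B_\rho(0))^n$; composing coordinatewise with the radial homeomorphism $g|_{B_\rho(0)}\colon B_\rho(0)\xra{\iso}V$ and passing to $\Sigma_n$-orbits exhibits $\bar\Phi_1(V)|_U$ as an $O(V)$-homeomorphism from $U$ onto $Sp^n(S^V)/Sp^{n-1}(S^V)$ minus its basepoint. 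Since $\bar\Phi_1(V)$ sends $Q(V)\setminus U$ to the basepoint and $Q(V)$ is compact, $\bar\Phi_1(V)$ factors as $Q(V)\xra{q}Q(V)/(Q(V)\setminus U)\xra{\iso}Sp^n(S^V)/Sp^{n-1}(S^V)$, the second map being a continuous bijection from a compact to a Hausdorff space, hence a homeomorphism.

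\emph{Step 3, and the expected main obstacle.} It now suffices to show that $q$ is an $O(V)$-homotopy equivalence, i.e.\ that the closed subspace $Q(V)\setminus U$ is $O(V)$-equivariantly contractible and cofibrantly included; then $\bar\Phi_1(V)$, hence $\bar\Phi(V)$, is an $O(V)$-homotopy equivalence. This vanishing locus is the basepoint together with those pairs $([y],v)$ for which $v$ lies outside the convex body $\bigcap_i B_\rho(y_i)$ --- including all $([y],v)$ for which that intersection is empty. The intended contraction pushes $v$ radially to $\infty$ away from an $O(V)$-equivariantly chosen interior point of $\bigcap_i B_\rho(y_i)$, e.g.\ its Chebyshev centre, when that intersection is non-empty, and straight out to $\infty$ when it is empty; convexity guarantees such a ray never re-enters $\bigcap_i B_\rho(y_i)$, so the homotopy stays in $Q(V)\setminus U$. \textbf{The delicate point} --- the one I expect to be the main obstacle --- is to carry this out continuously across the wall where $\bigcap_i B_\rho(y_i)$ degenerates from a solid body to the empty set, which will need either a careful interpolation of the pushing directions there or the replacement of $U$ by a slightly smaller open set equipped with an honest collar. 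All remaining ingredients are formal or are direct computations with the estimates of Lemma~\ref{lemma-estimate} and Lemma~\ref{lemma-land in Sp^n-1}, and since every construction that appears ($P(y)$, $c$, $g$, $F_t$, the balls $B_\rho(y_i)$ and their Chebyshev centres) is natural for isometries of $V$, the required $O(V)$-equivariance is automatic; passing to underlying non-equivariant spectra recovers Lesh's homotopy pushout.
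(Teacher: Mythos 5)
Your Steps 1 and 2 are sound: the homotopy $F_t=(1-t)\cdot P(y)+t\cdot\Id_V$ does descend to the vertical quotients by Lemma~\ref{lemma-land in Sp^n-1}, and your identification of $\bar\Phi_1(V)$ as the collapse $Q(V)\to Q(V)/(Q(V)\setminus U)$ followed by a homeomorphism is correct (including the observation that $|v-y_i|<\rho$ for all $i$ forces $|v|<\rho$ and $\sum_i|y_i|^2<n(2\rho)^2<1$). But the argument is not complete, and the missing piece is exactly the step you flag yourself: you must show that $Q(V)\setminus U$ is $O(V)$-equivariantly contractible \emph{and} that its inclusion is an equivariant cofibration, neither of which you prove. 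This is not a routine verification --- it is the entire geometric content of the theorem. Your sketched contraction (radial push away from a Chebyshev centre) breaks down not only across the wall where $\bigcap_i B_\rho(y_i)$ becomes empty but also at points where $v$ coincides with the chosen centre while the intersection is empty or degenerate, so the pushing direction is undefined; and continuity of the contraction at the basepoint of $Q(V)$ (as $|y|\to 1$ or $v\to\infty$) requires estimates of the same kind you would need anyway. The cofibration hypothesis is also unaddressed. As it stands, the proof has a genuine gap.

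It is worth contrasting this with how the paper avoids the issue entirely. Instead of deforming $P(y)$ \emph{up} to $\Id_V$ (which turns $\Phi(V)/\Psi(V)$ into a genuine collapse-onto-an-open-set, forcing you to contract its complement), the paper deforms $P(y)$ \emph{down} to $0$ while simultaneously rescaling the collapse centres, via $G_i(t,y,v)=c\bigl((1-t)\cdot P(y),\, v\ominus \tfrac{y_i}{1-t|y|}\bigr)$. At $t=1$ the operator is $0$, so $c(0,-)$ never hits $\infty$ and the end map is $(y\cdot\Sigma_n,v)\mapsto \bigl(v-\tfrac{y_1}{1-|y|}\bigr)\sm\dots\sm\bigl(v-\tfrac{y_n}{1-|y|}\bigr)$; the centres escape to infinity as $|y|\to 1$, so this extends to $\bar D(V,n)/\Sigma_n\sm S^V$ and is a continuous \emph{bijection} from a compact space to a Hausdorff space, hence a homeomorphism. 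No contractibility of a complement and no cofibration check are needed; the only delicate point is the continuity of the smashed-together homotopy $\bar G$ at $\{1\}\times S(V,n)\times S^V$, which is settled by the elementary estimate that some pair satisfies $|y_i-y_j|\geq 4\rho$ when $|y|=1$. If you want to salvage your route, you would need to supply an honest equivariant contraction of $Q(V)\setminus U$ together with the h-cofibration property; but the paper's reparametrization is the cleaner fix, and I recommend adopting it.
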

\begin{proof}
We show that for every inner product space~$V$ the map
\[\Phi(V)/\Psi(V) \ : \  
\bar D(V,n)/\Sigma_n \sm S^V \ \to \ Sp^n(S^V)/Sp^{n-1}(S^V)\]
is $O(V)$-equivariantly based homotopic to an equivariant homeomorphism,
where $\bar D(V,n)=D(V,n)/S(V,n)$.
We define continuous maps
\[ G_i\ : \  
\big(   [0,1]\times D(V,n)\ \bs \ \{1\}\times S(V,n) \big) \times S^V  
\ \to \ S^V \]
for $1\leq i\leq n$ by
\[ G_i(t,y,v) \ = \ c\left( (1-t)\cdot P(y),\ v\ominus \frac{y_i}{1- t|y|}\right)  .\]
Here the domain of definition of~$G_i$ is the space of those tuples 
$(t,y,v)\in  [0,1]\times D(V,n) \times S^V$
such that $t<1$ or $|y|<1$.

We claim that the map
\[  (G_1,\dots,G_n)\ : \
 [0,1)\times D(V,n) \times S^V \ \to \ (S^V)^n \]
takes the subspace $[0,1)\times S(V,n) \times S^V$ of the source
into the wedge inside of the product $(S^V)^n$.
Indeed, because $\Phi(V)$ takes $(S(V,n)/\Sigma_n)_+\sm S^V$ into
$Sp^{n-1}(S^V)$, for every $v\in V$ there is an $i\in\{1,\dots,n\}$ with
\[ c(P(y), ((1-t)v)\ominus y_i ) \ = \ \infty \ , \]
i.e., $|P(y)((1-t)v-y_i)|\geq\rho$. Because
\begin{align*}
 \left| (1-t)\cdot P(y)\left(v - \frac{y_i}{1-t}\right)\right| \ = \ 
 | P(y)((1-t)v - y_i)|\  &\geq \  \rho \ ,
\end{align*}
this implies that~$G_i(t,y,v) =\infty$.

We warn the reader that the maps~$G_i$ do {\em not}
extend continuously to $[0,1]\times D(V,n)\times S^V$! 
However, smashing all $G_i$ together remedies this. In other words, we claim that
the map
\[ \big(   [0,1]\times D(V,n)\ \bs \ \{1\}\times S(V,n) \big) \times S^V  
 \ \xra{\ G_1\sm\dots\sm G_n\ }  \ (S^V)^{\sm n} \]
has a continuous extension (necessarily unique)
\[  \bar G \ : \  [0,1]\times D(V,n) \times S^V \ \to \ (S^V)^{\sm n} \]
that sends~$\{1\}\times S(V,n)\times S^V$ to the basepoint.
To prove the claim, we consider any sequence $(t_m,y^m,v^m)_{m\geq 1}$ in
$\big(   [0,1]\times D(V,n)\ \bs \ \{1\}\times S(V,n) \big)\times S^V$ 
that converges to a point $(1,y,v)$ with $|y|=1$.
We claim that there are $i,j\in\{1,\dots,n\}$
such that $|y_i-y_j|\geq 4\rho$.
Indeed, if that were not the case, then we would have
\begin{align*}
2n\ &= \ \left( n \sum_{i=1}^n |y_i|^2\right) - 
2\left\langle\sum_{i=1}^n y_i,\sum_{j=1}^n y_j\right\rangle
+\left(n \sum_{j=1}^n |y_j|^2\right)\\
&= \ \sum_{i,j=1}^n \left( |y_i|^2- 2\td{y_i,y_j} +|y_j|^2\right)\
= \ \sum_{i,j=1}^n |y_i-y_j|^2 \ < \ (4\rho)^2 n^2\ = \ 4/n\ ,
\end{align*}
a contradiction.

Since $\lim_{m\to\infty} y^m=y$, we deduce that $|y_i^m-y_j^m|\geq 2\rho$ 
for all sufficiently large~$m$.
For these~$m$ there is then a~$k\in\{i,j\}$ such that
\[ \left| (1- t_m|y^m|)v^m - y_k^m \right | 
\ \geq \ \rho \ ,\]
and hence
\begin{align*}
 | G_k(t_m,y^m,v^m)| \ 
\ &= \ \left| c\left( (1-t_m)\cdot P(y^m),\ v^m\ominus\frac{y_k^m}{1- t_m|y^m|} \right) \right| \\ 
&\geq \ \left| v^m -\frac{y_k^m}{1- t_m|y^m|} \right| 
\ \geq \ \frac{\rho}{1- t_m|y^m|} \ .  
\end{align*}
The first inequality is Lemma~\ref{lemma-estimate}~(i).
Since the sequences $(t_m)$ and $|y^m|$ converge to~1, the length of the vector
\[ (G_1(t_m,y^m,v^m),\dots,G_n(t_m,y^m,v^m)) \]
tends to infinity with~$m$, so it converges to the basepoint at infinity
of $S^{V^n}=(S^V)^{\sm n}$.

We have now completed the verification that 
the map~$\bar G:[0,1]\times D(V,n) \times S^V\to (S^V)^{\sm n}$
is continuous.
Because the endomorphism $P(y)$ does not depend on the order of the
components of the tuple~$y$, the maps $G_i$ satisfy
\[ G_i(t, y\cdot \sigma, v) \ = \ G_{\sigma(i)}(t,y,v) \ ,\]
so the map $\bar G$ descends to a well-defined continuous and $O(V)$-equivariant map
\[ [0,1]\times \left( \bar D(V,n)/\Sigma_n \sm S^V \right)
\ \to \ (S^V)^{\sm n}/\Sigma_n \ = \ Sp^n(S^V)/Sp^{n-1}(S^V)\ ,\]
which is the desired equivariant homotopy.
This homotopy starts with the map~$\Phi(V)/\Psi(V)$
and ends with the map 
\begin{align*}
 \bar D(V,n)/\Sigma_n \sm S^V \ &\to \qquad Sp^n(S^V)/Sp^{n-1}(S^V)\\
(y\cdot \Sigma_n ,v) \qquad &\longmapsto \ 
\left( v-\frac{y_1}{1-|y|} \right)\sm\dots\sm\left( v-\frac{y_n}{1-|y|}\right)  \ ;
\end{align*}
this map is a continuous bijection from a compact space
to a Hausdorff space, hence a homeomorphism.
\end{proof}

We recall from~\eqref{eq:define_u_n} the definition of
the unstable homotopy  class $u_n\in\ \pi_0^{\Sigma_n} (B_{\gl}\Fc_n)$.
The stabilization map~\eqref{eq:sigma_map} lets us define a $\Sigma_n$-equivariant 
stable homotopy class
\[ w_n \ =  \ \sigma^{\Sigma_n}(u_n) \ \in \ \pi_0^{\Sigma_n}(\Sigma^\infty_+ B_{\gl}\Fc_n) \ . \] 
The last ingredient for our main calculation is to
determine the image of~$w_n$ under the morphism of orthogonal spectra
\[  \Psi\ : \ \Sigma^\infty_+ B_{\gl} \Fc_n\ \to \  Sp^{n-1} \ . \]

\begin{prop}\label{prop:Psi_of_u_n}
The relation
\[ 
 \Psi_*(w_n) \ = \ i_*\left(\tr_{\Sigma_{n-1}}^{\Sigma_n}(1)\right)   
 \]
holds in the group~$\pi_0^{\Sigma_n}(Sp^{n-1})$, where $i:\mS\to Sp^{n-1}$
is the inclusion.
\end{prop}
\begin{proof}
The class $\Psi_*(w_n)$ is represented by the composite $\Sigma_n$-map 
\begin{equation}\label{eq:first_rep}
 S^{\nu_n} \ \xra{(d_1,\dots,d_n)\cdot\Sigma_n\sm -} \
  (S(\nu_n,n)/\Sigma_n)_+\sm S^{\nu_n} \ \xra{\Psi(\nu_n)} \
Sp^{n-1}(S^{\nu_n})  \ ,
\end{equation}
where~$\nu_n$ is the reduced natural $\Sigma_n$-representation and
\[  (d_1,\dots,d_n) \ = \ \frac{1}{\sqrt{n-1}}(b-e_1,\dots,b-e_n) 
\ \in \ S(\nu_n,n) \ .\]
We define an equivariant homotopy to a different map that is easier to
understand.

The space $\sa^+(\nu_n)$ of positive semidefinite self-adjoint endomorphisms of~$\nu_n$ 
is convex, so we can interpolate between 
$P(d_1,\dots,d_n)$ and the identity of~$\nu_n$ in~$\sa^+(\nu_n)$ by the linear homotopy
\[ t\ \longmapsto \ F_t\ = \  (1-t)\cdot P(d_1,\dots,d_n) \ + \ t \cdot \Id_{\nu_n}\ .\]
This induces a homotopy
\[  K \ : \  [0,1]\times S^{\nu_n}\ \to \  Sp^n(S^{\nu_n})  \ , \quad
K(t,v)\ = \ [ c(F_t, v\ominus d_1),\dots, c(F_t, v\ominus d_n) ]\ .\]
For every permutation~$\sigma\in\Sigma_n$ we have $\sigma\cdot d_i=d_{\sigma(i)}$,
hence
\[     {^\sigma}( P(d_1,\dots,d_n))\ =_\eqref{eq:varphi_past_P} \  
P(\sigma\cdot d_1,\dots,\sigma\cdot d_n)
\ =\  P(d_{\sigma(1)},\dots,d_{\sigma(n)})\ =\  
P(d_1,\dots,d_n) \]
as endomorphisms of~$\nu_n$.
Thus also ${^\sigma}( F_t) = F_t$ and hence
\begin{align*}
 \sigma\cdot c(F_t,v\ominus d_i) 
\ =_\eqref{eq-c-phi-relation} \ 
c(^\sigma(F_t), \sigma\cdot(v\ominus d_i)) \ = \ c(F_t, (\sigma\cdot v)\ominus(\sigma\cdot d_i)) \ 
= \ c(F_t, (\sigma\cdot v)\ominus d_{\sigma(i)}) \ ,
\end{align*}
so $\sigma\cdot K(t,v)=K(t,\sigma\cdot v)$,
i.e., the homotopy~$K$ is $\Sigma_n$-equivariant.
A priori, the homotopy takes values in the $n$-th symmetric product;
however, Lemma~\ref{lemma-land in Sp^n-1} applied to~$V=\nu_n$ and $y=(d_1,\dots,d_n)$
shows that~$K(t,v)$
belongs to~$Sp^{n-1}(S^{\nu_n})$.

The homotopy~$K$ starts with the composite~\eqref{eq:first_rep}, 
so the map
\[  K(1,-) \ : \ S^{\nu_n} \ \to \  Sp^{n-1}(S^{\nu_n})  \text{\qquad given by\qquad}
K(1,v)\ = \  \big[c(\Id_V,v\ominus d_1),\dots,c(\Id_V,v\ominus d_n) \big]\]
is another representative of the class $\Psi_*(w_n)$. Because
\[ |d_i-d_j|\ = \ \sqrt{\frac{2}{n-1}} \ \geq \ \frac{1}{n^{3/2}}\ = \  2\rho  \]
for all $i\ne j$, the interiors of the $\rho$-balls around the points 
$d_1,\dots,d_n$ are disjoint. 
So for every~$v\in S^{\nu_n}$ at most one of the points 
$c(\Id_V,v\ominus d_1),\dots, c(\Id_V,v\ominus d_n)$ 
is different from the basepoint of~$S^{\nu_n}$ at infinity.
The map~$K(1,-)$ thus equals the composite
\[ S^{\nu_n} \ \xra{\ J\ }\ S^{\nu_n} \ \xra{\ i\ }\ 
Sp^{n-1}(S^{\nu_n}) \]
where the first map is defined by
\[  J(v)\ = \ 
\begin{cases}
  c(\Id_V,v\ominus d_i) =  \frac{v-d_i}{1-|v-d_i|/\rho}  & \text{ if $v\ne\infty$ and $|v-d_i|<\rho$, and}\\
\quad \infty & \text{ else.}
\end{cases}  \]
The map $J$ is a $\Sigma_n$-equivariant Thom-Pontryagin collapse map
around the image of the equivariant embedding
\[ \Sigma_n/\Sigma_{n-1} \ \to \ \nu_n \ , \quad
\sigma \Sigma_{n-1} \ \longmapsto \ d_{\sigma(n)} \ .\]
So~$J$ represents the class $\tr_{\Sigma_{n-1}}^{\Sigma_n}(1)$
in the equivariant 0-stem~$\pi_0^{\Sigma_n}(\mS)$.
\end{proof}

Now we can put the pieces together and prove our main calculation.
We let~$I_n$ denote the global subfunctor of the Burnside ring 
global functor~$\mA$ generated by the element 
$t_n=n\cdot 1-\tr_{\Sigma_{n-1}}^{\Sigma_n}(1)$ in $\mA(\Sigma_n)$,
and we let~$I_\infty$ denote the union of the global functors~$I_n$ for all~$n\geq 1$.

\begin{theorem}\label{thm-pi_0 Sp^n} 
The inclusion of orthogonal spectra $Sp^{n-1}\to Sp^n$
induces an epimorphism 
\[  \upi_0(Sp^{n-1}) \ \to \ \upi_0(Sp^n)\]
of the 0-th homotopy global functors
whose kernel is generated, as a global functor, by the class
\[ i_*\left(   n \cdot 1\ -\ \tr_{\Sigma_{n-1}}^{\Sigma_n}(1) \right) 
\ \in \ \pi_0^{\Sigma_n}(Sp^{n-1})\ , \]
where $i:\mS\to Sp^{n-1}$ is the inclusion.
For every $n\geq 1$ and $n=\infty$ the action of the Burnside ring global functor 
on the class $i_*(1)\in \pi_0^e(Sp^n)$ passes to an isomorphism 
of global functors
\[ \mA/I_n \ \iso \ \upi_0(Sp^n) \ .\]  
\end{theorem}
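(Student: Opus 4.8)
The plan is to prove both assertions by induction on $n$. The base case $n=1$ is immediate: $Sp^1=\mS$, so $\upi_0(Sp^1)\cong\mA$ by Example~\ref{eg-global functor examples}(i), and $t_1=1\cdot 1-\tr_{\Sigma_0}^{\Sigma_1}(1)=0$, whence $I_1=0$ and $\mA/I_1=\mA$. For $n\ge 2$ the engine is the global homotopy pushout square~\eqref{eq-ideal square}. Since $C(B_{\gl}\Fc_n)$ is, levelwise and $O(V)$-equivariantly, the cone on $B_{\gl}\Fc_n$, and the cone contraction is natural in~$V$, the collapse of $C(B_{\gl}\Fc_n)$ onto its cone points is a homotopy equivalence of orthogonal spaces; applying $\Sigma^\infty_+$ yields a homotopy equivalence $\Sigma^\infty_+ C(B_{\gl}\Fc_n)\longrightarrow\mS$ under which the left vertical map of~\eqref{eq-ideal square} becomes the morphism $\epsilon\colon\Sigma^\infty_+ B_{\gl}\Fc_n\longrightarrow\mS$ obtained by applying $\Sigma^\infty_+$ to the projection $B_{\gl}\Fc_n\to\ast$. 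As~\eqref{eq-ideal square} is a global homotopy pushout, comparing vertical cofibers identifies $Sp^n/Sp^{n-1}$ with the mapping cone of $\epsilon$, compatibly with the boundary maps of the two associated cofiber sequences.

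From this set-up I extract two facts. First, $\epsilon_*\colon\upi_0(\Sigma^\infty_+ B_{\gl}\Fc_n)\to\mA$ is an epimorphism: by Theorem~\ref{thm-u_n generates T_n} and Proposition~\ref{prop-pi_0 of Sigma^infty} the source is generated, as a global functor, by the class $w_n$, and $\epsilon_*(w_n)=1\in\mA(\Sigma_n)$ (the point class stabilises to the Burnside unit), which already generates $\mA$. Second, suspension spectra are globally connective, so $\pi_{-1}^G(\Sigma^\infty_+ B_{\gl}\Fc_n)=0$ for all $G$. Feeding these into the long exact sequences of the two cofiber sequences shows that $\pi_0^G(Sp^n/Sp^{n-1})=0$ for all $G$, hence that the inclusion $Sp^{n-1}\to Sp^n$ induces an epimorphism $\upi_0(Sp^{n-1})\to\upi_0(Sp^n)$, and — by pushing the boundary map $\pi_1^G(Sp^n/Sp^{n-1})\to\pi_0^G(Sp^{n-1})$ through the identification of cofiber sequences — that the kernel of this epimorphism equals $\Psi_*(\ker\epsilon_*)$ as a subfunctor of $\upi_0(Sp^{n-1})$.

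It remains to compute $\Psi_*(\ker\epsilon_*)$. Write $p\colon\Sigma_n\to e$ for the projection and set $z_n=w_n-p^*\bigl(\res^{\Sigma_n}_e w_n\bigr)$; then $\epsilon_*(z_n)=0$, and I claim $\ker\epsilon_*$ is generated as a global functor by $z_n$. Indeed, since $w_n$ generates $\upi_0(\Sigma^\infty_+ B_{\gl}\Fc_n)$, any class in $\ker\epsilon_*$ is $\theta(w_n)$ for some operation $\theta$, and then $\theta(1_{\mA(\Sigma_n)})=\epsilon_*(\theta(w_n))=0$ because $\epsilon_*(w_n)=1$; since an operation $\pi_0^e\to\pi_0^G$ is detected on the Burnside unit (Proposition~\ref{prop-B_gl represents}), the composite operation $\theta\circ p^*$ is zero, so $\theta(w_n)=\theta(z_n)$. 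Applying $\Psi_*$ and using~\eqref{eq:Psi_of_u_n}, which gives $\Psi_*(w_n)=i_*\bigl(\tr_{\Sigma_{n-1}}^{\Sigma_n}(1)\bigr)$ for $i\colon\mS\to Sp^{n-1}$, together with the elementary identity $\res^{\Sigma_n}_e\bigl(\tr_{\Sigma_{n-1}}^{\Sigma_n}(1)\bigr)=n$, one finds
\[ \Psi_*(z_n)\ =\ i_*\bigl(\tr_{\Sigma_{n-1}}^{\Sigma_n}(1)\bigr)-n\cdot i_*(1)\ =\ -\,i_*\bigl(n\cdot 1-\tr_{\Sigma_{n-1}}^{\Sigma_n}(1)\bigr)\ =\ -\,i_*(t_n) \]
in $\pi_0^{\Sigma_n}(Sp^{n-1})$. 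Hence the kernel of $\upi_0(Sp^{n-1})\to\upi_0(Sp^n)$ is precisely the global subfunctor generated by $i_*(t_n)$, which is the first assertion of the theorem.

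To finish the induction, use the inductive hypothesis $\upi_0(Sp^{n-1})\cong\mA/I_{n-1}$, realised as the action on $i_*(1)$, so that $i_*\colon\mA\to\upi_0(Sp^{n-1})$ is the quotient map. The kernel just computed then corresponds to the subfunctor of $\mA/I_{n-1}$ generated by $t_n+I_{n-1}(\Sigma_n)$; since $\res^{\Sigma_n}_{\Sigma_{n-1}}(t_n)=t_{n-1}$ we have $I_{n-1}\subseteq I_n$, so that subfunctor is $I_n/I_{n-1}$, and therefore $\upi_0(Sp^n)\cong(\mA/I_{n-1})\big/(I_n/I_{n-1})=\mA/I_n$; unwinding the identifications shows the isomorphism is the action on $i_*(1)\in\pi_0^e(Sp^n)$, which closes the induction. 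For $n=\infty$ one passes to the colimit: $Sp^\infty=\colim_n Sp^n$ along the level-wise closed inclusions, which are h-cofibrations, and $\pi_k^G$ commutes with such sequential colimits, so $\upi_0(Sp^\infty)=\colim_n\mA/I_n=\mA\big/\bigl(\bigcup_n I_n\bigr)=\mA/I_\infty$, again via the action on $i_*(1)$. The delicate part of the argument is the passage (in the second paragraph) from the homotopy pushout square to the surjectivity of $\upi_0(Sp^{n-1})\to\upi_0(Sp^n)$ and to the identity $\ker=\Psi_*(\ker\epsilon_*)$, which rests on the compatibility of the two cofiber sequences and on global connectivity of suspension spectra; once that is set up, the remainder is bookkeeping with the $\mA$-module structure on $\upi_0$ and with the computation~\eqref{eq:Psi_of_u_n}.
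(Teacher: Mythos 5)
Your proof is correct and follows essentially the same route as the paper: it runs the exact sequence of the global homotopy pushout \eqref{eq-ideal square} (you phrase it via the cofiber sequence of $\epsilon$ after contracting the cone, the paper via Mayer--Vietoris with an explicit splitting $s$ of $j_*$, which is equivalent), identifies the kernel of $\epsilon_*=j_*$ as generated by $w_n-p^*(\res^{\Sigma_n}_e w_n)$, pushes it forward by $\Psi_*$ using \eqref{eq:Psi_of_u_n}, and closes the induction with $I_{n-1}\subseteq I_n$ and the h-cofibration colimit for $n=\infty$. Your "detection on the Burnside unit" argument for why $\ker\epsilon_*$ is generated by $z_n$ is a clean reformulation of the paper's idempotent-splitting step, resting on the same input (Proposition~\ref{prop-B_gl represents}).
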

\begin{proof}
For every inner product space~$V$ the embedding $Sp^{n-1}(S^V)\to Sp^n(S^V)$ 
has the $O(V)$-equivariant homotopy extension property.
The cone inclusion $q:B_{\gl}\Fc_n \to C(B_{\gl}\Fc_n)$
also has the levelwise homotopy extension property, hence so does
the induced morphism 
$j=\Sigma^\infty_+ q : \Sigma^\infty_+ B_{\gl}\Fc_n \to\Sigma^\infty_+ C(B_{\gl}\Fc_n)$ 
of suspension spectra.
So Theorem~\ref{thm-main homotopy} says that the commutative square
of orthogonal spectra~\eqref{eq-ideal square} 
is a global homotopy pushout square. 
Taking equivariant stable homotopy groups thus results in an exact
Mayer-Vietoris sequence 
that ends in the exact sequence of global functors
\begin{equation}\label{eq:MV-sequence}
 \upi_0( \Sigma^\infty_+ B_{\gl}\Fc_n ) \ \xra{\genfrac(){0pt}{}{j_*}{\Psi_*}}
\ \upi_0( \Sigma^\infty_+ C(B_{\gl}\Fc_n) )\oplus\upi_0( Sp^{n-1}) \ \xra{(-\Phi_*,\text{incl}_*)} \ \upi_0( Sp^n) \ \to \ 0 \ .
\end{equation}

 By Proposition~\ref{thm-u_n generates T_n} the Rep-functor
 $\upi_0 (B_{\gl}\Fc_n)$ is generated by the element $u_n$
 in $\pi_0^{\Sigma_n} (B_{\gl}\Fc_n)$. So by Proposition~\ref{prop-pi_0 of Sigma^infty} 
 the global functor $\upi_0 (\Sigma^\infty_+ B_{\gl}\Fc_n)$
 is generated by the element $w_n=\sigma^{\Sigma_n}(u_n)$
 in $\pi_0^{\Sigma_n} (\Sigma^\infty_+ B_{\gl}\Fc_n)$. 
 The orthogonal space~$C(B_{\gl}\Fc_n)$ is contractible; so 
 its suspension spectrum~$\Sigma^\infty_+ C(B_{\gl}\Fc_n)$ 
 is globally equivalent to the sphere spectrum.
 Thus~$\upi_0(\Sigma^\infty_+ C(B_{\gl}\Fc_n))$ is isomorphic to the Burnside
 ring global functor~$\mA$, and it is freely generated by the class
 $1=\sigma^e(u)$ in~$\pi_0^e(\Sigma^\infty_+ C(B_{\gl}\Fc_n))$,
 where  $u\in\pi_0^e(C(B_{\gl}\Fc_n))$ is the unique element.
 We record that
 \begin{align*}
   j_*(w_n) \ &= \    (\Sigma^\infty_+ q)_*(\sigma^{\Sigma_n}(u_n)) 
   \ = \    \sigma^{\Sigma_n}(q_*(u_n)) \
   = \   \sigma^{\Sigma_n}(p^*(u)) 
   \ = \   p^*(\sigma^e(u))\ = \ p^*(1)\ , 
 \end{align*}
 where $p:\Sigma_n\to e$ is the unique homomorphism.
 The relation~$q_*(u_n)=p^*(u)$ holds because 
 the set~$\pi_0^{\Sigma_n}(C(B_{\gl}\Fc_n))$ has only one element.
 
 Since the global functor $\upi_0(\Sigma^\infty_+ C(B_{\gl}\Fc_n))$ is freely generated
 by the class~$1$, 
 there is a unique morphism 
 $s:\upi_0(\Sigma^\infty_+ C(B_{\gl}\Fc_n))\to \upi_0( \Sigma^\infty_+ B_{\gl}\Fc_n )$
 such that $s(1)=\res^{\Sigma_n}_e(w_n)$. Then
 \[  j_*(s(1))\ = \   \res^{\Sigma_n}_e(j_*(w_n))
 \ = \ \res^{\Sigma_n}_e(p^*(1))\ =  \ 1 \ . \]
 Morphisms out of the global functor~$\upi_0(\Sigma^\infty_+ C(B_{\gl}\Fc_n))$
 are determined by their effect on the universal class, 
 so we conclude that $j_*\circ s=\Id$.
 In particular, $j_*$ is an epimorphism and
 the Mayer-Vietoris sequence~\eqref{eq:MV-sequence}
 restricts to an exact sequence of global functors
 \[ \ker(j_*) \ \xra{\ \Psi_*\ }
 \ \upi_0( Sp^{n-1}) \ \xra{\text{incl}_*} \ \upi_0( Sp^n) \ \to \ 0 \ . \]
 The other composite $s\circ j_*$  is 
 an idempotent endomorphism of $\upi_0(\Sigma^\infty_+ B_{\gl}\Fc_n)$,
 and it satisfies
 \[   s( j_*(w_n))\ = \    s(p^*(1))\ = \  p^*(s(1)) \ = \  p^*(\res^{\Sigma_n}_e(w_n))\ . \]
 The kernel of $j_*$ is thus generated as a global functor by 
 \[  (s\circ j_*-\Id)(w_n) \ = \ p^*(\res^{\Sigma_n}_e(w_n))\ - \ w_n \ . \]
 The global functor $\Psi_*(\ker(j_*))$ is then generated by the class
 \begin{align*}
   \Psi_*( p^*(\res^{\Sigma_n}_e(w_n))- w_n )\ 
   &= \
   p^*\left(\res^{\Sigma_n}_e\left(i_*\left(\tr_{\Sigma_{n-1}}^{\Sigma_n}(1)\right)\right)\right) 
   \ - \    i_*\left(\tr_{\Sigma_{n-1}}^{\Sigma_n}(1)\right) \\
   &= \
   i_*\left(p^*\left(\res^{\Sigma_n}_e\left(\tr_{\Sigma_{n-1}}^{\Sigma_n}(1)\right)\right)- \tr_{\Sigma_{n-1}}^{\Sigma_n}(1)\right)\ 
   = \ i_* \left( n \cdot 1 - \tr_{\Sigma_{n-1}}^{\Sigma_n}(1) \right)  \ .
 \end{align*}
 The first equality uses Proposition~\ref{prop:Psi_of_u_n}. 
 This proves the first claim.
 
 The second claim is then obtained by induction over~$n$,
 using that $I_{n-1}\subset I_n$.
 For $n=\infty$ we use that the canonical map
 \[ \colim_{n} \, \upi_0(Sp^n) \ \to \ \upi_0(Sp^\infty)\]
 is an isomorphism because each embedding $Sp^{n-1}\to Sp^n$
 is levelwise an equivariant h-cofibration.
\end{proof}

\section{Examples}\label{sec-examples}

In this last section we make the description of the global functor
$\upi_0(Sp^n)$ of Theorem~\ref{thm-pi_0 Sp^n} 
more explicit by exhibiting a generating set for the group $I_n(G)$, the kernel of
the map $\mA(G)\iso\pi_0^G(\mS)\to\pi_0^G(Sp^n)$,
in terms of the subgroup structure of~$G$.
We use this to determine~$\pi_0^G(Sp^n)$, for all $n$,
when~$G$ is a~$p$-group, a symmetric group $\Sigma_k$ for $k\leq 4$,
and the alternating group~$A_5$.
The purpose of these calculations is twofold: we want to illustrate that
$\pi_0^G(Sp^n)$ can be worked out explicitly in terms of the poset
of conjugacy classes of subgroups of~$G$ and their relative indices;
and we want to convince the reader that the explicit answer for the
group~$\pi_0^G(Sp^n)$ is much less natural than the
global description of~$\upi_0(Sp^n)$ given by Theorem~\ref{thm-pi_0 Sp^n}.

For a pair of closed subgroups $K\leq H$ of a compact Lie group $G$ 
such that~$K$ has finite index in~$H$ 
we denote by $t^H_K \in \mA(G)$ the class 
\[ t^H_K\ = \ [H:K]\cdot \tr_H^G(1)\ -\ \tr_K^G(1)  \ .\]
For example, $t_n=t^{\Sigma_n}_{\Sigma_{n-1}}$.
The notation is somewhat imprecise because it does not record the ambient group~$G$,
but that should always be clear from the context.
In the next proposition these classes feature under the hypothesis that
the Weyl group of $H$ in~$G$ is finite (so that $\tr_H^G(1)$ is a non-trivial
class in the Burnside ring of~$G$). However, the group~$K$
may have infinite Weyl group, in which case $\tr_K^G(1)=0$ 
and~$t^H_K$ simplifies to $[H:K]\cdot \tr_H^G(1)$.

\begin{prop}\label{prop-describe I_n} 
For every $n\geq 2$ and every compact Lie group~$G$, 
the abelian group $I_n(G)$ is generated by the classes $t_K^H$
as $(H,K)$ runs through a set of representatives of all $G$-conjugacy classes
of nested pairs $K\leq H$ of closed subgroups of~$G$ such that
\begin{itemize}
\item $[H:K]\leq n$ and 
\item the Weyl group $W_G H$ is finite. 
\end{itemize}
\end{prop}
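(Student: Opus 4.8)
\emph{Proof plan.}
The plan is to combine the intrinsic description of $I_n$ provided by Theorem~\ref{thm-pi_0 Sp^n} with the explicit $\mZ$-basis of the global Burnside category from Theorem~\ref{thm-Burnside category basis}. By Theorem~\ref{thm-pi_0 Sp^n}, $I_n$ is the sub-global-functor of $\mA$ generated by the single class $t_n=n\cdot 1-\tr_{\Sigma_{n-1}}^{\Sigma_n}(1)\in\mA(\Sigma_n)$. Since operations act additively on equivariant homotopy groups, the evaluation map $\bA(\Sigma_n,G)\to\mA(G)$, $\varphi\mapsto\varphi(t_n)$, is a homomorphism of abelian groups whose image is closed under post-composition with all further operations; hence this image is precisely $I_n(G)$. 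By Theorem~\ref{thm-Burnside category basis}~(ii) the source $\bA(\Sigma_n,G)$ is free abelian on the operations $\tr_L^G\circ\,\alpha^*$, where $(L,\alpha)$ ranges over conjugacy classes of pairs consisting of a subgroup $L\leq G$ of finite Weyl group and a continuous homomorphism $\alpha:L\to\Sigma_n$. So $I_n(G)$ is generated, as an abelian group, by the classes $\tr_L^G(\alpha^*(t_n))$ for such pairs, and the task reduces to computing these.

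The first step is to identify $\alpha^*(t_n)\in\mA(L)$. Under the isomorphism $\mA\iso\upi_0(\mS)$ of Example~\ref{eg-global functor examples}~(i), restriction along $\alpha$ is restriction of $G$-sets and $\tr_{\Sigma_{n-1}}^{\Sigma_n}(1)$ is the class of the tautological $\Sigma_n$-set $\{1,\dots,n\}$; hence $\alpha^*\bigl(\tr_{\Sigma_{n-1}}^{\Sigma_n}(1)\bigr)=[A]$, where $A$ denotes $\{1,\dots,n\}$ viewed as an $L$-set through $\alpha$. (Equivalently this drops out of the double coset formula~\eqref{eq:double_coset} for $\res^{\Sigma_n}_{\alpha(L)}\circ\,\tr_{\Sigma_{n-1}}^{\Sigma_n}$, followed by inflation along the surjection $L\to\alpha(L)$.) Since $\Sigma_n$ is finite the kernel of $\alpha$ is open, so $A$ splits into finitely many orbits $A\iso\coprod_i L/M_i$ with each $M_i$ open of index $[L:M_i]\leq n$ and $\sum_i[L:M_i]=n$. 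Writing $n\cdot 1=\sum_i[L:M_i]\cdot 1$ then gives, in $\mA(L)$,
\[ \alpha^*(t_n)\ =\ \sum_i\bigl([L:M_i]\cdot 1-[L/M_i]\bigr)\ =\ \sum_i t^L_{M_i}\ , \]
and applying $\tr_L^G$, together with transitivity of transfers $\tr_L^G\circ\,\tr_{M_i}^L=\tr_{M_i}^G$, yields $\tr_L^G(\alpha^*(t_n))=\sum_i t^L_{M_i}$ in $\mA(G)$. Each summand is one of the generators listed in the proposition, with $H=L$ of finite Weyl group in $G$ and $[L:M_i]\leq n$. This proves that $I_n(G)$ is contained in the subgroup generated by the classes $t^H_K$ of the statement.

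For the reverse inclusion I would fix a nested pair $K\leq H\leq G$ with $[H:K]\leq n$ and $W_G H$ finite, and realise $t^H_K$ as an image of $t_n$. Let $\alpha:H\to\Sigma_n$ classify the action of $H$ on the $n$-element set $(H/K)\sqcup\{1,\dots,n-[H:K]\}$ which is trivial on the second summand. Because $t^H_H=[H:H]\cdot 1-[H/H]=0$, the computation of the previous paragraph specialises to $\alpha^*(t_n)=t^H_K$ in $\mA(H)$, and then $\tr_H^G(\alpha^*(t_n))=t^H_K$ in $\mA(G)$, once more by transitivity of transfers. Since $\tr_H^G\circ\,\alpha^*$ is an element of $\bA(\Sigma_n,G)$, this exhibits $t^H_K$ as a member of $I_n(G)$, which completes the argument.

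There is no genuinely hard step here; the proposition is, as the introduction puts it, a purely algebraic exercise. The points that need a little care are the identification of the abstract operations in $\bA$ with the usual restriction and induction of $G$-sets in the Burnside ring, and the elementary remark that a continuous homomorphism from a compact Lie group into the finite group $\Sigma_n$ has open kernel, so the associated $L$-set is a finite disjoint union of orbits $L/M_i$ with $M_i$ open of index at most $n$; everything else is bookkeeping of orbit decompositions and of the relevant index bounds.
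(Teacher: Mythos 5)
Your proof is correct and follows essentially the same route as the paper: identify $I_n(G)$ with the image of the evaluation of $\bA(\Sigma_n,G)$ at $t_n$, invoke the basis of Theorem~\ref{thm-Burnside category basis}~(ii), and compute $\tr_L^G(\alpha^*(t_n))$ via the orbit decomposition of $\alpha^*\{1,\dots,n\}$ — which is exactly the double coset formula for $\res^{\Sigma_n}_{\alpha(L)}\circ\tr_{\Sigma_{n-1}}^{\Sigma_n}$ combined with the compatibility of transfer and inflation, as the paper phrases it. The only cosmetic difference is in the reverse inclusion, where you pad $H/K$ with fixed points to get a homomorphism into $\Sigma_n$ directly, while the paper maps $H$ to $\Sigma_{[H:K]}$ and uses $I_{[H:K]}\subseteq I_n$; both are fine.
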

\begin{proof}
By definition $I_n$ is the image of the morphism of global functors
\[ \bA(\Sigma_n,-) \ \to \ \mA \]
represented by $t_n\in \mA(\Sigma_n)$. 
By Theorem~\ref{thm-Burnside category basis}~(ii) 
the group $\bA(\Sigma_n,G)$ is generated by the operations
$\tr_H^G\circ\alpha^*$ where $(H,\alpha)$ runs through the $(G\times\Sigma_n)$-conjugacy
classes of pairs consisting of a closed subgroup $H\leq G$ with finite Weyl group
and a continuous homomorphism $\alpha:H\to \Sigma_n$.
So~$I_n(G)$ is generated, as an abelian group, by the classes
\[ \tr_H^G(\beta^*(\res^{\Sigma_n}_\Gamma(t_n)))\ \in \ \mA(G) \ ,\]
where $\Gamma$ is a subgroup of~$\Sigma_n$ and $\beta:H\to \Gamma$
a continuous epimorphism.
The double coset formula (in the finite index case) gives
\begin{align*}
 \res^{\Sigma_n}_\Gamma(t_n) \ &= \   
n\cdot 1 - \res^{\Sigma_n}_\Gamma(\tr_{\Sigma_{n-1}}^{\Sigma_n}(1)) \\ 
&= \ 
 n\cdot 1 \ - \sum_{[\sigma]\in \Gamma\backslash \Sigma_n/\Sigma_{n-1}} 
\tr_{\Gamma\cap{^\sigma \Sigma_{n-1}}}^\Gamma(1)
= \ \sum_{[\sigma]\in \Gamma\backslash \Sigma_n/\Sigma_{n-1}} 
t^\Gamma_{\Gamma\cap {^\sigma \Sigma_{n-1}}}
\end{align*}
in~$\mA(\Gamma)$, where we used that
\[ \sum_{[\sigma]\in \Gamma\backslash \Sigma_n/\Sigma_{n-1}}
[\Gamma:\Gamma\cap {^\sigma \Sigma_{n-1}}]  \ = \  n \ .\]
Thus
\begin{align*}
   \beta^*(\res^{\Sigma_n}_\Gamma(t_n))\  &= \ 
\sum_{[\sigma]\in \Gamma\backslash \Sigma_n/\Sigma_{n-1}} 
\beta^*\left(t_{\Gamma\cap{^\sigma \Sigma_{n-1}}}^\Gamma\right)\ = \ 
\sum_{[\sigma]\in \Gamma\backslash \Sigma_n/\Sigma_{n-1}} t_{\beta^{-1}(^\sigma \Sigma_{n-1})}^H
\end{align*}
in~$\mA(H)$. Transferring from $H$ to $G$ gives
\begin{align*}
  \tr_H^G(\beta^*(\res^{\Sigma_n}_\Gamma(t_n)))\ &= \    
  \sum_{[\sigma] \in \Gamma\backslash \Sigma_n/\Sigma_{n-1}} 
   t^H_{\beta^{-1}(^\sigma \Sigma_{n-1})} \text{\quad in~$\mA(G)$.}
\end{align*}
Since ${^\sigma \Sigma_{n-1}}$ has index $n$ in $\Sigma_n$,
the group $\Gamma\cap{^\sigma \Sigma_{n-1}}$ has index at most $n$ in $\Gamma$,
and hence the group $\beta^{-1}(^\sigma \Sigma_{n-1})=\beta^{-1}(\Gamma\cap{^\sigma \Sigma_{n-1}})$ 
has index at most $n$ in $H$.
So $I_n(G)$ is indeed contained in the group described in the statement
of the proposition.

For the other inclusion we consider a pair of closed subgroups $K\leq H$
in the ambient group~$G$ with $m=[H:K]\leq n$. 
A choice of bijection between $H/K$ and $\{1,\dots,m\}$ turns
the left translation action of $H$ on~$H/K$ into a homomorphism
$\beta:H\to\Sigma_m$ such that $H/K$ is isomorphic,
as an $H$-set, to $\beta^*(\{1,\dots,m\})$.
Since~$t_m\in I_m(\Sigma_m)\subset  I_n(\Sigma_m)$
and~$I_n$ is a global functor, we conclude that
\[ t_K^H  \ = \ \tr_H^G\left([H:K]\cdot 1- \tr^H_K(1)\right)\ = \ 
 \tr_H^G(\beta^*(t_m)) \ \in \ I_n(G)\ . \qedhere \]
\end{proof}

For every finite group $G$ the augmentation ideal $I(G)$ is generated
by the classes $t^G_H$ where~$H$ runs through all subgroups of $G$.
So Proposition~\ref{prop-describe I_n} shows that the filtration
by the subfunctors $I_n$ exhausts the augmentation ideal at the $|G|$-th stage:

\begin{cor} Let~$G$ be a finite group.
Then $I_n(G)=I(G)$ for~$n\geq |G|$.
\end{cor}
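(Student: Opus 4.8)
The plan is to establish the two inclusions $I_n(G)\subseteq I(G)$ and $I(G)\subseteq I_n(G)$ separately. The first holds for every $n$ and needs nothing about the order of $G$; the second is where the hypothesis $n\geq|G|$ enters.

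For the inclusion $I_n(G)\subseteq I(G)$ I would simply recall that $I_n=\langle t_n\rangle$ and that the generator $t_n=n\cdot 1-\tr_{\Sigma_{n-1}}^{\Sigma_n}(1)$ has zero augmentation, so that $I_n$ is a global subfunctor of the augmentation ideal global functor~$I$, as in the chain~\eqref{eq-I sequence}. Evaluating at~$G$ gives the claimed inclusion, for any $n$.

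For the reverse inclusion I would feed Proposition~\ref{prop-describe I_n} the observation that, since $G$ is finite, every subgroup of~$G$ has finite index in its normalizer, so the Weyl-group condition in that proposition is vacuous. Hence $I_n(G)$ is generated by the classes $t^H_K$ as $(H,K)$ runs over a set of representatives of the $G$-conjugacy classes of nested pairs $K\leq H\leq G$ with $[H:K]\leq n$. Specializing to $H=G$ and using that $[G:K]\leq|G|\leq n$ for \emph{every} subgroup $K\leq G$, we conclude that $I_n(G)$ contains all the classes $t^G_K=[G:K]\cdot 1-\tr^G_K(1)$ with $K$ an arbitrary subgroup of~$G$. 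These classes generate $I(G)$, as recalled immediately above the corollary, so $I(G)\subseteq I_n(G)$, and equality follows.

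There is no genuine obstacle: the content lies entirely in Proposition~\ref{prop-describe I_n}, and the corollary is merely the remark that its index bound $[H:K]\leq n$ imposes no constraint once $n$ reaches~$|G|$, together with the fact that the auxiliary finiteness hypothesis on the Weyl group is automatic for finite groups.
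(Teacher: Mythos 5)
Your proposal is correct and is essentially the paper's own argument: the paper derives the corollary from the sentence immediately preceding it (that $I(G)$ is generated by the classes $t^G_H$) together with Proposition~\ref{prop-describe I_n}, exactly as you do, with the containment $I_n(G)\subseteq I(G)$ coming from the chain~\eqref{eq-I sequence}. Your write-up just makes explicit the two observations the paper leaves implicit, namely that the Weyl-group condition is vacuous for finite groups and that $[G:K]\leq|G|\leq n$ removes the index bound.
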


However, often the filtration stops earlier, for example for $p$-groups.

\begin{eg}[Finite $p$-groups]
Let $p$ be a prime and $P$ a finite $p$-group. 
Proposition~\ref{prop-describe I_n} shows that $I_n(P)=\{0\}$
for $n<p$. On the other hand, every proper subgroup $H$ of~$P$ admits
a sequence of intermediate subgroups
\[ H\ =\  H_0\ \subset\ H_1 \ \subset \ \dots \ \subset\  H_k\  =\ P  \]
such that $[H_i:H_{i-1}]=p$ for all $i=1,\dots, k$.
Then the class
\begin{align*}
t_H^P\ & = \   p^k \cdot 1 -  \tr^P_H(1)  \ = \  
 \sum_{i=1}^k p^{i-1}\cdot t_{H_{i-1}}^{H_i}  
\end{align*}
belongs to $I_p(P)$ by Proposition~\ref{prop-describe I_n}.
Since the classes $t_H^P$ generate the augmentation ideal, 
we conclude that $I_p(P)=I(P)$.
Hence the group~$\pi_0^P(Sp^n)$ is isomorphic to the Burnside ring~$\mA(P)$
for $1\leq n <p$, and free of rank~1 for $n\geq p$.
\end{eg}

We work out the symmetric product filtration on equivariant homotopy groups
for the symmetric groups~$\Sigma_k$ for~$k\leq 4$
and for the alternating group~$A_5$. 
The groups $G=\Sigma_4$ and~$G=A_5$ provide explicit examples 
of homotopy groups~$\pi_0^G(Sp^n)$ with non-trivial torsion.

\begin{eg}[Symmetric group $\Sigma_2$]\label{s2}
For the group~$\Sigma_2$ we have
$I_2(\Sigma_2) = I(\Sigma_2)$, freely generated by
the class $t_2$, i.e., the filtration terminates at the second step.
Hence the group~$\pi_0^{\Sigma_2}(\mS)$ is free of rank~2,
while the groups $\pi_0^{\Sigma_2}(Sp^n)$ are free of rank~1 for all $n\geq 2$.
\end{eg}

\begin{eg}[Symmetric group $\Sigma_3$]\label{s3}
The group~$\Sigma_3$ has four conjugacy classes of subgroups
with representatives $e, \Sigma_2, A_3$ and~$\Sigma_3$.
So the augmentation ideal $I(\Sigma_3)$ is free of rank~3, 
and a basis is given by the classes 
\[  t_3 \ = \ t_{\Sigma_2}^{\Sigma_3} \ , \quad
p^*(t_2)\ = \ t_{A_3}^{\Sigma_3}
 \text{\qquad and\qquad}  
\tr_{\Sigma_2}^{\Sigma_3}(t_2) \ = \ 2\cdot t_{\Sigma_2}^{\Sigma_3}\ - \  t_e^{\Sigma_3} \  ,\]
where $p:\Sigma_3\to\Sigma_2$ is the unique epimorphism.
Hence $I_2(\Sigma_3)$ is freely generated by the classes $p^*(t_2)$
and~$\tr_{\Sigma_2}^{\Sigma_3}(t_2)$, and $I_3(\Sigma_3) = I(\Sigma_3)$,
i.e., the filtration stabilizes at the third step.
Theorem~\ref{thm-pi_0 Sp^n} lets us conclude 
that the homotopy group~$\pi_0^{\Sigma_3}(Sp^n)$
is free for every~$n\geq 1$, and has rank~4 for $n=1$,
rank~2 for $n=2$,  and rank~1 for $n\geq 3$.
\end{eg}

\begin{eg}[Symmetric group $\Sigma_4$]\label{eg:S4}
The group~$\Sigma_4$ has 11 conjugacy classes of subgroups, displayed below;
the left column lists the order of a subgroup, and lines denote subconjugacy:
\[\xymatrix@R=1mm{
24 & &\Sigma_4\ar@{-}[dddl]\ar@{-}[d]\ar@{-}[ddr]\\
12 & &A_4\ar@{-}[ddd]\ar@{-}[ddddl]\\
 8 & &&\Sigma_2\wr\Sigma_2\ar@{-}[ddr]\ar@{-}[dd]\ar@{-}[ddl]\\
 6 & \Sigma_3\ar@{-}[dd]\ar@/_2pc/@{-}[ddd]\\
 4 & & V_4\ar@{-}[ddr]|(.36)\hole & \Sigma_2\times\Sigma_2\ar@{-}[ddll]\ar@{-}[dd] & 
C_4 \ar@{-}[ddl]\\
 3 & A_3\ar@{-}[ddr]|(.37)\hole\\
 2 &  \Sigma_2 \ar@{-}[rd] && (12)(34)\ar@{-}[dl]\\
 1 & & e
}\]
The augmentation ideal $I(\Sigma_4)$ is free of rank~10, and the classes
\begin{align}\label{eq:generators I_2(Sigma_4)}
 t_e^{\Sigma_2}\ ,  &\quad
t_{\Sigma_2}^{\Sigma_2\times\Sigma_2} \ , \quad
t_{(12)(34)}^{V_4} \ , \quad
t_{A_3}^{\Sigma_3} \ , \quad
t_{V_4}^{\Sigma_2\wr\Sigma_2} \ , \quad
t_{\Sigma_2\times\Sigma_2}^{\Sigma_2\wr\Sigma_2} \ , \quad
t_{C_4}^{\Sigma_2\wr\Sigma_2} \ , \quad
t_{A_4}^{\Sigma_4} 
\end{align}
together with the two classes
\[   t_{\Sigma_2\wr\Sigma_2}^{\Sigma_4}  \text{\qquad and\qquad}  
 t_{\Sigma_3}^{\Sigma_4}\ = \ t_4 \]
form a basis of $I(\Sigma_4)$. 

The group $I_2(\Sigma_4)$ is generated 
by the classes~$t_K^H$ as $(H,K)$ runs over all pairs of
nested subgroups with $[H:K]=2$. 
All classes of this particular form 
are linear combinations of the eight classes~\eqref{eq:generators I_2(Sigma_4)}:
\begin{align*}
t_{(12)(34)}^{\Sigma_2\times\Sigma_2}  \ &= \ t_{(12)(34)}^{V_4}\ +\  2\cdot t_{V_4}^{\Sigma_2\wr\Sigma_2}
\ -\ 2\cdot t_{\Sigma_2\times\Sigma_2}^{\Sigma_2\wr\Sigma_2} \\
 t_{(12)(34)}^{C_4} \ &= \  
t_{(12)(34)}^{V_4}\  +\ 2\cdot t_{V_4}^{\Sigma_2\wr\Sigma_2} 
\ -\ 2\cdot  t_{C_4}^{\Sigma_2\wr\Sigma_2}\\
 t_e^{(12)(34)} \ &= \  
\quad t_e^{\Sigma_2}\quad  +\ 2\cdot t_{\Sigma_2}^{\Sigma_2\times\Sigma_2} 
\ -\ 2\cdot  t_{(12)(34)}^{\Sigma_2\times\Sigma_2}  
\end{align*}
So the eight classes~\eqref{eq:generators I_2(Sigma_4)} form a basis of $I_2(\Sigma_4)$.

The group $I_3(\Sigma_4)$ is generated by the classes $t_K^H$ for all 
nested subgroup pairs with $[H:K]\leq 3$. 
We observe that
\begin{equation} \label{eq:honest_relation}
3\cdot t_4 \ = \ 3\cdot t_{\Sigma_3}^{\Sigma_4}  
\ = \ t_{\Sigma_2}^{\Sigma_2\times\Sigma_2} 
\ + \ 2\cdot t_{\Sigma_2\times\Sigma_2}^{\Sigma_2\wr\Sigma_2} 
\ + \ 4\cdot t_{\Sigma_2\wr\Sigma_2}^{\Sigma_4} -  t_{\Sigma_2}^{\Sigma_3} \ \in \ I_3(\Sigma_4)\ ;
\end{equation}
Proposition~\ref{prop-general inclusion} below explains in which way this relation is an
exceptional feature for $n=4$.
All classes~$t_K^H$ with~$[H:K]\leq 3$
are linear combinations of the classes~\eqref{eq:generators I_2(Sigma_4)}
and the two classes
\begin{equation}\label{eq:generators I_3(Sigma_4)}
  t_{\Sigma_2\wr\Sigma_2}^{\Sigma_4}  \text{\qquad and\qquad} 3\cdot t_{\Sigma_3}^{\Sigma_4}\ .
\end{equation}
Indeed:
\begin{align*}
 t_{V_4}^{A_4} \ &= \  
t_{V_4}^{\Sigma_2\wr\Sigma_2}\  +\  2\cdot t_{\Sigma_2\wr\Sigma_2}^{\Sigma_4} 
\ -\ 3\cdot  t_{A_4}^{\Sigma_4} \\
  t_{\Sigma_2}^{\Sigma_3} \ &= \ t_{\Sigma_2}^{\Sigma_2\times\Sigma_2} 
 +\  2\cdot t_{\Sigma_2\times\Sigma_2}^{\Sigma_2\wr\Sigma_2} 
 +\  4\cdot t_{\Sigma_2\wr\Sigma_2}^{\Sigma_4}\ -\ 3\cdot t_{\Sigma_3}^{\Sigma_4}  \\
t_e^{A_3}\ &= \quad t_e^{\Sigma_2}\quad + \ 2 \cdot t_{\Sigma_2}^{\Sigma_3} \quad - \ 3 \cdot t_{A_3}^{\Sigma_3}
\end{align*}
Since the eight elements~\eqref{eq:generators I_2(Sigma_4)}
and the two elements~\eqref{eq:generators I_3(Sigma_4)}
together are linearly independent, 
they form a basis of $I_3(\Sigma_4)$.

Since $[\Sigma_4:\Sigma_3]=4$, the last basis element
$t_4=t_{\Sigma_3}^{\Sigma_4}$ belongs to $I_4(\Sigma_4)$,
and thus $I_4(\Sigma_4)=I(\Sigma_4)$. The relation~\eqref{eq:honest_relation} shows that 
$I_3(\Sigma_4)$ has index~3 in $I_4(\Sigma_4)=I(\Sigma_4)$.
Altogether, Theorem~\ref{thm-pi_0 Sp^n} lets us conclude:
\begin{itemize}
\item  the group
$\pi_0^{\Sigma_4}(\mS)=\pi_0^{\Sigma_4}(Sp^1)$
is free of rank~11,
\item  the group
$\pi_0^{\Sigma_4}(Sp^2)$ is free of rank~3,
\item the group~$\pi_0^{\Sigma_4}(Sp^3)$ has rank~1 and its torsion subgroup has order~3, and
\item for all $n\geq 4$, the group~$\pi_0^{\Sigma_4}(Sp^n)$
is free of rank~1. 
\end{itemize}
\end{eg}

\begin{eg}[Alternating group $A_5$]\label{eg-A5}
The last example that we treat in detail is the alternating group~$A_5$.
The point is not just to have another explicit example, but we also
need the calculation of $I_5(A_5)$ in Example~\ref{eg:S5}
for identifying when the filtration of $\Sigma_5$ stabilizes.
The group $A_5$ has~9 conjugacy classes of subgroups:
\[\xymatrix@R=1mm{
 60 &&  A_5 \ar@{-}[ddr]\ar@{-}[dddl]\ar@{-}[d]\\
 12 &&  A_4\ar@{-}[dddd]\ar@{-}[dddddl]|(.67)\hole & \\
 10 &&& D_5 \ar@{-}[dd]\ar@{-}[dddddl]\\  
  6 & \tilde\Sigma_3\ar@{-}[ddd]\ar@{-}[ddddr] && \\
  5 & & & C_5\ar@/^1pc/@{-}[ddddl] \\  
  4 & & V_4 \ar@{-}[dd] & & \\
  3 & A_3\ar@{-}[ddr]\\
  2 && (12)(34)\ar@{-}[d]\\
  1 && e
}\]
The group $\tilde\Sigma_3$ is generated by the elements $(1 2 3)$
and $(1 2)(4 5)$ and is isomorphic to~$\Sigma_3$
(but not conjugate in $\Sigma_5$ to the `standard' copy
of $\Sigma_3$~ generated by~$(1 2 3)$ and $(1 2)$).
The dihedral group $D_5$ is generated by the elements 
$(12345)$ and $(2 5)(3 4)$.

The augmentation ideal $I(A_5)$ is free of rank~8, 
and a convenient basis for our purposes is given by the classes
\begin{align}\label{eq:basis of I(A_5)}
 t_e^{(12)(34)}\ ,  &\quad
t_{(12)(34)}^{V_4} \ , \quad
t_{A_3}^{\tilde\Sigma_3} \ , \quad
t_{C_5}^{D_5} \ , \quad
t_{V_4}^{A_4} \ , \quad
t_{(12)(34)}^{\tilde\Sigma_3}+ t_{A_3}^{A_4}\ , \quad
t_{A_4}^{A_5} \text{\quad and\quad}  
 t_{D_5}^{A_5}\ . 
\end{align}
Proposition~\ref{prop-describe I_n} says that the group $I_2(A_5)$ 
is generated by the classes~$t_K^H$ as $(H,K)$ runs over all pairs of
nested subgroups with $[H:K]=2$, i.e, by the first four classes 
of the basis~\eqref{eq:basis of I(A_5)}.
So these four classes form a basis of $I_2(A_5)$.

We observe that
\begin{align}\label{eq-relation in I_3(A_5)}
3\cdot (  t_{(12)(34)}^{\tilde\Sigma_3} + t_{A_3}^{A_4})\ &= \ 
2\cdot t_{(12)(34)}^{V_4}\ +\ 4\cdot t_{V_4}^{A_4} 
\ +\ 3\cdot t_{A_3}^{\tilde\Sigma_3}\ + \ t_{(12)(34)}^{\tilde\Sigma_3} \ \in \ I_3(A_5)\ .
\end{align}
The group $I_3(A_5)$ is generated by the classes $t_K^H$ 
for all nested subgroup pairs with $[H:K]\leq 3$. Because
\begin{align*}
 t_{(12)(34)}^{\tilde\Sigma_3} \ &= \ 
3\cdot (  t_{(12)(34)}^{\tilde\Sigma_3} + t_{A_3}^{A_4})\ - \ 
2\cdot t_{(12)(34)}^{V_4}\ -\ 3\cdot t_{A_3}^{\tilde\Sigma_3}\ -\ 4\cdot t_{V_4}^{A_4} \\
 t_e^{A_3} \quad &= \ 
 t_e^{(12)(34)}\ - \ 3\cdot t_{A_3}^{\tilde\Sigma_3}\ +\ 
2\cdot t_{(12)(34)}^{\tilde\Sigma_3} 
\end{align*}
all such classes are linear combinations of the~6 classes
\[  t_e^{(12)(34)}\ ,  \quad
t_{(12)(34)}^{V_4} \ , \quad
t_{A_3}^{\tilde\Sigma_3} \ , \quad
t_{C_5}^{D_5}  \ , \quad
t_{V_4}^{A_4} \text{\qquad and\qquad} 3\cdot (  t_{(12)(34)}^{\tilde\Sigma_3} + t_{A_3}^{A_4}) \ .\]
Since these classes are linearly independent, 
they form a basis of the group~$I_3(A_5)$.

The group $I_4(A_5)$ is generated by the classes $t_K^H$
for all nested subgroups with $[H:K]\leq 4$. 
There is only one new generator, the class~$t_{A_3}^{A_4}$;
since $t_{(12)(34)}^{\tilde\Sigma_3}\in I_3(A_5)\subset I_4(A_5)$,
the group $I_4(A_5)$ is freely generated by 
the first six elements of the basis~\eqref{eq:basis of I(A_5)}.
Because $3\cdot t_{A_3}^{A_4}\in I_3(A_5)$,
the group $I_3(A_5)$ has index~3 in~$I_4(A_5)$.

The group $I_5(A_5)$ is generated by the classes $t_K^H$
for all nested subgroup pairs with $[H:K]\leq 5$. 
In particular, $I_5(A_5)$ contains the seventh element~$t_{A_4}^{A_5}$ 
of the basis~\eqref{eq:basis of I(A_5)}.
We observe that
\begin{equation}\label{eq:5D}
 5\cdot t_{D_5}^{A_5}\ = \ 
t_{(12)(34)}^{V_4}\ +\ 2\cdot t_{V_4}^{A_4}\ +\ 6 \cdot t_{A_4}^{A_5}\ -\  t_{(12)(34)}^{D_5}
\ \in\ I_5(A_5)\ .  
\end{equation}
All classes of the form $t_K^H$ with~$[H:K]\leq 5$
are linear combinations of the first seven classes 
of the basis~\eqref{eq:basis of I(A_5)} and the class~\eqref{eq:5D}:
\begin{align*}
 t_{(12)(34)}^{D_5}\ &= \ 
t_{(12)(34)}^{V_4}\ +\ 2\cdot t_{V_4}^{A_4}\ +\  6 \cdot t_{A_4}^{A_5}\ -\ 
 5\cdot t_{D_5}^{A_5}\\
t_e^{C_5}\quad &= \ t_e^{(12)(34)}\ + \ 2\cdot t_{(12)(34)}^{V_4}\ - \ 5\cdot t_{C_5}^{D_5}
\ + \ 4\cdot t_{V_4}^{A_4}\ + \ 12 \cdot t_{A_4}^{A_5} \ - \ 2\cdot (5\cdot t_{D_5}^{A_5})  \ .
\end{align*}
The group $I_5(A_5)$ is thus generated by the linearly independent classes
\[  t_e^{(12)(34)}\ ,  \quad
t_{(12)(34)}^{V_4} \ , \quad
t_{A_3}^{\tilde\Sigma_3} \ , \quad
t_{C_5}^{D_5}  \ , \quad
t_{V_4}^{A_4} \ , \quad
t_{(12)(34)}^{\tilde\Sigma_3} + t_{A_3}^{A_4}\ , \quad 
t_{A_4}^{A_5}
\text{\qquad and\qquad}
5\cdot t_{D_5}^{A_5}\ . \]
So $I_5(A_5)$ has full rank~8, but index~5 in the augmentation ideal~$I(A_5)$.
Since $[A_5:D_5]=6$, the last basis element $ t_{D_5}^{A_5}$
belongs to~$I_6(A_5)$, and we conclude that $I_6(A_5)=I(A_5)$ 
is the full augmentation ideal.
Altogether, Theorem~\ref{thm-pi_0 Sp^n} lets us conclude that 
\begin{itemize}
\item  the group
$\pi_0^{A_5}(\mS)=\pi_0^{A_5}(Sp^1)$
is free of rank~9,
\item  the group
$\pi_0^{A_5}(Sp^2)$ is free of rank~5,
\item the group~$\pi_0^{A_5}(Sp^3)$ has rank~3 and its torsion subgroup has order~3, 
\item the group~$\pi_0^{A_5}(Sp^4)$ is free of rank~3,
\item the group~$\pi_0^{A_5}(Sp^5)$ has rank~1 and its torsion subgroup has order~5,  and
\item for all $n\geq 6$, the group~$\pi_0^{A_5}(Sp^n)$
is free of rank~1. 
\end{itemize}
\end{eg}

\begin{eg}[Symmetric group $\Sigma_5$]\label{eg:S5}
We refrain from a complete calculation of the groups~$\pi_0^{\Sigma_5}(Sp^n)$, 
but we work out where the filtration for $\Sigma_5$ stabilizes.
The previous examples could be mistaken as evidence that
the group $I_n(\Sigma_n)$ coincides with the full augmentation ideal $I(\Sigma_n)$
for every $n$; equivalently, one could get the false impression that 
the group $\pi_0^{\Sigma_n}(Sp^n)$ is always free of rank~1. 
While this is true for $n\leq 4$, we will now see that it fails for $n=5$,
i.e., that $I_5(\Sigma_5)$ is strictly smaller than~$I(\Sigma_5)$.

We let~$B$ denote the subgroup of~$\Sigma_5$
generated by the elements $(12345)$ and $(2 3 5 4)$;
this group has order~20 and is isomorphic to 
the semi-direct product $\mF_5\rtimes (\mF_5)^\times$, the affine linear
group of the field~$\mF_5$.
The intersection of~$B$ with the alternating group~$A_5$
is the dihedral group $D_5$.
The double coset formula thus gives 
\begin{align*}
 \res^{\Sigma_5}_{A_5}(t_B^{\Sigma_5}) \ &= \   
6\ - \ \res^{\Sigma_5}_{A_5}(\tr_B^{\Sigma_5}(1)) \ = \   
6\ - \  \tr_{D_5}^{A_5}(\res^B_{D_5}(1))  \ = \  t_{D_5}^{A_5}\ . 
\end{align*}
We showed in the previous Example~\ref{eg-A5} that the class $t_{D_5}^{A_5}$
does {\em not} belong to $I_5(A_5)$. 
Since~$I_5$ is closed under restriction maps,
the class $t_B^{\Sigma_5}$ does {\em not} belong to $I_5(\Sigma_5)$,
and hence $I_5(\Sigma_5)\ne I(\Sigma_5)$.

Every subgroup~$H$ of~$\Sigma_5$ admits a nested sequence of subgroups
\[ H \ = \ H_0 \ \subset \ H_1 \ \subset \ \dots \ \subset \ H_k  \]
with $[H_i:H_{i-1}]\leq 6$ for all $i=1,\dots,k$
and such that the last group $H_k$ is either the full group $\Sigma_5$ or 
conjugate to the maximal subgroup $\Sigma_3\times\Sigma_2$ of index~10.
The relation
\begin{align*}
t^{\Sigma_5}_{\Sigma_3\times\Sigma_2}\ &= \ 
t^{\Sigma_3\times\Sigma_2}_{\Sigma_3}\ - \ 
t^{\Sigma_3\times\Sigma_2}_{\Sigma_2\times\Sigma_2}\ - \ 
t^{\Sigma_4}_{\Sigma_3}\  + \ 
t^{\Sigma_2\wr\Sigma_2}_{\Sigma_2\times\Sigma_2}\ + \ 2\cdot t^{\Sigma_4}_{\Sigma_2\wr\Sigma_2}\ + \ 
2\cdot t^{\Sigma_5}_{\Sigma_4}
\end{align*}
shows that the class $t^{\Sigma_5}_{\Sigma_3\times\Sigma_2}$ lies in $I_5(\Sigma_5)$.
So $t_H^{\Sigma_n}$ belongs to~$I_6(\Sigma_5)$ for every subgroup~$H$
of~$\Sigma_n$, and hence $I_6(\Sigma_5) =I(\Sigma_5)$.
\end{eg}

We pause to point out a curious phenomenon that happens only for $n=4$.
This exceptional behavior can be traced back to the fact that the
alternating group~$A_4$ has a subgroup of `unusually small index'
(the Klein group~$V_4$ of index~3),
compare the proof of the next proposition.
We have seen in~\eqref{eq:honest_relation} that
$3\cdot t_4$ lies in $I_3(\Sigma_4)$.
Since the class $t_4$ generates the global functor $I_4$, this implies that
\[ 3\cdot I_4 \ \subset \ I_3 \ \subset \ I_4 \ . \]
So after inverting~3, the inclusion $I_3 \to I_4$ and the epimorphism of global functors
\[  \upi_0(Sp^3)\ \to \ \upi_0(Sp^4) \]
induced by the inclusion $Sp^3\to Sp^4$ both become isomorphisms. However:

\begin{prop}\label{prop-general inclusion} 
For every $n\geq 2$ with $n\ne 4$, 
the inclusion $I_{n-1}\to I_n$ is not a rational isomorphism.
\end{prop}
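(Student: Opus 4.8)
The plan is to detect the failure of the inclusion at the single group $\Sigma_n$. Since $I_{n-1}\subseteq I_n$ by~\eqref{eq-I sequence} and $I_n$ is generated, as a global functor, by the class $t_n=t_{\Sigma_{n-1}}^{\Sigma_n}\in\mA(\Sigma_n)$, the inclusion is a rational isomorphism if and only if $t_n\in I_{n-1}(\Sigma_n)\otimes\mQ$: indeed, if some multiple $m\cdot t_n$ lies in $I_{n-1}(\Sigma_n)$, then the global subfunctor generated by $m\cdot t_n$, which is $m\cdot I_n$, is contained in $I_{n-1}$, whence $I_{n-1}\otimes\mQ=I_n\otimes\mQ$; conversely a rational isomorphism is in particular surjective on the value at $\Sigma_n$, which contains $t_n$. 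So the first step merely records this reduction, and what remains is to prove $t_n\notin I_{n-1}(\Sigma_n)\otimes\mQ$ whenever $n\neq 4$.

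For the second step I would make $I_{n-1}(\Sigma_n)$ explicit with Proposition~\ref{prop-describe I_n}: since $\Sigma_n$ is finite the Weyl group condition is vacuous, so $I_{n-1}(\Sigma_n)$ is spanned by the classes $t_K^H=[H:K]\cdot\tr_H^{\Sigma_n}(1)-\tr_K^{\Sigma_n}(1)$ over nested pairs $K\leq H\leq\Sigma_n$ with $2\leq[H:K]\leq n-1$. The group $\mA(\Sigma_n)$ is free abelian on the basis $\{\tr_H^{\Sigma_n}(1)\}$ indexed by conjugacy classes of subgroups, so a $\mQ$-linear functional $\psi$ on $\mA(\Sigma_n)\otimes\mQ$ is exactly a function assigning a rational number $a_H$ to each conjugacy class, and then $\psi(t_K^H)=[H:K]\,a_H-a_K$ while $\psi(t_n)=n\,a_{\Sigma_n}-a_{\Sigma_{n-1}}$. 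Hence $t_n\in I_{n-1}(\Sigma_n)\otimes\mQ$ if and only if every solution of the homogeneity system $[H:K]\,a_H=a_K$, taken over the pairs above, also satisfies $n\,a_{\Sigma_n}=a_{\Sigma_{n-1}}$. I would then form the graph $\mathcal G_n$ whose vertices are the conjugacy classes of subgroups of $\Sigma_n$ and whose edges are those nested pairs, and observe two things: along any edge the quantity $a_H/[\Sigma_n:H]$ is constant, and conversely choosing one value $\lambda_c$ of this quantity for each connected component $c$ and setting $a_H=\lambda_{c(H)}\cdot[\Sigma_n:H]$ produces a solution. Consequently $n\,a_{\Sigma_n}-a_{\Sigma_{n-1}}=n(\lambda_{c(\Sigma_n)}-\lambda_{c(\Sigma_{n-1})})$ vanishes on all solutions precisely when $[\Sigma_n]$ and $[\Sigma_{n-1}]$ lie in the same component of $\mathcal G_n$. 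This turns the proposition into the purely group-theoretic assertion that $[\Sigma_n]$ and $[\Sigma_{n-1}]$ lie in different components of $\mathcal G_n$ for all $n\neq 4$.

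The last and, I expect, most substantial step is to analyze the component of $[\Sigma_n]$. For $n=2$ the graph has no edges and for $n=3$ the component of $[\Sigma_3]$ is just $\{[\Sigma_3],[A_3]\}$ (the pair $\Sigma_2\leq\Sigma_3$ has index $3>2$), so the claim is immediate. For $n\geq 5$ the point is that $\{[\Sigma_n],[A_n]\}$ is a connected component by itself: a proper subgroup of $\Sigma_n$ of index at most $n-1$ yields a nontrivial transitive action of $\Sigma_n$ on at most $n-1$ points, whose kernel is a proper nontrivial normal subgroup of $\Sigma_n$, hence $A_n$, forcing the subgroup to be $A_n$ of index $2$; and a proper subgroup of $A_n$ of index at most $n-1$ would embed the simple group $A_n$ into $\Sigma_{n-1}$, which is impossible for order reasons. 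Thus no edge of $\mathcal G_n$ leaves $\{[\Sigma_n],[A_n]\}$, and $[\Sigma_{n-1}]$, the class of a subgroup of order $(n-1)!$, is not one of these two vertices, so it lies in a different component. (For $n=4$ the argument genuinely breaks: $A_4$ has the Klein four-group $V_4$ as a subgroup of the small index $3$, and one obtains the connecting chain $\Sigma_4\supset A_4\supset V_4\supset\langle(12)(34)\rangle\supset e\subset A_3\subset\Sigma_3$ with all relative indices $2$ or $3$, placing $[\Sigma_3]$ and $[\Sigma_4]$ in one component, which is the source of the relation~\eqref{eq:honest_relation}.) The main obstacle is precisely this determination of the connected component of $[\Sigma_n]$; once it is pinned down, the rest is bookkeeping.
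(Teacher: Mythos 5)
Your proof is correct, but it takes a genuinely different route from the paper's. The paper detects the failure at the group $A_n$: it restricts $t_n$ along $\res^{\Sigma_n}_{A_n}$ to get $t_{A_{n-1}}^{A_n}\in I_n(A_n)$, and then argues that in any expression $k\cdot t_{A_{n-1}}^{A_n}=\sum\lambda_i t_{K_i}^{H_i}$ with all $[H_i:K_i]<n$, none of the $H_i$ can be the full group $A_n$ (since $A_n$ has no proper subgroup of index $<n$), so extracting the coefficient of the basis element $\tr_{A_n}^{A_n}(1)=1$ in $\mA(A_n)$ forces $k=0$. You instead stay at $\Sigma_n$ and dualize: you characterize $\mathbb{Q}$-linear functionals annihilating all generators $t^H_K$ of $I_{n-1}(\Sigma_n)$ as the functions $H\mapsto\lambda_{c(H)}\cdot[\Sigma_n:H]$ constant on components of your incidence graph, reducing the statement to the assertion that $[\Sigma_n]$ and $[\Sigma_{n-1}]$ lie in different components; for $n\geq 5$ this follows because $\{[\Sigma_n],[A_n]\}$ is an isolated component, which you prove from the normal subgroup structure of $\Sigma_n$ and the absence of small-index subgroups of $A_n$. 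Both arguments ultimately rest on the same group-theoretic input (no proper subgroup of $A_n$ of index less than $n$), but your version buys an exact criterion — $t_n$ lies in $I_{n-1}(\Sigma_n)\otimes\mathbb{Q}$ if and only if the two vertices are connected — which makes the $n=4$ exception (the chain through $V_4$) completely transparent, whereas the paper's coefficient extraction is shorter because it only needs to rule out one basis element rather than determine a whole connected component. Your preliminary reduction (rational isomorphism of global functors if and only if $t_n\in I_{n-1}(\Sigma_n)\otimes\mathbb{Q}$, via $m\cdot I_n=\langle m t_n\rangle\subseteq I_{n-1}$) is correct and is implicitly how the paper's discussion of $3\cdot I_4\subset I_3$ before the proposition works as well.
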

\begin{proof}
Example~\ref{s2} shows that no non-zero multiple of the class~$t_2$ 
belongs to~$I_1(\Sigma_2)$.
Example~\ref{s3} shows that no non-zero multiple of~$t_3$ 
belongs to~$I_2(\Sigma_3)$. So we assume $n\geq 5$ for the rest of the argument.
We recall that the alternating group~$A_n$ has no proper subgroup~$H$ 
of index less than~$n$.
Indeed, the left translation action on $A_n/H$ 
provides a homomorphism~$\rho:A_n\to\Sigma(A_n/H)$ to the symmetric group
of the underlying set of~$A_n/H$. For $[A_n:H]<n$, the order of
$\Sigma(A_n/H)$ is strictly less than the order of~$A_n$. So the homomorphism
$\rho$ has a non-trivial kernel. Since the group $A_n$ is simple, 
$\rho$ must be trivial, which forces~$H=A_n$.

Now we prove the proposition.
The class $t_{A_{n-1}}^{A_n}=\res^{\Sigma_n}_{A_n}(t_n)$ 
belongs to $I_n(A_n)$, but for~$n> 4$ 
no non-zero multiple of it belongs to $I_{n-1}(A_n)$.
Indeed, otherwise Proposition~\ref{prop-describe I_n} would allow us to write
\begin{align*}
 k\cdot t_{A_{n-1}}^{A_n}\ &=  \ \lambda_1\cdot t_{K_1}^{H_1} +\dots +\lambda_m\cdot t_{K_m}^{H_m} 
\end{align*}
in~$\mA(A_n)$, for certain integers $k,\lambda_1,\dots,\lambda_m$ and 
nested subgroup pairs with $1< [H_i:K_i]< n$.
Since~$A_n$ has no proper subgroup of index less than~$n$,
the groups $H_1,\dots,H_m$ must all be different from the full group~$A_n$. 
We expand both sides in terms of 
the basis of~$\mA(A_n)$ given by the classes $\tr_H^{A_n}(1)$
(for $H$ running through the conjugacy classes of subgroups).
On the right hand side the coefficient of the basis element $\tr_{A_n}^{A_n}(1)=1$ is zero,
whereas the coefficient on the left hand side is~$k n$. So we must have~$k=0$.
\end{proof}

We conclude by looking more closely at the limit case, 
the infinite symmetric product spectrum.
We remark without proof that, generalizing the non-equivariant situation, 
the orthogonal spectrum $Sp^\infty$
is globally equivalent to the orthogonal spectrum $H\mZ$ defined by
\[ (H\mZ)(V) \ = \ \mZ[S^V] \ , \]
the reduced free abelian group generated by the $n$-sphere.
Theorem~\ref{thm-pi_0 Sp^n} shows that
\[ \mA/I_\infty \ \iso \ \upi_0(Sp^\infty) \ ,\]  
induced by the action of~$\mA$ on the class $i_*(1)$.
For every compact Lie group~$G$, the map 
\[ \res^G_e \ : \ \pi_0^G(Sp^\infty) \ \to \ \pi_0^e(Sp^\infty) \iso\mZ  \]
is a split epimorphism, so the group $\pi_0^G(Sp^\infty)$
is free of rank~1 if and only if $I_\infty(G)=I(G)$.

We can split the group $\mA(G)/I_{\infty}(G)$,
and hence the group $\pi_0^G(Sp^\infty)$, into summands 
indexed by conjugacy classes of connected subgroups of~$G$.
If $C$ is such a connected subgroup, we denote by
$\mA(G;C)$ the subgroup of the Burnside ring~$\mA(G)$ that is
generated by the transfers $\tr_H^G(1)$ for all subgroups~$H$ 
with~$C=H^\circ$, the path component of the identity of~$H$ 
(or equivalently, $H$ contains $C$ as a finite index subgroup).
Then
\[ \mA(G)\ = \ \bigoplus_{(C)} \, \mA(G;C)\]
where the sum runs over conjugacy classes of connected subgroups of~$G$.
Proposition~\ref{prop-describe I_n} shows that~$I_\infty(G)$
is generated as an abelian group by the classes
\[ t^H_K\ = \ [H:K]\cdot \tr_H^G(1) - \tr_K^G(1) \ \in \ \mA(G)\]
as $(H,K)$ runs through all pairs of nested closed subgroups
such that $K$ has finite index in~$H$,
and $H$ has finite Weyl group in $G$.
Then~$K$ and~$H$ have the same connected component of
the identity, i.e., $K^\circ=H^\circ$, so the relation $t^H_K$ belongs
to the direct summand $\mA(G;K^\circ)$. 
Hence
\[ \pi_0^G(Sp^\infty)\ \iso \ \mA(G)/I_\infty(G)\ = \ 
\bigoplus_{(C)}\,\left( \mA(G;C)/I_\infty(G;C)\right) \ ,\]
where $I_\infty(G;C)$ is the subgroup of $\mA(G;C)$ generated by
the classes $t^H_K$ with $H^\circ=K^\circ=C$.
The summands behave quite differently according to whether $C$ has
infinite or finite Weyl group:
\begin{itemize}
\item If~$C$ has an infinite Weyl group, then for every subgroup $H\leq G$ with 
$H^\circ=C$ the class $[H:C]\cdot \tr_H^G(1)$ belongs to~$I_\infty(G;C)$.
So the class $\tr_H^G(1)$ becomes torsion in 
the quotient group $\mA(G;C)/I_\infty(G;C)$, which is thus a torsion group.

\item If~$C$ has finite Weyl group, 
and~$H\leq G$ satisfies $H^\circ=C$, then the relations
\[   C = H^\circ \ \leq\  H\ \leq \ N_G H \ \leq \ N_G C    \]
show that~$H$ has finite Weyl group and finite index
in~$N_G C$. So
\[ t_H^{N_G C} \ = \ [N_G C:H]\cdot \tr_{N_G C}^G(1)\ - \ \tr_H^G(1)\ \in \ I_\infty(G;C)\]
and in the quotient group  $\mA(G;C)/I_\infty(G;C)$, the class
$\tr_H^G(1)$ becomes a multiple of the class~$\tr_{N_G C}^G(1)$.
Hence the group $\mA(G;C)/I_\infty(G;C)$ is free of rank~1, 
generated by~$\tr_{N_G C}^G(1)$.

In the situation at hand, the subgroup~$C$ 
can be recovered as the identity component of its normalizer.
A compact Lie group has only finitely many conjugacy classes of  
subgroups that are normalizers of connected subgroups, 
see~\cite[VII Lemma 3.2]{borel:bredon}.
So there are only
finitely many conjugacy classes of connected subgroups with finite Weyl group.
\end{itemize}

So altogether we conclude that the group 
$\pi_0^G(Sp^\infty)$ is a direct sum of a torsion group
and a free abelian group of finite rank.
In particular, the rationalization
$\mQ\tensor \pi_0^G(Sp^\infty)$ is a finite dimensional~$\mQ$-vector space
with basis consisting of the classes $\tr_C^G(1)$ as $C$ runs through 
the conjugacy classes of connected subgroups of~$G$ with finite Weyl group.
Unfortunately, the author does not know an example when the 
torsion subgroup of $\pi_0^G(Sp^\infty)$ is non-trivial.

\begin{eg}\label{eq-SU(2)}
If every subgroup~$H$ with finite Weyl group
also has finite index in~$G$, then $I_\infty(G)=I(G)$ and~$\pi_0^G(Sp^\infty)$
is free of rank~1. This holds, for example, when $G$ is finite or a torus. 

An example for which  $\pi_0^G( Sp^{\infty})$ has rank bigger than~1 is $G=S U(2)$.
Here there are three conjugacy classes of connected subgroups:
the trivial subgroup, the conjugacy class of the maximal tori
and the full group $S U(2)$. 
Among these, the maximal tori and $S U(2)$ have finite Weyl groups,
so the classes~$1$ and~$\tr_N^{S U(2)}(1)$ are a $\mZ$-basis for
$\pi_0^{S U(2)}( Sp^\infty )$ modulo torsion, where $N$ is a
maximal torus normalizer.
\end{eg}

\end{document}